\newcommand{\supp}{\mathrm{supp}\hspace{0.5mm}}
\newcommand{\rn}{(\mathbb{R}^n)}
\newcommand{\IF}{\textrm{if}\quad}
\newcommand{\ow}{\textrm{otherwise}}
\newtheorem{theorem}{Theorem}[section]
\newtheorem{lemma}{Lemma}[section]
\newtheorem{proposition}{Proposition}[section]
\newtheorem*{theoremA}{Theorem A}
\theoremstyle{definition}
\newtheorem{remark}{Remark}[section]
\def\R{{\mathbb R}}
\def\C{{\mathbb C}}
\def\L2tx{{L^2(\R_t\times\R^n_x)}}
\def\n#1{{\left\|{#1}\right\|}}
\def\supp{\operatorname{supp}}
\begin{document}

\title[Nonlinear operations on a class of modulation spaces]
{Nonlinear operations on a class of modulation spaces}

\author[T. Kato]{Tomoya Kato}
\author[M. Sugimoto]{Mitsuru Sugimoto}
\author[N. Tomita]{Naohito Tomita}

\address[T. Kato and N. Tomita]{Department of Mathematics, Graduate School of Science, Osaka University, Toyonaka, Osaka 560-0043, Japan}
\address[M. Sugimoto]{Graduate School of Mathematics, Nagoya University Furocho, Chikusa-ku, Nagoya 464-8602, Japan}
\email[T. Kato]{t.katou@cr.math.sci.osaka-u.ac.jp}
\email[M. Sugimoto]{sugimoto@math.nagoya-u.ac.jp}
\email[N. Tomita]{tomita@math.sci.osaka-u.ac.jp}

\keywords{modulation spaces, paradifferential operators}
\subjclass[2010]{42B35, 35S50}

\begin{abstract}
We discuss when the nonlinear operation $f\mapsto F(f)$
maps the modulation space $M^{p,q}_s(\R^n)$ ($1 \leq p,q \leq \infty$)
to the same space again.
It is known that $M^{p,q}_s(\R^n)$ is a multiplication algebra
when $s > n-n/q$, hence it is true for this space if $F$ is entire.
We claim that it is still true for non-analytic $F$ when $q\geq4/3$.
\end{abstract}

\maketitle

\section{Introduction} \label{sec1}

We discuss nonlinear operations $f\mapsto F(f)$, that is, the  
composition of functions $F$ and $f$.
Let $X$ be a function space.
Then does the nonlinear operation map $X$ to the same space $X$?
For the simplest case $F(z)=z^2$, that is, $F(f)=f^2$,
the answer is yes when $X$ is a multiplication algebra.
From this observation, we immediately obtain the affirmative
answer to this question for any entire functions $F(z)$ and 
multiplication algebras $X$.
The typical examples of multiplication algebras are
$L^p$-Sobolev spaces $H^p_s(\R^n)$ ($1<p<\infty$) with $s>n/p$
and Besov spaces $B^{p,q}_s(\R^n)$ ($1\leq p,q\leq\infty$) with $s>n/p$
(see Propositions \ref{alge sob} and \ref{alge bes}).

When $F$ fails to satisfy the analyticity, answering this question
is not so straightforward. 
We however have an affirmative answer by virtue of the theory of
paradifferential operators introduced by Bony \cite{Bo} and developed
by Meyer \cite{meyer 1981}.
The main argument is to write the composition $F(f)$ 
in the form of a linear operation
\[
F(f)=M_{F,f}(x,D)f
\]
(assuming that $F(0)=0$, $f\in H^p_{s}(\R^n)$ is real-valued, and $s>n/p$ to be embedded
in $L^\infty(\R^n)$),
where $M_{F,f}(x,D)$ is a
pseudo-differential operator 
of the H\"ormander class $S^0_{1,1}$.
Since pseudo-differential operators of this class are $H_s^p$-bounded for
$s>0$, we get the following result:

\begin{theoremA}
[{\cite[Theorem 1]{meyer 1981}}]
Let $1 < p < \infty$ and $s > n/p$. 
Assume that $f:\mathbb{R}^n \to \mathbb{R}$, $f \in H^p_s (\mathbb{R}^n) $, 
$F \in C^\infty (\mathbb{R})$ and $F(0) = 0$. 
Then, we have $F(f) \in H^p_s (\mathbb{R}^n) $.
\end{theoremA}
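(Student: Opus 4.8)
The plan is to realize $F(f)$ as $M_{F,f}(x,D)f$ for a single symbol of class $S^0_{1,1}$, as indicated above, and then to quote the $H^p_s$-boundedness of such operators. First I fix a Littlewood--Paley setup: choose $\va\in C_c^\infty\rn$ with $\va\equiv1$ on $\abs{\xi}\le1$ and $\supp\va\subset\b{\abs{\xi}\le2}$, put $S_j=\va(2^{-j}D)$ and $\Delta_j=S_{j+1}-S_j$ for $j\ge0$. Writing $\psi_j(\xi)=\va(2^{-j-1}\xi)-\va(2^{-j}\xi)$, we have $\Delta_j=\psi_j(D)$ with $\supp\psi_j\subset\b{2^{j}\le\abs{\xi}\le2^{j+2}}$, together with $S_{j+1}-S_j=\Delta_j$ and $S_jf\to f$. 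Since $s>n/p$ yields the embedding $H^p_s\rn\hookrightarrow L^\infty\rn$, the function $f$ is bounded; because the $S_j$ and $\Delta_j$ are uniformly bounded on $L^\infty\rn$, all the functions $S_jf+t\Delta_jf$ take values in a single compact set $K\subset\R$, independent of $j$, of $t\in[0,1]$, and of $x$.

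I then form the telescoping identity. Using $F(0)=0$,
\[
F(f)=F(S_0f)+\sum_{j\ge0}\p{F(S_{j+1}f)-F(S_jf)},
\]
and since $S_{j+1}f-S_jf=\Delta_jf$ the fundamental theorem of calculus gives
\[
F(S_{j+1}f)-F(S_jf)=m_j\,\Delta_jf,
\qquad
m_j(x)=\int_0^1 F'\p{S_jf(x)+t\,\Delta_jf(x)}\,dt.
\]
Hence $F(f)=F(S_0f)+M_{F,f}(x,D)f$, where $M_{F,f}(x,D)=\sum_{j\ge0}m_j\,\Delta_j$ has symbol $M_{F,f}(x,\xi)=\sum_{j\ge0}m_j(x)\,\psi_j(\xi)$. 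The low-frequency remainder $F(S_0f)$ equals $S_0f\cdot\int_0^1F'(tS_0f)\,dt$ (again using $F(0)=0$): it is the product of $S_0f\in H^p_s\rn$ with a bounded smooth factor having bounded derivatives, hence lies in $H^p_s\rn$ and may be set aside.

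The heart of the proof is the symbol estimate
\[
\abs{\partial_\xi^\alpha\partial_x^\beta M_{F,f}(x,\xi)}
\le C_{\alpha,\beta}\,\jp{\xi}^{-\abs{\alpha}+\abs{\beta}},
\]
which is exactly the statement $M_{F,f}\in S^0_{1,1}$. On $\supp\psi_j$ we have $\jp{\xi}\sim2^j$, and each $\xi$-derivative falls on $\psi_j$, producing a factor $2^{-j}\sim\jp{\xi}^{-1}$ and accounting for $\jp{\xi}^{-\abs{\alpha}}$; note also that for each fixed $\xi$ only finitely many $j$ contribute to the sum. The power $\jp{\xi}^{\abs{\beta}}$ comes from the $x$-derivatives: expanding $\partial_x^\beta m_j$ by the Fa\`a di Bruno formula writes it as a finite sum of products of factors $F^{(k)}\p{S_jf+t\Delta_jf}$, bounded since $F^{(k)}$ is bounded on $K$, times spatial derivatives of $S_jf$ and $\Delta_jf$; as these blocks are frequency-localized to $\abs{\xi}\lesssim2^j$, Bernstein's inequality bounds a derivative of order $\ell$ by $C\,2^{j\ell}\n{f}_{L^\infty}$. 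Tracking orders, every term in $\partial_x^\beta m_j$ carries exactly $2^{j\abs{\beta}}\sim\jp{\xi}^{\abs{\beta}}$, which gives the claim. I expect this uniform bookkeeping --- checking that no term of the Fa\`a di Bruno expansion exceeds the allotted power of $2^j$, uniformly in $j$ --- to be the main technical obstacle.

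With $M_{F,f}\in S^0_{1,1}$ established, I would conclude by invoking the boundedness of operators of this class on $H^p_s\rn$ for $s>0$ --- the subtle point being that this ``forbidden'' class, although not $L^2$-bounded in general, is bounded on Sobolev spaces of positive order. This yields $M_{F,f}(x,D)f\in H^p_s\rn$, and combined with the remainder $F(S_0f)\in H^p_s\rn$ gives $F(f)\in H^p_s\rn$, as desired.
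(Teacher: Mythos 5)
Your proposal is correct and follows exactly the route the paper itself indicates for Theorem A (which it attributes to Meyer and only sketches in the introduction): the Littlewood--Paley telescoping $F(f)=F(S_0f)+\sum_j m_j\,\Delta_j f$ with $m_j=\int_0^1 F'(S_jf+t\Delta_jf)\,dt$, the verification via Bernstein and Fa\`a di Bruno that the symbol $\sum_j m_j(x)\psi_j(\xi)$ lies in $S^0_{1,1}$, and the $H^p_s$-boundedness of such operators for $s>0$. The bookkeeping you flag as the main obstacle works out precisely as you describe, so no gap remains.
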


\begin{remark} 
\label{explicit meyer}
We can state Theorem A in an explicit form (see, e.g., Taylor \cite[Section 3.1]{taylor 1991}):
\begin{align*}
\left\| F(f) \right\|_{H^p_s}
\leq
C
\| F^\prime \|_{C^{[s]+1}(\Omega)}
\left(
1
+
\left\| f \right\|_{L^\infty}^{[s]+1}
\right)
\left\| f \right\|_{H^p_s}
,
\end{align*}
where
$\Omega = \{ t : |t| \leq C^\prime \| f \|_{L^\infty} \}$,
and the constants $C$ and $C^\prime$ are universal.
\end{remark}

By the same argument, we have a similar conclusion for Besov spaces
(Runst \cite{runst 1985}),
and a result for complex-valued functions
$f:\R^n\to\C$ can be also stated considering the nonlinear operation
$
f\mapsto F(\operatorname{Re} f, \operatorname{Im} f)
$
with two-variable functions $F(s,t)$,
although in this paper we only consider real-valued functions $f:\R^n\to\R$
just for the sake of simplicity.

The objective of this paper is to establish a similar result
for modulation spaces $M^{p,q}_s(\R^n)$.
Modulation spaces are relatively new function spaces
introduced by Feichtinger \cite{feichtinger 1983} in 1980's 
to measure the decaying and regularity properties
of a function or distribution in a
way different from $L^p$-Sobolev spaces or Besov spaces.
The main idea of modulation spaces is to
consider the space variable and the
variable of its Fourier transform simultaneously,
while they are treated independently in $L^p$-Sobolev spaces and Besov spaces.
Because of their nature, modulation spaces have several significant properties.
For example, the Sch\"odinger propagator $e^{it|D|^2}$ and the
wave propagator $e^{it|D|}$ map the modulation space $M_s^{p,q}$ to
the same space (B\'{e}nyi-Gr\"{o}chenig-Okoudjou-Rogers \cite{BGOR}),
which means, we have no loss of regularity when we work on modulation spaces,
while it is not true for
$L^p$-Sobolev spaces $H^p_s$ or Besov spaces $B_s^{p,q}$
(Miyachi \cite{miyachi 1981}).
When we try to utilize this advantage for nonlinear analysis,
it is indispensable to ask whether
the nonlinear operation also maps $M_s^{p,q}$ to itself.

We know that modulation spaces
$M^{p,q}_s(\R^n)$ ($1 \leq p,q \leq \infty$)
with $s > n/q^\prime$ (with $s=0$ when $q=1$)
are multiplication algebras, where $1/q+1/q'=1$
(Proposition \ref{alge mod}), hence
nonlinear operation $f\mapsto F(f)$
maps these spaces to themselves when $F(z)$ is entire.
Then it is natural to expect the same conclusion for non-analytic $F$
as is the case for $L^p$-Sobolev spaces and Besov spaces.
Unfortunately, it is not obvious because the argument of paradifferential
operators does not work in this case
because pseudo-differential operators
of class $S^0_{1,\delta}$ with $\delta>0$ have exotic mapping property
and are not $M_s^{p,q}$-bounded (see \cite{sugimoto tomita 2008}).
Furthermore, if $F(z)$ is not necessarily analytic, 
a negative answer for $M_0^{p,1}$ is known.
In fact, Bhimani-Ratnakumar \cite{bhimani ratnakumar 2016}
established
that the nonlinear operation
$
f\mapsto F(\operatorname{Re} f, \operatorname{Im} f)
$
is a mapping on $M_0^{1,1}(\R^n)$ if and only
 if $F$ is real analytic and $F(0,0)=0$,
and Kobayashi-Sato \cite{kobayashi sato 2017} generalized 
this result to the case $M_0^{p,1}$ with $1\leq p<\infty$
although it is restricted to the case when $n=1$.
On the other hand, it is still possible for general
$M^{p,q}_s(\R^n)$ with $1<q\leq\infty$ and $s>n/q'$ when $F$ is not analytic.
Our main result states that it is affirmative for $q$ in a range away from
$q=1$:

\begin{theorem}
\label{thm on mod}
Let $1 \leq p < \infty$, $4/3 \leq q < \infty$ (or $p = q = \infty$) and $s > n/q^\prime$.
Assume that $f:\mathbb{R}^n \to \mathbb{R}$,
$f \in M^{p,q}_s (\mathbb{R}^n) $, 
$F \in C^\infty (\mathbb{R})$ and $F(0) = 0$. 
Then, we have $F(f) \in M^{p,q}_s (\mathbb{R}^n) $.
\end{theorem}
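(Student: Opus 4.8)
The plan is to represent the composition through the Fourier transform of $F$ and to reduce the whole statement to a single quantitative estimate for the modulation-space norm of the phase $e^{i\tau f}$, with polynomial (rather than exponential) dependence on $\tau$. First I would use that $s > n/q'$ forces $M^{p,q}_s(\R) \hookrightarrow M^{\infty,1}(\R^n) \hookrightarrow L^\infty(\R^n)$ (the same Hölder/Bernstein computation that underlies Proposition \ref{alge mod}), so $f$ takes values in a fixed compact interval $I = [-\|f\|_{L^\infty},\,\|f\|_{L^\infty}]$. Since $F(f)$ depends only on the values of $F$ on $I$, I may replace $F$ by a function in $\mathcal S(\R)$ agreeing with it on $I$ and still vanishing at $0$; the final constant will then depend only on finitely many derivatives of $F$ on $I$, exactly as in Remark \ref{explicit meyer}. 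For such $F$, Fourier inversion together with $\int_{\R}\widehat F(\tau)\,d\tau = F(0) = 0$ gives
\[
F(f(x)) = c\int_{\R} \widehat F(\tau)\,\bigl(e^{i\tau f(x)}-1\bigr)\,d\tau ,
\]
where subtracting the constant makes each integrand decay where $f$ decays and lie in $M^{p,q}_s$.

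Everything then hinges on a bound of the form
\[
\bigl\| e^{i\tau f}-1\bigr\|_{M^{p,q}_s} \le C\,\langle\tau\rangle^{N}\,\bigl(1+\|f\|_{L^\infty}\bigr)^{M}\,\|f\|_{M^{p,q}_s},
\qquad \tau\in\R,
\]
with $N$ and $M$ finite and independent of $\tau$, and I expect this to be the main obstacle. The naive route—expanding $e^{i\tau f}-1=\sum_{m\ge 1}(i\tau f)^m/m!$ and using only that $M^{p,q}_s$ is a multiplication algebra (Proposition \ref{alge mod})—controls $\|f^m\|$ by $C^{m-1}\|f\|^m$ and hence yields only $\|e^{i\tau f}-1\| \lesssim e^{C|\tau|\,\|f\|}-1$. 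This exponential growth is fatal, because $\widehat F$ decays rapidly but not exponentially. The point is therefore to replace the crude algebra estimate by a Moser/Leibniz-type estimate in which the non-decaying phase $e^{i\tau f}$ is always measured in $L^\infty$ (where $\|e^{i\tau f}\|_{L^\infty}=1$ costs no power of $\tau$), while the genuine $M^{p,q}_s$-regularity is always spent on a bare factor of $f$. Such an estimate produces only finitely many powers of $\tau$, coming from the at most $\sim s$ derivatives that the chain rule can throw onto the phase.

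To establish that refined estimate I would localize in frequency with the unit-cube operators $\square_k$ defining the modulation norm, turning products into discrete convolutions on the lattice $\Z^n$, and then separate the contribution carrying the regularity on $f$ from those carried by the bounded phase. For integer $s$ this can be organized through a Faà di Bruno expansion of $\square_k\partial^\alpha(e^{i\tau f})$, each resulting product being estimated by Hölder on each cube together with a weighted discrete Young inequality on $\Z^n$ (the same inequality that yields $\ell^q_s\hookrightarrow\ell^1$ for $s>n/q'$ in Proposition \ref{alge mod}); non-integer $s$ is then recovered by interpolation or by a difference characterization of $M^{p,q}_s$. I expect the hypothesis $4/3\le q$, i.e.\ $q'\le 4$, to enter precisely at the convolution/orthogonality step: summing the frequency interactions forces one to combine the defining $\ell^q$ summation with the $L^2=M^{2,2}$ (Plancherel) orthogonality on the cubes, and the resulting Hausdorff--Young / Young balance closes exactly when $q'\le 4$. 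This is presumably the sharp point of the present method, consistent with the fact that $q=4/3$ is advertised only as a range "away from $q=1$'' rather than as optimal.

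Finally, granting the phase estimate, I would substitute it into the integral representation. Since $\widehat F\in\mathcal S(\R)$, the weight $\langle\tau\rangle^{N}$ is absorbed and $\int_{\R}|\widehat F(\tau)|\,\langle\tau\rangle^{N}\,d\tau<\infty$, so by Minkowski's integral inequality
\[
\|F(f)\|_{M^{p,q}_s}
\le C\Big(\int_{\R}|\widehat F(\tau)|\,\langle\tau\rangle^{N}\,d\tau\Big)\bigl(1+\|f\|_{L^\infty}\bigr)^{M}\,\|f\|_{M^{p,q}_s}<\infty,
\]
which gives $F(f)\in M^{p,q}_s(\R^n)$ and, as a byproduct, an explicit bound analogous to Remark \ref{explicit meyer}. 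The endpoint case $p=q=\infty$ is handled identically, since both the embedding into $L^\infty$ and the algebra structure persist there.
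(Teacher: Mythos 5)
Your overall strategy---representing $F$ by Fourier inversion and reducing everything to a polynomial-in-$\tau$ bound for $\left\| e^{i\tau f}-1\right\|_{M^{p,q}_s}$---is a legitimate route and genuinely different from the paper's (the paper runs Meyer's paradifferential decomposition $G(f)=G(S_0f)+\sum_j m_j\,\Delta_j f$ in Fourier--Lebesgue spaces, using Hahn's multiplier theorem $\|m(D)g\|_{L^q}\lesssim\|m\|_{H^p_s}\|g\|_{L^q}$ as the substitute for $S^0_{1,1}$-boundedness, and then lifts to $M^{p,q}_s$ via the local equivalence with $\mathcal{F}L^q_s$). However, your proof has a genuine gap at exactly the point you flag as ``the main obstacle'': the phase estimate is asserted, not proved, and the sketch you give for it does not close. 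The mechanism you propose---Fa\`a di Bruno on $\partial^\alpha(e^{i\tau f})$, always measuring the factor $e^{i\tau f}$ in $L^\infty$ so that it ``costs no power of $\tau$''---is exactly the Moser-type argument that works for $H^p_s$ and $B^{p,q}_s$, but it breaks down for modulation spaces: pointwise multiplication by a unimodular function is an isometry on $L^p$, yet it is \emph{not} bounded on $M^{p,q}_0$ uniformly in the multiplying function, because the phase redistributes frequencies across the unit cubes underlying the modulation norm; you cannot discard it by H\"older before summing the lattice convolution. To control a term like $e^{i\tau f}\prod_j\partial^{\alpha_j}f$ in $M^{p,q}_0$ one is forced to measure the phase itself in a modulation-type norm (e.g.\ via $\|uv\|_{M^{p,q}_0}\lesssim\|u\|_{M^{\infty,1}_0}\|v\|_{M^{p,q}_0}$, which requires $\|e^{i\tau f}\|_{M^{\infty,1}_0}$), and the growth of that quantity in $\tau$ is precisely the question you set out to answer---the argument is circular.

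That this gap is substantive, not cosmetic, is shown by the case $q=1$: by Bhimani--Ratnakumar, only analytic $F$ operate on $M^{p,1}_0$, so a polynomial-in-$\tau$ phase bound (which, fed into your integral representation, would yield non-analytic superposition there as well) must \emph{fail} for $q=1$; hence any correct proof of your key estimate has to use $q>1$ (and presumably $q\ge 4/3$) in an essential, identifiable way. Your suggestion that a ``Hausdorff--Young / Young balance closes exactly when $q'\le 4$'' is only a guess at that mechanism: in the paper the threshold comes from Hahn's theorem, whose admissible range $2p/(p+2)\le q$ with $p=q'$ is exactly $q\ge 4/3$, and nothing in your sketch reproduces such a step. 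As written, the proposal reduces the theorem to an unproved lemma that is essentially as deep as the theorem itself.
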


We remark that the condition $s>n/q'$ (with $s=0$ when $q=1$) is necessary for modulation spaces
$M_s^{p,q}(\R^n)$ to be multiplication algebras (Guo-Fan-Wu-Zhao \cite{guo fan wu zhao 2017}).
See also Appendix B.
We also remark that 
Theorem \ref{thm on mod} is reduced to the following result
due to the local equivalence between
the modulation spaces $M^{p,q}_s$ and the Fourier Lebesgue spaces
$\mathcal FL^q_s$:

\begin{theorem}
\label{thm on fou}
Let $4/3 \leq q \leq \infty$ and $s > n/q^\prime$. 
Assume that $f:\mathbb{R}^n \to \mathbb{R}$,
 $f \in \mathcal{F} L^q_s (\mathbb{R}^n) $, 
$F \in C^\infty (\mathbb{R})$ and $F(0) = 0$. 
Then, we have $F(f) \in \mathcal{F} L^q_s (\mathbb{R}^n) $.
\end{theorem}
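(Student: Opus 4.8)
The plan is to combine an inhomogeneous Littlewood--Paley decomposition with the Bony--Meyer paralinearization, but to run every estimate directly in the Fourier--Lebesgue norm rather than through the $S^0_{1,1}$ calculus, which (as stressed in the introduction) is unavailable on these spaces. First I would record that $s>n/q'$ forces the continuous embeddings $\mathcal FL^q_s\hookrightarrow\mathcal FL^1\hookrightarrow L^\infty$, so that $f$ is bounded; writing $R=\n{f}_{L^\infty}$ and modifying $F$ outside $\b{|t|\le C R}$, I may assume $F\in C_c^\infty(\R)$ with all derivatives controlled by the $C^N(\Omega)$-quantities of Remark~\ref{explicit meyer}. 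I would then exploit the dyadic characterization
\[
\n{g}_{\mathcal FL^q_s}\approx\Big(\sum_{j\ge0}2^{jsq}\,\n{\Delta_j g}_{\mathcal FL^q}^q\Big)^{1/q},
\]
where $\b{\Delta_j}$ is a Littlewood--Paley partition and $S_j=\sum_{k\le j}\Delta_k$, which turns the problem into a summability statement over dyadic frequency blocks.

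Using $F(0)=0$ and $S_jf\to f$, I would telescope
\[
F(f)=\sum_{j\ge0}\b{F(S_jf)-F(S_{j-1}f)}=\sum_{j\ge0}M_j\,\Delta_jf,\qquad M_j=\int_0^1 F'\big(S_{j-1}f+t\,\Delta_jf\big)\,dt,
\]
with the convention $S_{-1}f=0$, so that $F(S_{-1}f)=F(0)=0$ and $S_jf-S_{j-1}f=\Delta_jf$. For each $j$ I would split the symbol as $M_j=S_jM_j+(I-S_j)M_j$: the low part keeps $(S_jM_j)\Delta_jf$ essentially localized at frequency $2^j$ and produces the \emph{main paraproduct term}, while the high part $(I-S_j)M_j$ encodes exactly the breaking of band-limitedness caused by the nonlinearity and produces a \emph{remainder}.

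For the main term, each summand $(S_jM_j)\Delta_jf$ has Fourier support in an annulus $\abs{\xi}\sim2^j$, so the blocks overlap boundedly in frequency and the characterization above reduces the $\mathcal FL^q_s$-norm of $\sum_j(S_jM_j)\Delta_jf$ to $\sup_j\n{S_jM_j}_{L^\infty}\cdot\n{f}_{\mathcal FL^q_s}$, with $\n{S_jM_j}_{L^\infty}\lesssim\n{F'}_{L^\infty(\Omega)}$; this is where the algebra property for $\mathcal FL^q_s$ (the Fourier--Lebesgue analogue of Proposition~\ref{alge mod}) is used. The remainder $\sum_j(I-S_j)M_j\,\Delta_jf$ is the heart of the matter. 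Here $(I-S_j)M_j$ is no longer band-limited, and I would have to quantify the high-frequency decay of $F'$ applied to the band-limited function $S_{j-1}f+t\Delta_jf$ (through the Besov form of Meyer's estimate, Remark~\ref{explicit meyer}) and then bound the Fourier convolution $\widehat{(I-S_j)M_j}\ast\widehat{\Delta_jf}$ in the weighted $L^q$ norm. Reassembling these high--high frequency interactions as an off-diagonal double series and summing it in $\ell^q$ is what forces a Young/Hausdorff--Young estimate whose exponents close precisely at the threshold $q'\le4$, that is $q\ge4/3$.

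I expect this remainder estimate to be the main obstacle, for two reasons. First, it is exactly the step at which the paradifferential philosophy fails on these spaces: the nonlinear symbol $M_j$ carries a full high-frequency tail that no $S^0_{1,\delta}$ boundedness can tame, so the decay must be extracted by hand from the smoothness of $F$ and the band-limitedness of $S_{j-1}f$, with the roughness of $f$ (only $s>n/q'$) limiting how many derivatives one may spend. Second, the numerology $q\ge4/3$ is forced at this step and nowhere else, so the whole theorem hinges on making the remainder series converge at the endpoint; the fractional-$s$ bookkeeping, the weight $\jp{\xi}^s$, and the interplay between the summation index $q$ and the convolution exponents are where the genuine work lies.
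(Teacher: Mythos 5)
Your skeleton (telescoping $F(f)=\sum_j M_j\,\Delta_jf$ and splitting each symbol $M_j$ into a low- and a high-frequency part) is the same as the paper's, but your estimate of the main term rests on a false principle, and this is precisely the step where the difficulty of the theorem is concentrated. You claim that, because the blocks $(S_jM_j)\,\Delta_jf$ are frequency-localized, the $\mathcal{F}L^q_s$-norm of their sum reduces to $\sup_j\|S_jM_j\|_{L^\infty}\cdot\|f\|_{\mathcal{F}L^q_s}$. Multiplication by a bounded (even band-limited) function is \emph{not} bounded on $\mathcal{F}L^q$ unless $q=2$: on the Fourier side the block is the convolution $\mathcal{F}[S_jM_j]\ast(\psi_j\widehat f)$, and an $L^\infty$ bound on $S_jM_j$ gives no control of this convolution in $L^q$. (Also, $(S_jM_j)\,\Delta_jf$ has spectrum in a ball $|\xi|\lesssim 2^j$ rather than an annulus, since $S_jM_j$ carries frequencies comparable to those of $\Delta_jf$; that part is fixable.) What is actually needed, and what the paper uses, is a genuine Fourier multiplier theorem: writing $\mathcal{F}[q_j\cdot\Delta_jf]=\widetilde{q_j}(D)[\psi_j\widehat f]$ with $\widetilde{q_j}(x)=q_j(-x)$, the paper invokes Hahn's theorem (Proposition \ref{hahn}), $\|m(D)h\|_{L^q}\lesssim\|m\|_{H^p_s}\|h\|_{L^q}$ for $p\ge 2$, $s>n/p$, $2p/(p+2)\le q\le 2p/(p-2)$, applied with $p=q'$ when $q\le 2$ and $p=2$ when $q>2$, together with the uniform Sobolev bounds $\|q_j\|_{H^{q'}_s}\lesssim 1$ (resp. $\|q_j\|_{H^2_{\widetilde s}}\lesssim 1$) supplied by Meyer's Theorem A. The restriction $4/3\le q$ is exactly the compatibility condition $q'\ge 2$ and $2q'/(q'+2)\le q$ in Hahn's theorem; it is therefore forced already in the main term (and again in the remainder and in the $G(S_0f)$ term), contrary to your assertion that the numerology appears only when summing the remainder series.

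A second gap: you never subtract the Taylor polynomial of $F$ at the origin. Your remainder estimate must control Sobolev norms of $F^{(k)}\bigl(S_{j-1}f+t\,\Delta_jf\bigr)$ for $k$ up to roughly $[s]+2$ (this is where the decay in $m$ of order $2^{-m([s]+1)}$ comes from), but since $f(x)\to 0$ at spatial infinity while $F^{(k)}(0)\neq 0$ in general, these compositions tend to the nonzero constant $F^{(k)}(0)$ at infinity and hence do not belong to $H^{q'}_s$ or $H^2_{\widetilde s}$ at all; truncating $F$ in the range variable to make it compactly supported does nothing to cure this. The paper's reduction (Proposition \ref{thm on fou reduction}) handles it by writing $F=G+P$ with $P$ the Taylor polynomial of degree $[s]+2$, disposing of $P(f)$ by the algebra property (Proposition \ref{alge fou}), and running the paralinearization only for $G$, whose derivatives vanish at $0$ to the order needed so that Theorem A applies to each $G^{(k)}$. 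Without this reduction, and without a multiplier theorem replacing your $L^\infty$ bound, the argument does not close.
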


Finally, we just refer to the work by Reich-Reissig-Sickel
\cite{reich reissig Sickel 2016} which also discusses the non-analytic
nonlinear operations, but on modulation spaces with quasi-analytic
regularity.

We explain the organization of this note.
In Section \ref{sec2}, we introduce basic notations of function spaces
and their properties which are used in this paper.
In Section \ref{sec3}, we list examples of multiplication algebras
as a starting point of our argument.
In Section \ref{sec4}, we prove the theorem of nonlinear operation
on Fourier Lebesgue spaces (Theorem \ref{thm on fou}).
In Section \ref{sec5}, we lift it to the case of modulation spaces
(Theorem \ref{thm on mod}) by using the local equivalence between
Fourier Lebesgue spaces and modulation spaces.
This equivalence is a well known fact but the proof is given
in Appendix A for the sake of self-containedness.
In Appendix B, necessity for Fourier Lebesgue spaces and modulation spaces
to be multiplication algebras is considered.

\section{Preliminaries}\label{sec2}

\subsection{Basic notations} \label{sec21}

We collect notations which will be used throughout this paper.
We denote by $\mathbb{R}$, $\mathbb{Z}$ and $\mathbb{Z}_+$
the sets of reals, integers and non-negative integers, respectively. 
The notation $a \lesssim b$ means $a \leq C b$ with a constant $C > 0$ 
which may be different in each occasion, 
and $a \sim b $ means $a \lesssim b$ and $b \lesssim a$. 
For $1 \leq p \leq \infty$, $p^\prime$ is the dual number of $p$ and satisfies that $1/p + 1/p^\prime =1$.
We write $\langle x \rangle = (1 + | x |^2)^{1/2}$ for $x \in \mathbb{R}^n$ 
and $[s] = \max\{ n \in \mathbb{Z} : n \leq s \}$ for $s \in \mathbb{R}$.

We denote the Schwartz space of rapidly decreasing smooth functions
on $\mathbb{R}^n$ by $\mathcal{S} = \mathcal{S} (\mathbb{R}^n)$ and its dual,
the space of tempered distributions, by $\mathcal{S}^\prime = \mathcal{S}^\prime(\mathbb{R}^n)$. 
The Fourier transform and the inverse Fourier transform of $f \in \mathcal{S}(\mathbb{R}^n)$ are given by
\begin{equation*}
\mathcal{F} f  (\xi) = \widehat {f} (\xi) = \int_{\mathbb{R}^n}  e^{-i \xi \cdot x } f(x) d x
\quad\textrm{and}\quad
\mathcal{F}^{-1} f (x) = \check f (x)= \frac{1}{(2\pi)^n} \int_{\mathbb{R}^n}  e^{i x \cdot \xi } f( \xi ) d\xi,
\end{equation*}
respectively.
For $m \in \mathcal{S}^\prime (\mathbb{R}^n)$, 
the Fourier multiplier operator is given by
\begin{equation*}
m(D) f 
=
\mathcal{F}^{-1} \left[m \cdot \mathcal{F} f \right]
=
\left( \mathcal{F}^{-1} m \right) \ast f,
\end{equation*}
and for $s \in \mathbb{R}$ the Bessel potential by
$(I - \Delta )^{s/2} f = \mathcal{F}^{-1} [ \langle \cdot \rangle^s \cdot \mathcal{F}f ]$ 
for $f \in \mathcal{S}(\mathbb{R}^n)$.

We will use some function spaces.
The space of smooth functions with compact support on $\mathbb{R}^n$ 
is denoted by $C^\infty_0 = C^\infty_0 (\mathbb{R}^n)$.
The Lebesgue space $L^p = L^p(\mathbb{R}^n) $ is equipped with the norm 
\begin{equation*}
\| f \|_{L^p} = \left( \int_{\mathbb{R}^n} \big| f(x) \big|^p dx \right)^{1/p} 
\end{equation*}
for $1 \leq p < \infty$. If $p = \infty$, $\| f \|_{L^\infty} = \textrm{ess}\sup_{x\in\mathbb{R}^n} |f(x)|$. 
Moreover, we denote the $L^p$-Sobolev space $H^p_s$
by 
$H^p_s (\mathbb{R}^n) 
= \{ f \in \mathcal{S}^\prime (\mathbb{R}^n) 
: \| f \|_{H^p_s} = \| (I-\Delta)^{s/2} f \|_{L^p} < \infty \}$
for $1 < p < \infty$ and $s\in\mathbb{R}$,
and the (weighted) Fourier Lebesgue space $\mathcal{F} L^p_s$ by
$\mathcal{F} L^p_s (\mathbb{R}^n) 
= \{ f \in \mathcal{S}^\prime (\mathbb{R}^n) 
: \left\| f \right\|_{\mathcal{F} L^p_s} = \| \langle \cdot \rangle^s \widehat f \|_{ L^p } < \infty \}$
for $1 \leq p \leq \infty$ and $s\in\mathbb{R}$.
We remark that $H_s^2 = \mathcal{F} L^2_s$.
Moreover, by the H\"older inequality, we have $\mathcal{F}L^q_s \rn \hookrightarrow H^2_{\widetilde s} \rn$
if $2 < q \leq \infty$ and $n (1/2 -1/q) < s - \widetilde s$. 
Note that the second condition is equivalent to $\widetilde s < n/2 + (s - n/q^\prime)$.
From this relation, we immediately see the following.
\begin{proposition}
\label{fou to sob}
Let $2 < q \leq \infty$, $s > n/q^\prime$ and $ n/2 < \widetilde s < n/2 + (s - n/q^\prime)$. 
Then, we have 
\begin{equation*}
\mathcal{F}L^q_s \rn \hookrightarrow H^2_{\widetilde s} \rn \hookrightarrow L^\infty \rn.
\end{equation*}
\end{proposition}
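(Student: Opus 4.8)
The plan is to obtain the asserted chain by composing two elementary embeddings, each of which reduces to an application of Hölder's inequality on the Fourier side; indeed the first of the two is precisely the observation recorded in the paragraph preceding the statement.

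First I would treat the embedding $\mathcal{F}L^q_s\rn \hookrightarrow H^2_{\widetilde s}\rn$. Recalling that $H^2_{\widetilde s}=\mathcal{F}L^2_{\widetilde s}$, it suffices to bound $\|\langle\cdot\rangle^{\widetilde s}\widehat f\|_{L^2}$ by $\|\langle\cdot\rangle^{s}\widehat f\|_{L^q}$. Writing $\langle\xi\rangle^{\widetilde s}\widehat f(\xi)=\langle\xi\rangle^{\widetilde s-s}\cdot\langle\xi\rangle^{s}\widehat f(\xi)$ and applying Hölder's inequality with the exponent $r$ determined by $1/2=1/q+1/r$ (so $r=2q/(q-2)$, with $r=2$ when $q=\infty$), one gets
\[
\|\langle\cdot\rangle^{\widetilde s}\widehat f\|_{L^2}\le \|\langle\cdot\rangle^{\widetilde s-s}\|_{L^r}\,\|\langle\cdot\rangle^{s}\widehat f\|_{L^q}.
\]
The factor $\|\langle\cdot\rangle^{\widetilde s-s}\|_{L^r}$ is finite exactly when $(s-\widetilde s)r>n$, that is, when $n(1/2-1/q)<s-\widetilde s$; as observed in the text this is equivalent to $\widetilde s<n/2+(s-n/q')$, which holds by hypothesis.

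Next I would establish the classical Sobolev embedding $H^2_{\widetilde s}\rn\hookrightarrow L^\infty\rn$ valid for $\widetilde s>n/2$. By Fourier inversion and the triangle inequality one has $\|f\|_{L^\infty}\le (2\pi)^{-n}\|\widehat f\|_{L^1}$; splitting $\widehat f=\langle\cdot\rangle^{-\widetilde s}\cdot\langle\cdot\rangle^{\widetilde s}\widehat f$ and applying the Cauchy--Schwarz inequality yields
\[
\|f\|_{L^\infty}\le (2\pi)^{-n}\,\|\langle\cdot\rangle^{-\widetilde s}\|_{L^2}\,\|\langle\cdot\rangle^{\widetilde s}\widehat f\|_{L^2}=C\|f\|_{H^2_{\widetilde s}},
\]
where $\|\langle\cdot\rangle^{-\widetilde s}\|_{L^2}$ is finite precisely because $2\widetilde s>n$.

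Finally, chaining the two embeddings gives the claim, provided only that the admissible range for $\widetilde s$ is nonempty; this is guaranteed by the standing hypothesis $s>n/q'$, which ensures $n/2<n/2+(s-n/q')$ and hence the existence of an admissible $\widetilde s$. I do not expect any genuine obstacle here, as the statement is a routine composition of two known facts: the only point demanding care is the exponent bookkeeping, namely checking that the single inequality $n(1/2-1/q)<s-\widetilde s$ is exactly what places the power weight $\langle\cdot\rangle^{\widetilde s-s}$ in $L^r$, and confirming its stated equivalence with $\widetilde s<n/2+(s-n/q')$.
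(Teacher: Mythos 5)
Your proposal is correct and follows essentially the same route as the paper: the first embedding is exactly the Hölder-inequality observation (with the exponent condition $n(1/2-1/q)<s-\widetilde s$, equivalently $\widetilde s<n/2+(s-n/q')$) stated in the paragraph preceding the proposition, and the second is the classical Sobolev embedding $H^2_{\widetilde s}\hookrightarrow L^\infty$ for $\widetilde s>n/2$, proved in the standard way via Fourier inversion and Cauchy--Schwarz. The exponent bookkeeping and the nonemptiness of the range for $\widetilde s$ are both handled correctly.
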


For $1 \leq q \leq \infty$ and $s\in\mathbb{R}$, we denote by $\ell^q_s$ the set of all complex number sequences 
$\{ a_k \}_{ k \in \mathbb{Z}^n }$ such that
\begin{equation*}
 \| \{ a_k \}_{ k\in\mathbb{Z}^n } \|_{ \ell^q_s } = \left( \sum_{ k \in \mathbb{Z}^n } \langle k \rangle^{sq} | a_k |^q \right)^{ 1 / q } < \infty
\end{equation*}
if $q < \infty$, 
and $\| \{ a_k \}_{ k\in\mathbb{Z}^n } \|_{ \ell^\infty_s } = \sup_{k \in \mathbb{Z}^n} \langle k \rangle^s |a_k| < \infty$ if $q = \infty$.
For the sake of simplicity, we will write $ \| a_k  \|_{ \ell^q_s } $
instead of the more correct notation $ \| \{ a_k \}_{ k\in\mathbb{Z}^n } \|_{ \ell^q_s } $.

We end this subsection by mentioning a key fact
on the boundedness of Fourier multiplier operators invented by Hahn \cite[Theorem 9]{hahn 1967}.

\begin{proposition}
\label{hahn}
Let $2 \leq p < \infty$ and $s > n/p$. 
Then, if 
$2p/(p+2) \leq q \leq 2p/(p-2)$ when $p\neq 2$ 
or $1 \leq q < \infty$ when $p = 2$, 
we have
\begin{equation*}
\left\| m ( D ) f \right\|_{ L^q } \lesssim \left\| m \right\|_{ H^p_s } \left\| f \right\|_{ L^q }
\end{equation*}
for all $m \in H^p_s ( \mathbb{R}^n )$ and all $f \in L^q ( \mathbb{R}^n )$.
\end{proposition}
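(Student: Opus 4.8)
The plan is to prove the equivalent multiplier formulation: that every $m\in H^p_s(\Rn)$ with $s>n/p$ acts as a bounded Fourier multiplier on $L^q(\Rn)$ for the stated $q$, with operator norm controlled by $\|m\|_{H^p_s}$.

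\emph{First reduction (duality and interpolation).} Since the $H^p_s$ norm is invariant under $\xi\mapsto-\xi$ and the transpose of $m(D)$ with respect to the bilinear pairing is $\widetilde m(D)$, where $\widetilde m(\xi)=m(-\xi)$, the $L^q$ bound and the $L^{q'}$ bound are equivalent. The admissible range is symmetric about $q=2$ (indeed $1/q\in[\,1/2-1/p,\ 1/2+1/p\,]$, and $2p/(p+2)$, $2p/(p-2)$ are conjugate exponents), so it suffices to treat $2\le q\le 2p/(p-2)$ and recover the rest by duality. For $q=2$ the estimate is immediate from Plancherel together with the Sobolev embedding $H^p_s\hookrightarrow L^\infty$ (valid for $s>n/p$), giving $\|m(D)f\|_{L^2}\le\|m\|_{L^\infty}\|f\|_{L^2}\lesssim\|m\|_{H^p_s}\|f\|_{L^2}$. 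By Riesz--Thorin interpolation with this $q=2$ estimate it then suffices to establish the single endpoint
\[
\|m(D)f\|_{L^{q_0}}\lesssim\|m\|_{H^p_s}\|f\|_{L^{q_0}},\qquad \frac1{q_0}=\frac12-\frac1p .
\]

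\emph{Endpoint via Littlewood--Paley.} I would decompose $m=\sum_{j\ge0}M_j$ into dyadic annular pieces $M_j(\xi)=\psi(2^{-j}\xi)m(\xi)$ supported in $|\xi|\sim 2^j$, with kernels $K_j=\mathcal F^{-1}M_j$, so that $m(D)f=\sum_j K_j*f$. Because $\widehat{K_j*f}=M_j\widehat f$ is supported in $|\xi|\sim 2^j$, the summands are essentially the Littlewood--Paley pieces of $m(D)f$, whence $\|m(D)f\|_{L^{q_0}}\sim\big\|(\sum_j|K_j*f|^2)^{1/2}\big\|_{L^{q_0}}$; since $q_0\ge2$, Minkowski's inequality reduces matters to a square-summable family of single-scale estimates. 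The natural tool for the kernels is Plancherel in weighted form, $\|\langle\cdot\rangle^\sigma K_j\|_{L^2}=\|M_j\|_{H^2_\sigma}$, and Hölder on the annulus (of measure $\sim 2^{jn}$, using $p\ge2$) converts this $L^2$-based quantity into the $L^p$-based hypothesis at the cost of a factor $2^{jn(1/2-1/p)}$. Matching the weight $\sigma$ to the available smoothness $s$ produces a compensating gain, and the resulting geometric series in $j$ converges precisely when $1/q_0\ge 1/2-1/p$ and $s>n/p$; this is what pins down the endpoint and hence the whole range.

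\emph{Main obstacle.} The heart of the matter is that $p\ge2$ forbids any direct use of the Hausdorff--Young inequality on $m$ itself: for $p\le2$ one would place $\mathcal F^{-1}[(I-\Delta)^{s/2}m]$ in $L^{p'}$ and, combined with $\langle\cdot\rangle^{-s}\in L^p$ when $s>n/p$, immediately obtain $K=\mathcal F^{-1}m\in L^1$, hence $L^q$-boundedness for \emph{every} $q$ (this is exactly what happens at $p=2$). For $p>2$ no such $L^1$ kernel bound is available under $s>n/p$ alone, which is precisely why the range of $q$ must be restricted rather than being all of $[1,\infty]$. The delicate point is therefore the sharp single-scale kernel estimate and its summation: one must extract from the $L^p$ smoothness $s>n/p$ exactly enough decay to reach the endpoint $1/q_0=1/2-1/p$ without loss, reconciling the $L^2$-natural weighted Plancherel bounds with the $L^p$ hypothesis. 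I expect this balancing, together with the verification that the borderline condition $s>n/p$ (rather than the stronger $s>n/p'$) suffices on the restricted range, to be the main technical hurdle.
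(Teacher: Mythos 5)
The paper offers no proof of this proposition to compare against: it is quoted from Hahn \cite[Theorem 9]{hahn 1967}, and only the extension to $q=\infty$, $p=2$ is argued internally (Remark \ref{rem hahn}). So your proposal must stand on its own, and it does not. Your soft reductions are all correct: the transpose of $m(D)$ is $\widetilde{m}(D)$ with $\widetilde{m}(\xi)=m(-\xi)$, the range of $q$ is self-dual since $(2p/(p+2))'=2p/(p-2)$, the case $q=2$ follows from Plancherel and $H^p_s\hookrightarrow L^\infty$ for $s>n/p$, Riesz--Thorin reduces everything to the endpoint $1/q_0=1/2-1/p$, and your observation for $p=2$ (an $L^1$ kernel via $\|\mathcal{F}^{-1}m\|_{L^1}\le\|\langle\cdot\rangle^{-s}\|_{L^2}\|m\|_{H^2_s}$) reproduces exactly the paper's Remark \ref{rem hahn}. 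But for $2<p<\infty$ the entire content of the theorem is the endpoint bound, and there your sketch breaks at the places where it would have to do real work.

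Concretely: (i) there is no ``compensating gain'' from matching the weight $\sigma$ to the smoothness $s$. The quantity $\|M_j\|_{H^2_\sigma}=\|\langle\cdot\rangle^\sigma K_j\|_{L^2}$ measures $\xi$-smoothness of $M_j$, whose dual variable is the kernel variable $x$; the support of $M_j$ in the annulus $|\xi|\sim 2^j$ says nothing about the Fourier support of $M_j$ itself, so no estimate of the form $\|M_j\|_{H^2_\sigma}\lesssim 2^{j(\sigma-s)}\|M_j\|_{H^2_s}$ is available, and the inhomogeneous weight $\langle\xi\rangle^s$ produces no negative power of $2^j$ on the annulus either. What H\"older on the annulus actually yields is $\|M_j\|_{H^2_\sigma}\lesssim 2^{jn(1/2-1/p)}\|M_j\|_{H^p_\sigma}$, a factor that \emph{grows} in $j$ with nothing to cancel it, so your ``geometric series in $j$'' has no source of decay. (ii) Even granting uniform single-scale operator bounds $c_j$, the Minkowski step needs $\sum_j c_j^2<\infty$; but the dyadic pieces $M_j=\psi(2^{-j}\cdot)m$ of an $H^p_s$ function are only $\ell^p$-summable (finite overlap gives $\sum_j\|M_j\|_{H^p_s}^p\lesssim\|m\|_{H^p_s}^p$), and $\ell^p\not\subset\ell^2$ for $p>2$. (iii) The standard mechanisms converting a weighted-$L^2$ kernel bound into an $L^{q_0}\to L^{q_0}$ bound --- H\"ormander's Cauchy--Schwarz trick, or a shell decomposition of the kernel with local Young--H\"older, which costs $\sum_k 2^{k(n/2-\sigma)}$ --- all require $\sigma>n/2$, whereas you only have $\sigma\le s$ with $s$ possibly in the interval $(n/p,\,n/2\,]$, which is precisely the regime the theorem is about. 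You flag this balancing yourself as ``the main technical hurdle,'' and that is accurate: the hurdle \emph{is} the theorem, and it is left unproved. Since the paper simply invokes Hahn's result, the honest options are to cite \cite{hahn 1967} as the paper does, or to supply a genuinely different endpoint argument rather than the Littlewood--Paley scheme as sketched.
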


\begin{remark}
\label{rem hahn}
In Proposition \ref{hahn}, we excluded $q = \infty$ for the case $p=2$. 
This comes from that $\mathcal{S}$ is not dense in $L^\infty$.
In this case, we regard $m ( D ) f $ as the convolution $(\mathcal{F}^{-1} m ) \ast f$.
Then, this is well-defined since $H^2_s \rn \hookrightarrow \mathcal{F}L^1_0 \rn$ for $s > n/2$, 
and thus Proposition \ref{hahn} holds for $q = \infty$ and $p=2$.
In fact, if $s > n/2 $,
\begin{equation*}
\left\| m ( D ) f \right\|_{ L^\infty } 
=
\left\| (\mathcal{F}^{-1} m ) \ast f \right\|_{ L^\infty } 
\leq 
\left\| \mathcal{F}^{-1} m \right\|_{ L^1 } \left\| f \right\|_{ L^\infty }
\lesssim 
\left\| m \right\|_{ H^2_s } \left\| f \right\|_{ L^\infty }
\end{equation*}
holds for all $m \in H^2_s \rn$ and all $f \in L^\infty \rn$.
\end{remark}

\subsection{Modulation spaces} \label{sec22}

We give the definition of modulation spaces which were introduced 
by Feichtinger \cite{feichtinger 1983} (see also Gr\"ochenig \cite{grochenig 2001}). 
We fix a function (called a window function) $g \in \mathcal{S} (\mathbb{R}^n) \setminus \{0\}$ 
and denote the short-time Fourier transform of $f \in \mathcal{S}^\prime (\mathbb{R}^n) $ 
with respect to $g$ by 
\begin{equation*}
V_g f (x,\xi) = \int_{\mathbb{R}^n} e^{ - i \xi \cdot t } \ \overline{g (t-x)} \, f(t) d t.
\end{equation*}
We will sometimes write $V_g [ f ] $ when the form of $f$ is complicated. 
For $1 \leq p,q \leq \infty$ and $s \in \mathbb{R}$, the modulation space $M^{ p , q }_s$ is defined by
\begin{equation*}
M^{p,q}_s  (\mathbb{R}^n)  = \left\{ f\in\mathcal{S}^\prime(\mathbb{R}^n) : \left\| f \right\|_{M^{p,q}_s} = \left\| \left\| \langle \xi \rangle^s V_g f (x,\xi) \right\|_{L^p(\mathbb{R}^n_x)} \right\|_{L^q(\mathbb{R}^n_\xi)} < \infty \right\}.
\end{equation*}

We note that the definition of modulation spaces is independent of the choice of window functions.
$M^{p,q}_s$ are Banach spaces
and $\mathcal{S} \subset M^{p,q}_s \subset \mathcal{S}^\prime$. 
In particular, $\mathcal{S}$ is dense in $M^{p,q}_s$ if $1 \leq p,q < \infty$. 
For $1 \leq p,q < \infty$, 
the dual space of $M^{p,q}_s$ can be seen as $\big( M^{p,q}_s \big)^\prime =  M^{p^\prime , q^\prime }_{-s} $.
Moreover, we have the following complex interpolation theorem. 
If $0 < \theta < 1$, $s = (1-\theta)s_1 + \theta s_2$, $ 1/p= (1-\theta)/p_1 + \theta/p_2 $ and $ 1/q= (1-\theta)/q_1 + \theta/q_2$, 
we have $\big( M^{p_1,q_1}_{s_1} , M^{p_2,q_2}_{s_2} \big)_\theta = M^{p,q}_{s}$. 
As a further elementary property, 
we note the following embedding
proved by Feichtinger \cite[Propositoin 6.5]{feichtinger 1983}.

\begin{proposition} \label{basic emb}
Let $1 \leq p_1, p_2, q_1, q_2 \leq \infty$ and $s_1, s_2 \in \mathbb{R}$. 
Then, we have
$M^{p_1,q_1}_{s_1} \hookrightarrow M^{p_2,q_2}_{s_2}$ 
for $p_1 \leq p_2$, $q_1 \leq q_2$ and $s_1 \geq s_2$.
\end{proposition}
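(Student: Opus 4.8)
The plan is to factor the claim into three one-index monotonicities, dispose of the weight for free, and treat the two genuinely nontrivial ones (in $p$ and in $q$) by a single self-reproducing convolution inequality for the short-time Fourier transform. First I would handle the weight: since $\langle\xi\rangle\geq1$ and $s_1\geq s_2$ we have $\langle\xi\rangle^{s_2}\leq\langle\xi\rangle^{s_1}$ pointwise, so $\|f\|_{M^{p_2,q_2}_{s_2}}\leq\|\langle\xi\rangle^{s_1}V_gf\|_{L^{p_2}(\mathbb{R}^n_x)L^{q_2}(\mathbb{R}^n_\xi)}$, and it remains to prove the pure index embedding $\|\langle\xi\rangle^{s}V_gf\|_{L^{p_2}_xL^{q_2}_\xi}\lesssim\|\langle\xi\rangle^{s}V_gf\|_{L^{p_1}_xL^{q_1}_\xi}$ for a fixed weight $s=s_1$ and $p_1\leq p_2$, $q_1\leq q_2$.

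The key input is the pointwise reproducing inequality
\begin{equation*}
|V_gf(x,\xi)|\leq\frac{1}{\|g\|_{L^2}^2}\bigl(|V_gf|\ast|V_gg|\bigr)(x,\xi),
\end{equation*}
where $\ast$ is convolution on $\mathbb{R}^{2n}$ and $V_gg\in\mathcal{S}(\mathbb{R}^{2n})$. This follows by inserting the inversion formula $f=\|g\|_{L^2}^{-2}\iint V_gf(y,\eta)\,M_\eta T_yg\,dy\,d\eta$ into $V_gf(x,\xi)=\langle f,M_\xi T_xg\rangle$ and using the covariance identity $|\langle M_\eta T_yg,M_\xi T_xg\rangle|=|V_gg(x-y,\xi-\eta)|$; the point is that $g\in\mathcal{S}$ makes the ambiguity kernel $V_gg$ a Schwartz function, hence rapidly decaying in every direction.

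With this in hand I would run two parallel one-parameter arguments. For the $x$-variable I apply Minkowski's integral inequality in $\eta$ followed by Young's inequality in $x$: writing $A(\eta)=\|\langle\eta\rangle^{s}V_gf(\cdot,\eta)\|_{L^{p_1}_x}$ and using Peetre's inequality $\langle\xi\rangle^{s}\lesssim\langle\xi-\eta\rangle^{|s|}\langle\eta\rangle^{s}$ to shift the weight onto the (still Schwartz) kernel, I obtain $\|\langle\xi\rangle^sV_gf(\cdot,\xi)\|_{L^{p_2}_x}\lesssim (A\ast B)(\xi)$ with $B\in\mathcal{S}(\mathbb{R}^n)$; Young's inequality in $x$ is admissible precisely because $p_1\leq p_2$ forces the kernel exponent $r$ determined by $1+1/p_2=1/p_1+1/r$ to satisfy $r\geq1$. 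Taking $L^{q_1}_\xi$-norms and using $B\in L^1$ keeps $q$ fixed. An entirely analogous computation, now taking $L^{p_2}_x$-norms first and applying Young's inequality in $\xi$, upgrades $q_1$ to $q_2$: the relation $1+1/q_2=1/q_1+1/r'$ yields $r'\geq1$ exactly because $q_1\leq q_2$, and the Schwartz kernel lies in $L^{r'}$. Composing the two steps gives the desired index embedding.

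The one place where something substantive happens — and the step I expect to be the main obstacle to present cleanly — is the $q$-direction, since on $\mathbb{R}^n$ one does \emph{not} have $L^{q_1}\hookrightarrow L^{q_2}$ for $q_1\leq q_2$: the embedding is false for generic functions and holds for the profiles $\xi\mapsto\|V_gf(\cdot,\xi)\|_{L^p_x}$ only because of the smoothing built into the convolution inequality above. Thus the whole argument hinges on the Schwartz regularity of the window, which is what converts a false continuous-$L^q$ embedding into a true one for modulation-space profiles; the remaining manipulations (Minkowski, Young, Peetre) are routine once the reproducing inequality is established.
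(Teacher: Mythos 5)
Your proof is correct. The paper does not actually prove this proposition — it is quoted directly from Feichtinger's 1983 report — and your argument (the reproducing inequality $|V_g f|\leq \|g\|_{L^2}^{-2}\,|V_g f|\ast|V_g g|$ with Schwartz kernel $V_g g$, Peetre's inequality to shift the weight onto the kernel, and Young's inequality separately in $x$ and in $\xi$, with the exponent relations $1+1/p_2=1/p_1+1/r$ and $1+1/q_2=1/q_1+1/r'$ encoding exactly $p_1\leq p_2$ and $q_1\leq q_2$) is precisely the standard proof of this embedding in the time-frequency literature, e.g.\ in Gr\"ochenig's book. One minor point of emphasis: your closing paragraph singles out the $q$-direction as the only substantive step, but the $p$-direction is nontrivial for exactly the same reason ($L^{p_1}\not\hookrightarrow L^{p_2}$ on $\mathbb{R}^n$ either), and both are rescued by the same convolution smoothing; this affects presentation, not correctness.
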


\subsection{Besov spaces} \label{sec23}

We here give the definition of Besov spaces (see also \cite[Section 2.3]{triebel 1983}). 
Let $\varphi \in \mathcal{S} (\mathbb{R}^n)$ 
satisfy that $\varphi = 1$ on $\{ \xi : | \xi | \leq 1/2 \}$ 
and $\supp \varphi \subset \{ \xi : | \xi | \leq 1 \}$. 
We put $\psi = \varphi (\cdot/2) - \varphi (\cdot)$, 
and then see that $\supp \psi \subset \{ \xi : 1/2 \leq | \xi | \leq 2 \}$.
Moreover, we set $\varphi_j = \varphi (\cdot / 2^j )$ and $\psi_j = \psi (\cdot / 2^j )$ 
for $j \in \mathbb{Z}_+$,
and denote the Fourier multiplier operators with respect to them by
\begin{align*}
S_j f = \varphi_j (D) f
\quad\textrm{and}\quad
\Delta_j f = \psi_j (D) f.
\end{align*}
We remark that 
\begin{align*}
\varphi + \sum_{ j=0 }^\infty \psi_j = 1 
\quad\textrm{and}\quad
S_0 f + \sum_{ j=0 }^\infty \Delta_j f = f,
\end{align*}
and also that $\Delta_j f = S_{j+1} f - S_j f $.
By using these notations, for $1 \leq p,q \leq \infty$ and $s \in \mathbb{R}$, 
the Besov space $B^{p,q}_s $ is defined by
\begin{equation*}
B^{p,q}_s (\mathbb{R}^n) 
= \left\{ f\in\mathcal{S}^\prime(\mathbb{R}^n) : \left\| f \right\|_{B^{p,q}_s} 
= \left\| S_0 f \right\|_{L^p} 
+ \left( \sum_{j=0}^\infty 2^{jsq} \left\| \Delta_j f \right\|_{L^p}^q \right)^{1/q}
< \infty \right\}.
\end{equation*}
Note that the norm of the Besov space is read with the usual modification for $ q = \infty$.
Besov spaces also have basic properties like modulation spaces, namely,
completeness, density, duality and interpolation. 
However, we omit mentioning the details and refer the reader to \cite[Section 2.3]{triebel 1983}.

\section{Multiplication algebras}
\label{sec3}

In this section, we collect some properties called multiplication algebras.
A function space $X$ is said to be a multiplication algebra 
if for all $f,g \in X$ the product $f \cdot g$ exists and belongs to $X$,
and if the inequality 
$\| f \cdot g \|_X \lesssim \| f \|_X \cdot \| g \|_X$
holds for all $f,g \in X$.
More precisely, see \cite[Section 2.8]{triebel 1983}.
The following results on $L^p$-Sobolev and Besov spaces are well-known
(see, e.g., Strichartz \cite[Chapter II, Theorem 2.1]{strichartz 1967} 
and Triebel \cite[Theorem 2.8.3]{triebel 1983}).

\begin{proposition}
\label{alge sob}
Let $1 < p < \infty$ and $s > n/p$. Then, we have
\begin{equation*}
\| f \cdot g \|_{H^p_s} \lesssim \| f \|_{H^p_s} \cdot \| g \|_{H^p_s}
\end{equation*}
for all $f,g \in H^p_s (\mathbb{R}^n)$.
\end{proposition}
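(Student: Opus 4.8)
The plan is to reduce the product estimate to the Sobolev embedding $H^p_s\rn\hookrightarrow L^\infty\rn$, which holds precisely because $s>n/p$, and then to control the product by Bony's paraproduct decomposition together with the Littlewood--Paley description of $H^p_s$. First I would record that $s>n/p$ gives $\|f\|_{L^\infty}\lesssim\|f\|_{H^p_s}$ and likewise for $g$; this is the only place the strict inequality $s>n/p$ is genuinely needed. Next I would use that $H^p_s$ admits the square-function characterization
\begin{equation*}
\|f\|_{H^p_s}\sim\|S_0 f\|_{L^p}+\left\|\left(\sum_{j=0}^\infty 2^{2js}\,|\Delta_j f|^2\right)^{1/2}\right\|_{L^p},
\end{equation*}
with $\Delta_j$, $S_j$ as in Section \ref{sec23} (the Triebel--Lizorkin description $H^p_s=F^{p,2}_s$). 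This is what lets an $L^\infty$ bound on one factor interact cleanly with the $L^p$-Sobolev bound on the other.

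Then I would split the product by Bony's decomposition
\begin{equation*}
f\cdot g=\sum_{j}S_{j-1}f\cdot\Delta_j g+\sum_{j}S_{j-1}g\cdot\Delta_j f+\sum_{|j-k|\le1}\Delta_j f\cdot\Delta_k g=:T_f g+T_g f+R(f,g).
\end{equation*}
For the two paraproducts, the summand $S_{j-1}g\cdot\Delta_j f$ has Fourier support in an annulus of size $\sim 2^j$, so after $\Delta_\ell$ only the indices $j\sim\ell$ survive and the blocks are almost orthogonal. Bounding $|S_{j-1}g|\lesssim\|g\|_{L^\infty}$ pointwise and feeding this into the square function gives $\|T_g f\|_{H^p_s}\lesssim\|g\|_{L^\infty}\|f\|_{H^p_s}$, and symmetrically $\|T_f g\|_{H^p_s}\lesssim\|f\|_{L^\infty}\|g\|_{H^p_s}$. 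By the embedding both are $\lesssim\|f\|_{H^p_s}\|g\|_{H^p_s}$.

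The main obstacle is the remainder $R(f,g)$: here $\Delta_j f\cdot\Delta_k g$ with $j\sim k$ has Fourier support in a \emph{ball} of radius $\lesssim 2^j$ rather than in an annulus, so the frequency localizations overlap and the orthogonality used for the paraproducts is lost. To handle it I would rewrite $R(f,g)=\sum_j \Delta_j f\cdot\widetilde\Delta_j g$ with $\widetilde\Delta_j=\Delta_{j-1}+\Delta_j+\Delta_{j+1}$, group the high--high interactions by their output frequency shell $\ell$ (only $j\gtrsim\ell$ contribute to $\Delta_\ell R$), estimate $\|\Delta_j f\cdot\widetilde\Delta_j g\|_{L^p}\lesssim\|\Delta_j f\|_{L^p}\|g\|_{L^\infty}$, and sum the series: pairing the weight $2^{\ell s}$ against the factor $2^{-js}$ carried by $\|\Delta_j f\|_{L^p}$ over $j\gtrsim\ell$ yields a convergent geometric-type sum precisely because $s>0$, giving $\|R(f,g)\|_{H^p_s}\lesssim\|f\|_{H^p_s}\|g\|_{L^\infty}$. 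Collecting the three pieces and applying the embedding completes the proof. As an alternative I would note that the same conclusion follows in one line from the fractional Leibniz (Kato--Ponce) inequality together with $H^p_s\hookrightarrow L^\infty$, but that inequality is itself established by the paraproduct argument above.
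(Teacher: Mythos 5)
First, a point of comparison: the paper does not actually prove Proposition \ref{alge sob}; it only cites Strichartz and Triebel. So your attempt is a genuine proof where the paper offers a reference, and your overall strategy --- the embedding $H^p_s\hookrightarrow L^\infty$ for $s>n/p$ combined with Bony's decomposition in the Littlewood--Paley characterization $H^p_s=F^{p,2}_s$ --- is indeed the standard way this algebra property is established in the literature. Your treatment of the two paraproducts is correct in outline: the pointwise bound $|S_{j-1}g|\lesssim\|g\|_{L^\infty}$ stays inside the square function, and the conclusion follows from the almost-orthogonality lemma for annulus-supported blocks in $F^{p,2}_s$ (a standard lemma, though its proof itself rests on the Fefferman--Stein vector-valued maximal inequality; also, with the paper's normalization of $\varphi_j,\psi_j$ you should use $S_{j-2}$ rather than $S_{j-1}$ so that the summands really have annular spectrum).

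The genuine gap is in the remainder $R(f,g)$. Your argument takes the $L^p$ norm of each block, $\|\Delta_jf\cdot\widetilde\Delta_jg\|_{L^p}\lesssim\|g\|_{L^\infty}\|\Delta_jf\|_{L^p}\lesssim\|g\|_{L^\infty}2^{-js}\|f\|_{H^p_s}$, and then sums over $j\ge\ell-C$ against the weight $2^{\ell s}$. What this summation produces is
\begin{equation*}
\sup_{\ell}\,2^{\ell s}\left\|\Delta_\ell R\right\|_{L^p}\lesssim\|g\|_{L^\infty}\|f\|_{H^p_s},
\end{equation*}
i.e.\ a bound on $\|R\|_{B^{p,\infty}_s}$ (at best, using Young's inequality for the $\ell^2$ sum, a bound on $\|R\|_{B^{p,2}_s}$). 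This is weaker than the claimed bound on $\|R\|_{H^p_s}$: once you pull the $L^p$ norm inside the $j$-sum you have left the Triebel--Lizorkin scale, and you cannot come back. Indeed, $B^{p,\min(p,2)}_s\hookrightarrow F^{p,2}_s\hookrightarrow B^{p,\max(p,2)}_s$, so a $B^{p,2}_s$-bound on $R$ controls $\|R\|_{H^p_s}$ only when $p\ge2$, while the blockwise input $2^{js}\|\Delta_jf\|_{L^p}\lesssim\|f\|_{H^p_s}$ is just the embedding $F^{p,2}_s\hookrightarrow B^{p,\infty}_s$; rebuilding the square function from blockwise $L^p$ data is exactly what fails for $p\ne2$ (your scheme does close when $p=2$). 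There are two standard repairs. (a) Stay pointwise inside $L^p(\ell^2)$: use $|\Delta_\ell(\Delta_jf\,\widetilde\Delta_jg)|\lesssim\|g\|_{L^\infty}M(\Delta_jf)$ with $M$ the Hardy--Littlewood maximal operator, sum in $j$ by Young's inequality pointwise in $x$, and then apply the Fefferman--Stein inequality $\|(\sum_j(Mu_j)^2)^{1/2}\|_{L^p}\lesssim\|(\sum_j|u_j|^2)^{1/2}\|_{L^p}$; this proves $\|R\|_{H^p_s}\lesssim\|g\|_{L^\infty}\|f\|_{H^p_s}$ exactly as you claimed it. (b) Do not discard $g$'s regularity: by Bernstein's inequality, $\|\widetilde\Delta_jg\|_{L^\infty}\lesssim2^{jn/p}\|\widetilde\Delta_jg\|_{L^p}\lesssim2^{-j(s-n/p)}\|g\|_{H^p_s}$, so each block gains a factor $2^{-j\delta}$ with $\delta=s-n/p>0$; your summation then yields $R\in B^{p,\infty}_{s+\delta}\hookrightarrow B^{p,1}_s\hookrightarrow F^{p,2}_s=H^p_s$. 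Repair (b) is elementary and closest in spirit to your computation, but note that it uses the full strength of $s>n/p$ in the remainder term, not merely $s>0$ as you assert.
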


\begin{proposition}
\label{alge bes}
Let $1 \leq p,q \leq \infty$ and $s > n/p$. Then, we have
\begin{equation*}
\| f \cdot g \|_{B^{p,q}_s} \lesssim \| f \|_{B^{p,q}_s} \cdot \| g \|_{B^{p,q}_s}
\end{equation*}
for all $f,g \in B^{p,q}_s (\mathbb{R}^n)$.
\end{proposition}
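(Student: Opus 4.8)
The plan is to reduce the product to Bony's paraproduct decomposition and to control the resulting pieces according to their frequency localization. Writing the Littlewood--Paley decomposition $f = S_0 f + \sum_{j\ge 0}\Delta_j f$ (and likewise for $g$) and using $\Delta_j = S_{j+1}-S_j$, I would reorganize the product as
\begin{equation*}
 fg = T_f g + T_g f + R(f,g),
\end{equation*}
where $T_f g = \sum_{j\ge 2} S_{j-2}f\,\Delta_j g$ is the low--high paraproduct, $T_g f$ is its symmetric counterpart, and $R(f,g)=\sum_{|j-k|\le 1}\Delta_j f\,\Delta_k g$ gathers the resonant high--high interactions together with the finitely many low-frequency products left over. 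The two families of terms behave very differently at the level of Fourier supports, and this dichotomy drives the whole argument.

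For the paraproduct $T_f g$ the frequency gap between $S_{j-2}f$ (with Fourier transform supported in $\{|\xi|\le 2^{j-2}\}$) and $\Delta_j g$ (supported in an annulus of size $2^j$) guarantees that each summand $S_{j-2}f\,\Delta_j g$ has Fourier transform supported in an annulus $\{2^{j-2}\le|\xi|\le 2^{j+2}\}$. Hence $\Delta_l(T_f g)$ is a sum of only boundedly many terms with $|j-l|\le N$, and I would estimate
\begin{equation*}
\|\Delta_l(T_f g)\|_{L^p}\lesssim\sum_{|j-l|\le N}\|S_{j-2}f\|_{L^\infty}\|\Delta_j g\|_{L^p}\lesssim\|f\|_{L^\infty}\sum_{|j-l|\le N}\|\Delta_j g\|_{L^p}.
\end{equation*}
Multiplying by $2^{ls}$, taking the $\ell^q$ norm in $l$, and invoking the standard embedding $B^{p,q}_s\hookrightarrow L^\infty$ for $s>n/p$ (itself a consequence of Bernstein's inequality, since then $\sum_k 2^{k(n/p-s)}$ converges) yields $\|T_f g\|_{B^{p,q}_s}\lesssim\|f\|_{L^\infty}\|g\|_{B^{p,q}_s}\lesssim\|f\|_{B^{p,q}_s}\|g\|_{B^{p,q}_s}$, and symmetrically for $T_g f$.

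The remainder $R(f,g)$ is where the hypothesis $s>n/p$ (in particular $s>0$) is essential, and it is the main obstacle. Here each summand $\Delta_j f\,\tilde\Delta_j g$, with $\tilde\Delta_j=\Delta_{j-1}+\Delta_j+\Delta_{j+1}$, has Fourier transform supported only in a ball $\{|\xi|\lesssim 2^j\}$, so low output frequencies receive contributions from all high input frequencies and the clean localization is lost. I would estimate $\|\Delta_l R(f,g)\|_{L^p}\lesssim\sum_{j\ge l-N}\|\Delta_j f\|_{L^p}\|\tilde\Delta_j g\|_{L^\infty}$ and then absorb the loss by writing
\begin{equation*}
2^{ls}\|\Delta_l R(f,g)\|_{L^p}\lesssim\sum_{j\ge l-N}2^{(l-j)s}\,c_j,\qquad c_j:=2^{js}\|\Delta_j f\|_{L^p}\,\|\tilde\Delta_j g\|_{L^\infty}.
\end{equation*}
Because $s>0$, the sequence $\{2^{-ms}\}_{m\ge -N}$ lies in $\ell^1$, so Young's convolution inequality on $\mathbb{Z}$ bounds the $\ell^q$ norm (in $l$) of the left-hand side by $\|\{c_j\}\|_{\ell^q}\lesssim\|f\|_{B^{p,q}_s}\|g\|_{L^\infty}$, and one last use of $B^{p,q}_s\hookrightarrow L^\infty$ closes the estimate. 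Combining the three contributions gives the algebra inequality. The delicate point throughout is precisely this resonant term: only the strict positivity of $s$ makes the geometric factor $2^{(l-j)s}$ summable over the low output frequencies $l\le j+N$, and it is there that the threshold $s>n/p$ is felt.
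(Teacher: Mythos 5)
Your argument is correct, but note that the paper itself contains no proof of Proposition \ref{alge bes}: the result is quoted as classical, with pointers to Strichartz and to Triebel \cite[Theorem 2.8.3]{triebel 1983}. So the real comparison is between your self-contained Bony paraproduct argument and the literature the paper leans on. Your decomposition $fg=T_fg+T_gf+R(f,g)$, the support analysis showing that $\Delta_l(T_fg)$ receives contributions from only $O(1)$ values of $j$, the resulting bound $\|T_fg\|_{B^{p,q}_s}\lesssim\|f\|_{L^\infty}\|g\|_{B^{p,q}_s}$, and the treatment of the resonant part via $2^{ls}\|\Delta_l R(f,g)\|_{L^p}\lesssim\sum_{j\ge l-N}2^{(l-j)s}c_j$ followed by Young's inequality in $\ell^q(\mathbb{Z})$ are all standard and correctly executed. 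You also correctly identify where each hypothesis enters: $s>0$ makes the resonant tail summable, while the full strength $s>n/p$ is needed only for the embedding $B^{p,q}_s\hookrightarrow L^\infty$ via Bernstein's inequality. This is essentially the mechanism behind the cited proofs, and it parallels the paper's own technique in Section \ref{sec4}, where the telescoping sum over $G(S_{j+1}f)-G(S_jf)$ plays the role of a nonlinear paraproduct; what the citation buys the authors is brevity for a classical fact, while your route buys a transparent, self-contained proof valid uniformly in $1\le p,q\le\infty$.

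Two routine points are left implicit and should be added for completeness. First, the Besov norm used in this paper contains the separate term $\|S_0(fg)\|_{L^p}$; it is handled by $\|S_0(fg)\|_{L^p}\lesssim\|fg\|_{L^p}\le\|f\|_{L^\infty}\|g\|_{L^p}$ together with $\|g\|_{L^p}\lesssim\|g\|_{B^{p,q}_s}$ (valid since $s>0$). Second, the ``finitely many low-frequency products left over'' in $R(f,g)$ (the $S_0f\cdot S_0g$-type terms) have Fourier support in a fixed ball, so they meet only finitely many $\Delta_l$ and are bounded by H\"older in the same way. Neither point is a gap; with these lines included the proof is complete, including the endpoint cases $p=\infty$ and $q=\infty$ (Bernstein, H\"older and Young all hold there, with the usual supremum modification of the norm).
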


Some of modulation spaces are also multiplication algebras
(see, e.g, Feichtinger \cite[Remark 6.4 and Proposition 6.9]{feichtinger 1983} 
and Sugimoto-Tomita-Wang \cite[Proposition 3.2]{sugimoto tomita wang 2011}).

\begin{proposition}
\label{alge mod}
Let $1 \leq p,q \leq \infty$ and $s > n/q^\prime$. Then, we have
\begin{equation*}
\| f \cdot g \|_{M^{p,q}_s} \lesssim \| f \|_{M^{p,q}_s} \cdot \| g \|_{M^{p,q}_s}
\end{equation*}
for all $f,g \in M^{p,q}_s (\mathbb{R}^n)$, and
\begin{equation*}
\| f \cdot g \|_{M^{p,1}_0} \lesssim \| f \|_{M^{p,1}_0} \cdot \| g \|_{M^{p,1}_0}
\end{equation*}
for all $f,g \in M^{p,1}_0 (\mathbb{R}^n)$.
\end{proposition}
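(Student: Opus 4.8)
The plan is to reduce the inequality to an a priori estimate for $f,g\in\calS$ (the general case then following by density when $p,q<\infty$, and by a weak-$*$/limiting argument otherwise, the product being well defined once one factor embeds into $L^\infty$). Since the modulation norm is independent of the window, I would fix a real Gaussian $g_0\in\calS\setminus\b0$, measure the product $fg$ with the window $\Phi=g_0^2\in\calS\setminus\b0$, and measure $f$ and $g$ with $g_0$. The starting point is the product-to-convolution identity for the short-time Fourier transform,
\begin{equation*}
V_{g_0^2}(fg)(x,\xi)=(2\pi)^{-n}\int_{\Rn}V_{g_0}f(x,\xi-\eta)\,V_{g_0}g(x,\eta)\,d\eta,
\end{equation*}
which holds because $V_{g_0^2}(fg)(x,\cdot)$ is the Fourier transform of the pointwise product $\p{\overline{g_0(\cdot-x)}f}\p{\overline{g_0(\cdot-x)}g}$. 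Thus multiplication turns into a convolution in the frequency variable $\xi$, with the space variable $x$ surviving as a common pointwise parameter in both factors.

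Next I would insert the weight. By the elementary inequality $\jp{\xi}^s\lesssim\jp{\xi-\eta}^s+\jp{\eta}^s$ (valid for $s\ge0$), the quantity $\jp{\xi}^s\abs{V_{g_0^2}(fg)(x,\xi)}$ splits into two symmetric terms, in each of which $\jp{\cdot}^s$ is attached to exactly one transform. Consider the term carrying the weight on $V_{g_0}f$. To take the $L^p_x$-norm I would apply Minkowski's integral inequality and then the Hölder bound $\n{uv}_{L^p_x}\le\n{u}_{L^p_x}\n{v}_{L^\infty_x}$, placing $L^\infty_x$ on the factor coming from $g$. This makes the inner norm a convolution (in $\xi$) of the frequency profile of $f$, namely $\zeta\mapsto\jp{\zeta}^s\n{V_{g_0}f(\cdot,\zeta)}_{L^p_x}$, with that of $g$ measured in $L^\infty_x$, namely $\eta\mapsto\n{V_{g_0}g(\cdot,\eta)}_{L^\infty_x}$. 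Taking the outer $L^q_\xi$-norm and invoking Young's inequality $\n{a*b}_{L^q}\le\n{a}_{L^q}\n{b}_{L^1}$ then produces the product of $\n{f}_{M^{p,q}_s}$ with $\int_{\Rn}\n{V_{g_0}g(\cdot,\eta)}_{L^\infty_x}\,d\eta$.

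It remains to control this last integral, and this is exactly where the hypothesis $s>n/q'$ enters: writing $1=\jp{\eta}^{-s}\jp{\eta}^{s}$ and using Hölder in $\eta$ with exponents $q',q$ gives
\begin{equation*}
\int_{\Rn}\n{V_{g_0}g(\cdot,\eta)}_{L^\infty_x}\,d\eta\le\n{\jp{\cdot}^{-s}}_{L^{q'}}\,\n{g}_{M^{\infty,q}_s},
\end{equation*}
and $\jp{\cdot}^{-s}\in L^{q'}\rn$ precisely when $sq'>n$, i.e. $s>n/q'$. Finally the embedding $M^{p,q}_s\hookrightarrow M^{\infty,q}_s$ of Proposition \ref{basic emb} replaces $\n{g}_{M^{\infty,q}_s}$ by $\n{g}_{M^{p,q}_s}$; the second term is handled identically with the roles of $f$ and $g$ exchanged, and adding the two yields $\n{fg}_{M^{p,q}_s}\lesssim\n{f}_{M^{p,q}_s}\n{g}_{M^{p,q}_s}$. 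The assertion for $M^{p,1}_0$ is the same computation with $s=0$ and $q=1$, where $q'=\infty$ and $\n{\jp{\cdot}^{0}}_{L^\infty}=1$ renders the $\eta$-integral harmless.

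The step I expect to be the main obstacle is the interplay between the order of the mixed norm ($L^p_x$ inside, $L^q_\xi$ outside) and the product structure: the convolution lives in $\xi$ while the multiplication is pointwise in $x$, so one is forced to spend an $L^\infty_x$ estimate on one factor and recover it through the embedding into $M^{\infty,q}_s$. The delicate points are ensuring that this does not cost any regularity and that the weight is distributed so that the surviving $\eta$-integral is exactly the one rendered finite by $s>n/q'$.
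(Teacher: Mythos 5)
Your proof is correct, but note that the paper never actually proves Proposition \ref{alge mod}: it is quoted from Feichtinger [Remark 6.4 and Proposition 6.9] and Sugimoto--Tomita--Wang [Proposition 3.2], so your argument is a self-contained substitute for a citation. What the paper does prove by hand is the Fourier--Lebesgue analogue, Proposition \ref{alge fou}, and your proof is precisely the lift of that argument to modulation spaces: the same weight splitting $\langle\xi\rangle^s\lesssim\langle\xi-\eta\rangle^s+\langle\eta\rangle^s$ and the same Young/H\"older scheme, with two extra ingredients needed to handle the surviving space variable, namely the product-to-convolution identity $V_{g_0^2}(fg)(x,\cdot)=(2\pi)^{-n}\,V_{g_0}f(x,\cdot)\ast V_{g_0}g(x,\cdot)$ (valid since $g_0$ is real, so $\overline{g_0^2(\cdot-x)}=\overline{g_0(\cdot-x)}\,\overline{g_0(\cdot-x)}$), combined with window independence of the norm, and the expenditure of an $L^\infty_x$ bound on one factor, recovered through the embedding $M^{p,q}_s\hookrightarrow M^{\infty,q}_s$ of Proposition \ref{basic emb}. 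The exponents all check out: Minkowski and H\"older in $x$, Young ($L^q\ast L^1\subset L^q$) in $\xi$, and H\"older in $\eta$ with exponents $(q,q')$, which is exactly where $s>n/q'$ (or $s=0$, $q=1$, where $\|\langle\cdot\rangle^{0}\|_{L^\infty}=1$) enters. One simplification you could make: the density/weak-$\ast$ hedge at the start is unnecessary, because under the stated hypotheses every element of $M^{p,q}_s$ embeds into $M^{\infty,1}_0$, hence is a bounded continuous function whose windowed pieces $\overline{g_0(\cdot-x)}f$ have integrable Fourier transforms; the convolution identity therefore holds pointwise for arbitrary $f,g$ in the space, so your estimate applies directly and covers $p=\infty$ or $q=\infty$ without any limiting argument.
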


Finally, we give the following counterpart for Fourier Lebesgue spaces.

\begin{proposition} \label{alge fou}
Let $1 \leq q \leq \infty$ and $s > n / q^\prime$.
Then, we have
\begin{equation*}
\| f \cdot g \|_{ \mathcal{F} L^q_s } \lesssim \| f \|_{ \mathcal{F} L^q_s } \cdot \| g \|_{ \mathcal{F} L^q_s }
\end{equation*}
for all $f,g \in \mathcal{F} L^q_s (\mathbb{R}^n)$, and
\begin{equation*}
\| f \cdot g \|_{ \mathcal{F} L^1_0 } \lesssim \| f \|_{ \mathcal{F} L^1_0 } \cdot \| g \|_{ \mathcal{F} L^1_0 }
\end{equation*}
for all $f,g \in \mathcal{F} L^1_0 (\mathbb{R}^n)$.
\end{proposition}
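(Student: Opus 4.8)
The plan is to reduce the algebra property of $\mathcal FL^q_s$ to a statement about convolution of weighted $L^q$ sequences/functions on the Fourier side, since the product $f\cdot g$ corresponds to the convolution $\widehat f * \widehat g$ under the Fourier transform. Concretely, since $\n{f}_{\mathcal FL^q_s}=\n{\jp{\cdot}^s\widehat f}_{L^q}$, I would set $u=\jp{\cdot}^s\widehat f$ and $v=\jp{\cdot}^s\widehat g$, so that $u,v\in L^q(\Rn)$ with norms equal to $\n{f}_{\mathcal FL^q_s}$ and $\n{g}_{\mathcal FL^q_s}$. The goal becomes the bound
\begin{equation*}
\n{\jp{\cdot}^s\,(\widehat f*\widehat g)}_{L^q}\lesssim \n{u}_{L^q}\n{v}_{L^q},
\end{equation*}
and writing $\widehat f=\jp{\cdot}^{-s}u$, $\widehat g=\jp{\cdot}^{-s}v$, everything reduces to controlling $\jp{\xi}^s\int \jp{\eta}^{-s}u(\eta)\jp{\xi-\eta}^{-s}v(\xi-\eta)\,d\eta$ in $L^q_\xi$.

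The key step is the elementary weight inequality
\begin{equation*}
\jp{\xi}^s\lesssim \jp{\eta}^s+\jp{\xi-\eta}^s,
\end{equation*}
valid for $s\ge 0$ (which holds since $s>n/q'\ge 0$). Splitting according to which term dominates, the weight $\jp{\xi}^s$ is absorbed so that in each of the two resulting pieces one factor loses all its weight while the other keeps a surplus $\jp{\cdot}^{-s}$. For the first piece I would estimate
\begin{equation*}
\Big|\int \jp{\xi-\eta}^{-s}v(\xi-\eta)\,u(\eta)\,d\eta\Big|
\le \big(\jp{\cdot}^{-s}|v|\big)*|u|\,(\xi),
\end{equation*}
and then apply Young's inequality. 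The surplus weight $\jp{\cdot}^{-s}$ is what makes this summable: the point is that $\jp{\cdot}^{-s}\in L^1(\Rn)$ exactly when $s>n$, which is too strong, so instead I would use Young with the exponent chosen so that $\jp{\cdot}^{-s}\in L^r$ with $1/r=1-1/q+1/q=1$ is not needed — rather $\jp{\cdot}^{-s}v\in L^1$ by Hölder, using $\jp{\cdot}^{-s}\in L^{q'}$ together with $v\in L^q$. Indeed $\jp{\cdot}^{-s}\in L^{q'}(\Rn)$ precisely when $sq'>n$, i.e. $s>n/q'$, which is exactly our hypothesis. Thus Hölder gives $\n{\jp{\cdot}^{-s}v}_{L^1}\lesssim \n{v}_{L^q}$, and Young's inequality $L^1*L^q\to L^q$ then yields the $L^q_\xi$ bound by $\n{v}_{L^q}\n{u}_{L^q}$. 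The second piece is handled symmetrically.

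The main obstacle, and the place where the sharp condition $s>n/q'$ enters, is precisely the integrability of the surplus weight: one must verify $\jp{\cdot}^{-s}\in L^{q'}(\Rn)$, which is equivalent to $s>n/q'$, and it is this borderline integrability rather than any cancellation that drives the estimate. For the endpoint case $q=1$, $s=0$ the weights are trivial and the claim is just the statement that $\mathcal FL^1_0=\mathcal F(L^1)$ is a convolution algebra on the Fourier side — equivalently $L^1(\Rn)$ is closed under convolution with $\n{u*v}_{L^1}\le\n{u}_{L^1}\n{v}_{L^1}$ — so I would dispose of it separately by the same reduction with $s=0$ and Young's inequality $L^1*L^1\to L^1$. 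Apart from checking the weight inequality and the one application of Hölder, the argument is routine, so I do not anticipate any genuine difficulty beyond tracking the exponents correctly.
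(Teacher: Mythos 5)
Your proposal is correct and follows essentially the same route as the paper's proof: the weight inequality $\langle\xi\rangle^s \lesssim \langle\xi-\eta\rangle^s + \langle\eta\rangle^s$ for $s\geq 0$, combined with H\"older's inequality (using $\langle\cdot\rangle^{-s}\in L^{q'}$, which is exactly the hypothesis $s>n/q'$) and Young's inequality, with the endpoint $q=1$, $s=0$ handled by the trivial boundedness of the weight. The only cosmetic difference is the order of the two inequalities (you apply H\"older to put $\langle\cdot\rangle^{-s}v$ in $L^1$ and then Young $L^1 * L^q \to L^q$, whereas the paper applies Young first and then H\"older), so the estimates are identical.
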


\begin{proof}[\bf Proof of Proposition \ref{alge fou}] 
From the inequality $\langle \xi \rangle^s \lesssim \langle \xi - \eta \rangle^s + \langle \eta \rangle^s $ 
for any $\xi, \eta \in \mathbb{R}^n$ and $s \geq 0$, we have
\begin{align*} 
\left\| f \cdot g \right\|_{ \mathcal{F} L^q_s } 
&\sim
\left\| \langle \xi \rangle^s \int_{ \mathbb{R}^n } \widehat f (\xi - \eta ) \cdot \widehat g ( \eta ) d \eta \right\|_{ L^q (\mathbb{R}^n_\xi) } 
\\
&\lesssim
\left\| 
	\int_{ \mathbb{R}^n } \langle \xi - \eta \rangle^s 
	\left| \widehat f (\xi - \eta ) \right|
	\cdot 
	\left| \widehat g ( \eta ) \right| 
	d\eta 
\right\|_{ L^q (\mathbb{R}^n_\xi) } 
+
\left\| 
	\int_{ \mathbb{R}^n } 
	\left| \widehat f (\xi - \eta ) \right|
	\cdot 
	\langle \eta \rangle^s \left| \widehat g ( \eta )  \right|
	d \eta 
\right\|_{ L^q (\mathbb{R}^n_\xi) }.
\end{align*}
Then, we have by the Young and H\"older inequalities
\begin{align*} 
\left\| f \cdot g \right\|_{ \mathcal{F} L^q_s } 
&\lesssim
\left\| \langle \cdot \rangle^s \widehat f  \right\|_{L^q} \cdot \left\| \widehat g \right\|_{ L^{ 1 } } 
+
\left\| \widehat f \right\|_{ L^{ 1 } } \cdot \left\| \langle \cdot \rangle^s \widehat g \right\|_{ L^q }.
\\
&\leq
\left\| f \right\|_{ \mathcal{F} L^q_s } 
\cdot \| \langle \cdot \rangle^{-s} \|_{L^{q^\prime}} \left\| g \right\|_{ \mathcal{F} L^q_s }
+
\| \langle \cdot \rangle^{-s} \|_{L^{q^\prime}} \left\| f \right\|_{ \mathcal{F} L^q_s } 
\cdot \left\| g \right\|_{ \mathcal{F} L^q_s },
\end{align*} 
which yields from the assumption $s > n / q^\prime$ that
$
\left\| f \cdot g \right\|_{ \mathcal{F} L^q_s } 
\lesssim
 \| f \|_{ \mathcal{F} L^q_s } \cdot \| g \|_{ \mathcal{F} L^q_s }
$.
Here, we remark that, 
in the case $q=1$,
$
\| \langle \cdot \rangle^{-s} \|_{L^{q^\prime}} 
$
is finite even if $s = 0$,
which gives the conclusion for $q=1$ and $s=0$.
\end{proof}

\section{Proof of Theorem \ref{thm on fou}} \label{sec4}

We begin this section with an observation which will be used
in the proof of Theorem \ref{thm on fou}. 
Put
\begin{align}
\label{exp g}
G(t) = F(t) - \sum_{k = 1}^N F^{ (k) } (0) \frac{ t^k }{ k! }
\end{align}
for any $N \in \mathbb{N}$, where $F \in C^\infty ( \mathbb{R} ) $ and $F (0) = 0$. 
Then, we see that $G (0) = G^{ (1) } (0)= \cdots= G^{ (N) } (0) = 0$, and have 
\begin{align}
\label{f and g}
F(f) = G(f) + \sum_{k = 1}^N F^{ (k) } (0) \frac{ f^k }{ k! }.
\end{align}
In order to obtain Theorem \ref{thm on fou}, 
we will prove that the right hand side of \eqref{f and g} belongs to $\mathcal{F} L^q_s$. 
However, it is trivial that the second term belongs to $\mathcal{F} L^q_s$,
since $\mathcal{F} L^q_s (\mathbb{R}^n)$ with $s > n / q^\prime$ 
is a multiplication algebra (see Proposition \ref{alge fou}).
Hence, Theorem \ref{thm on fou} is reduced to the following statement.

\begin{proposition}
\label{thm on fou reduction}
Let $4/3 \leq q \leq \infty$ and $s > n/q^\prime$.
Assume that $f:\mathbb{R}^n \to \mathbb{R}$,
$f \in \mathcal{F} L^q_s(\mathbb{R}^n)$, 
$G \in C^\infty (\mathbb{R})$ 
and $G (0) = G^{ (1) } (0)= \cdots= G^{ ([s]+2) } (0) = 0$. 
Then, we have $G(f) \in \mathcal{F} L^q_s(\mathbb{R}^n)$.
\end{proposition}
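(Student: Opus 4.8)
The plan is to paralinearize $G(f)$ and then estimate the resulting pieces in $\mathcal{F}L^q_s$ by means of Hahn's multiplier theorem (Proposition \ref{hahn}). First I would set $f_j=S_jf$ and write, telescoping and using $\Delta_jf=S_{j+1}f-S_jf$,
\[
G(f)=G(S_0f)+\sum_{j=0}^\infty\bigl(G(S_{j+1}f)-G(S_jf)\bigr)=G(S_0f)+\sum_{j=0}^\infty m_j\,\Delta_jf,\qquad m_j=\int_0^1 G'\bigl(S_jf+\tau\Delta_jf\bigr)\,d\tau .
\]
Note $G'(0)=0$ because $G$ vanishes to order $[s]+2$. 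The band-limited term $G(S_0f)$ is harmless (it lies in the algebra $\mathcal{F}L^q_s$ by Proposition \ref{alge fou}), so the whole matter is to sum the series $\sum_j m_j\Delta_jf$ in $\mathcal{F}L^q_s$.

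The essential device is that, although multiplication by $m_j$ is \emph{not} controlled by $\|m_j\|_{L^\infty}$ on a Fourier Lebesgue space (this is exactly what fails near $q=1$ and forces analyticity), it is controlled by a Sobolev norm of $m_j$. Reading $\widehat{m_j\,h}=\widehat{m_j}\ast\widehat{h}$ as a Fourier multiplier acting on the frequency side, Proposition \ref{hahn} yields
\[
\|m_j\,h\|_{\mathcal{F}L^q_0}\lesssim\|m_j\|_{H^p_{s_0}}\,\|h\|_{\mathcal{F}L^q_0}
\]
whenever $2\le p<\infty$, $s_0>n/p$ and $q$ lies in Hahn's range $2p/(p+2)\le q\le 2p/(p-2)$. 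The crux is then to bound $\|m_j\|_{H^p_{s_0}}$ \emph{uniformly} in $j$. For this I would use an embedding $\mathcal{F}L^q_s\hookrightarrow H^p_{s_0}$: for $q\ge2$ take $p=2$ and invoke Proposition \ref{fou to sob}, while for $q<2$ take $p=q'$ and use Hausdorff--Young, $\|g\|_{L^{q'}}\lesssim\|\widehat g\|_{L^q}$. Since $S_jf+\tau\Delta_jf$ is a uniformly bounded Fourier truncation of $f$, Theorem A applied to $G'$ (in the explicit form of Remark \ref{explicit meyer}, legitimate since $G'(0)=0$) then gives $\|m_j\|_{H^p_{s_0}}\lesssim C(\|f\|_{L^\infty})\,\|f\|_{\mathcal{F}L^q_s}$ with a constant independent of $j$.

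The two requirements on $p$ collide exactly at $q=4/3$: Hahn's range demands $q\ge 2p/(p+2)$, whereas the embedding used to control $m_j$ demands $p\ge q'$, and a common admissible $p\ge2$ exists if and only if $q'\le 2q/(2-q)$, i.e. if and only if $q\ge 4/3$ (with $p=q'=4$ at the endpoint). This is the structural origin of the hypothesis $4/3\le q$.

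Finally I would feed these estimates into a double Littlewood--Paley summation: apply $\Delta_k$ to $\sum_j m_j\Delta_jf$ and estimate each $\Delta_k(m_j\Delta_jf)$ in $\mathcal{F}L^q_s$. In the regime $k\lesssim j$ the uniform bound on $\|m_j\|_{H^p_{s_0}}$ combined with the output weight $\langle\xi\rangle^s\sim 2^{ks}$ produces a factor $2^{(k-j)s}$, summable because $s>0$; in the regime $k\gg j$ the high frequencies of the band-limited composition $m_j$ decay faster than any power (smoothness of $G'$ together with a Bernstein inequality, $\|m_j\|_{H^p_M}\lesssim 2^{jM}$), which beats the polynomial losses coming from the weight and from Hahn. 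Reassembling through the $\ell^q$ structure of $\mathcal{F}L^q_s$ and the condition $s>n/q'$ (which makes $\{\|\Delta_jf\|_{\mathcal{F}L^q_s}\}_j\in\ell^q$) would then yield the claim. The main obstacle is precisely the high-high interaction $k\lesssim j$: a naive termwise summation fails, because the only general multiplier bound available, $\|m_j\|_{H^p_{s_0}}$ with $s_0>n/p>0$, carries a spurious growth $2^{js_0}$ under Bernstein's inequality. Removing this growth is exactly the purpose of the uniform embedding bound $\|m_j\|_{H^p_{s_0}}\lesssim\|f\|_{\mathcal{F}L^q_s}$, which is available only in the compatibility range $q\ge4/3$; the bookkeeping for $4/3\le q<2$, where no physical-space $L^2$ control of $f$ is at hand, is the most delicate part of the argument.
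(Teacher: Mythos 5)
Your outline follows the paper's own strategy closely: the same telescoping paralinearization $G(f)=G(S_0f)+\sum_{j}m_j\Delta_jf$, the same device of reading multiplication by $m_j$ as the Fourier multiplier $\widetilde{m_j}(D)$ (with $\widetilde{m_j}(x)=m_j(-x)$) acting on $\psi_j\widehat f$ so that Proposition \ref{hahn} applies, the same choices $p=q'$ for $q\le 2$ and $p=2$ for $q>2$, and the correct identification of where $q\ge 4/3$ comes from. But two steps are genuinely defective. The smaller one: $G(S_0f)$ does not ``lie in the algebra $\mathcal{F}L^q_s$ by Proposition \ref{alge fou}.'' The multiplication algebra property handles powers $f^k$, not compositions with a non-analytic $G$; controlling such compositions is the very problem being solved. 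The paper's Step 3 must treat this term with the same multiplier machinery as the rest: it writes $G(S_0f)=m_f\cdot S_0f$ with $m_f=\int_0^1G^{(1)}(t\,S_0f)\,dt$, bounds $\|\partial_\ell^{\mu}m_f\|_{H^{q'}_s}$ (or $\|\partial_\ell^{\mu}m_f\|_{H^2_{\widetilde s}}$) via Theorem A, and then applies Proposition \ref{hahn}.

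The serious gap is your regime $k\gg j$. Your argument there rests on Bernstein's inequality for $m_j$, giving $\|m_j\|_{H^p_M}\lesssim 2^{jM}$ for arbitrary $M$ and hence faster-than-any-power decay of the high frequencies of $m_j$. But $m_j=\int_0^1G^{(1)}(S_jf+\tau\Delta_jf)\,d\tau$ is \emph{not} band-limited: a smooth non-polynomial function of a band-limited function in general has full spectrum (this is exactly why the associated symbol is of exotic class and why the problem is nontrivial), so Bernstein does not apply to $m_j$ itself, and the faster-than-any-power claim is false. The correct substitute, which is the technical core of the paper (Lemmas \ref{deriv mj} and \ref{lem pkm}), is a chain-rule estimate valid for exactly $|\alpha|=[s]+1$ derivatives: writing $f_{j,\tau}=S_jf+\tau\Delta_jf$, the derivative $\partial^\alpha[G^{(1)}(f_{j,\tau})]$ is a sum of terms $G^{(\mu+1)}(f_{j,\tau})\,\partial^{\alpha_1}f_{j,\tau}\cdots\partial^{\alpha_\mu}f_{j,\tau}$ with $1\le\mu\le[s]+1$; Bernstein applies to the band-limited factors $\partial^{\alpha_i}f_{j,\tau}$, while each factor $G^{(\mu+1)}(f_{j,\tau})$ is bounded in $H^{q'}_s$ (or $H^2_{\widetilde s}$) by Theorem A --- and Theorem A requires $G^{(\mu+1)}(0)=0$ for every $\mu\le[s]+1$, i.e.\ precisely the hypotheses $G^{(2)}(0)=\cdots=G^{([s]+2)}(0)=0$. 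This caps the available bound at $\|\partial^\alpha m_j\|_{H^{q'}_s}\lesssim 2^{j([s]+1)}$, hence at $[s]+1$ orders of frequency decay ($\|p_{j,m}\|_{H^{q'}_s}\lesssim 2^{-m([s]+1)}$ in the paper's notation), which still suffices against the weight only because $[s]+1>s$. Your proposal invokes only $G^{(1)}(0)=0$ and never uses the remaining vanishing conditions --- the clearest sign that this key lemma is missing from it.
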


Before starting the proof of Proposition \ref{thm on fou reduction}, 
we transform $G(f)$ to a more manageable alternative expression,
which was provided by Meyer \cite[Section 2]{meyer 1981}.
We first remark 
that $
f \in \mathcal{F} L^q_s (\mathbb{R}^n) 
\hookrightarrow H^{q^\prime}_s (\mathbb{R}^n)$ 
holds for $q \leq 2$,
and that $f \in \mathcal{F}L^q_s (\mathbb{R}^n) 
\hookrightarrow H^2_{\widetilde s} (\mathbb{R}^n) $
for $q > 2$ and $ n/2 < \widetilde s < n/2 + (s - n/q^\prime)$ (see Proposition \ref{fou to sob}).
They imply that $f$ belongs to $B^{\infty,1}_0 (\mathbb{R}^n) $, hence to $L^\infty (\mathbb{R}^n)$,
and so $f$ is a bounded uniformly continuous function.
Then $S_j f$ converges uniformly to $f$ as $j \to \infty$,
and $G(f) = G ( \lim_{j \to \infty} S_j f ) = \lim_{j \to \infty} G ( S_j f ) $. 
By the mean value theorem and the fact $S_{j+1} f = S_j f + \Delta_j f$, we have
\begin{align*}
G(f) 
&= 
G(S_0 f) + \sum_{j = 0}^\infty \left[ G(S_{j+1} f) - G(S_j f) \right]
\\
&= 
G(S_0 f) + \sum_{j = 0}^\infty \int_0^1 G^{(1)} \left( S_j f + t \Delta_j f \right) d t \cdot \Delta_j f
=
G(S_0 f) + \sum_{j = 0}^\infty m_{j} \cdot \Delta_j f 
,
\end{align*}
where we set
\begin{equation}
\label{mk}
m_{j} = \int_0^1 G^{(1)} \left( S_j f + t \Delta_j f \right) d t.
\end{equation}
Moreover, we decompose $m_{j}$ into the low and high frequency parts. 
Recall from Section \ref{sec23}
that $ \varphi (\xi) + \sum_{m = 0}^\infty \psi ( 2^{-m} \xi ) = 1 $
for any $\xi \in \mathbb{R}^n$.
Then, it follows that
\begin{equation*}
\varphi \left( \frac{ \xi }{ C \cdot 2^{j} } \right) + \sum_{m = 0}^\infty \psi \left( \frac{ \xi }{ C \cdot 2^{j+m} } \right) = 1 
\end{equation*}
for any $\xi \in \mathbb{R}^n$, 
where $C$ is a sufficiently large constant. 
Using this decomposition, we have
\begin{align*}
m_{j} 
=
\varphi \left( \frac{ D }{ C \cdot2^{j} } \right) m_{j} 
+ \sum_{m = 0}^\infty \psi \left( \frac{ D }{ C \cdot2^{j+m} } \right) m_{j}
=
q_j + \sum_{m = 0}^\infty p_{j,m},
\end{align*}
where we set 
\begin{equation} \label{qkpkm}
q_j = \varphi \left( \frac{ D }{ C \cdot 2^{j} } \right) m_{j}
\quad\textrm{and}\quad
p_{j,m} = \psi \left( \frac{ D }{ C \cdot 2^{j+m} } \right) m_{j}.
\end{equation}
Therefore, $G(f)$ is expressed in the following form:
\begin{equation}
\label{final decom f}
G(f) = G(S_0 f) 
+ \sum_{j = 0}^\infty q_j \cdot \Delta_j f 
+ \sum_{j = 0}^\infty \sum_{m = 0}^\infty p_{j,m} \cdot \Delta_j f .
\end{equation}

From now on, we give estimates for each terms 
of the expression \eqref{final decom f} without specifying
constants explicitly.
(We however remark that these implicit constants may depend
on $\n{f}_{\mathcal FL^q_s}$.)
We start by stating 
two lemmas.
The first one is for $q_j$ in \eqref{qkpkm}.

\begin{lemma}
\label{lem qk}
Let $1 < q \leq \infty$, $s > n/q^\prime$ and $ n/2 < \widetilde s < n/2 + (s - n/q^\prime)$. 
Suppose that $f \in \mathcal{F} L^q_s (\mathbb{R}^n)$
and all the assumptions of $G$ are the same as in Proposition \ref{thm on fou reduction}. 
Then, we have
\begin{align*}
\left\| q_j \right\|_{ H^{q^\prime}_s } 
&\lesssim 1
\quad\textrm{if}\quad
1 < q \leq 2;
\\
\left\| q_j \right\|_{ H^{2}_{\widetilde s} } 
&\lesssim 1
\quad\textrm{if}\quad
2 < q \leq \infty
\end{align*}
for any $j \in \mathbb{Z}_+$. 
Here, the implicit constants are independent of $j \in \mathbb{Z}_+$.
\end{lemma}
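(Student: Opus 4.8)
The plan is to recognize that $q_j$ is, up to a harmless low-pass filter, the composition of $G^{(1)}$ with a fixed-size low-frequency function, so that the uniform bound follows from Meyer's composition estimate applied to $G^{(1)}$ rather than from any Bernstein-type inequality (the latter would cost an unaffordable factor $2^{js}$). Writing $h_{j,t} = S_j f + t\Delta_j f = (1-t)S_j f + t S_{j+1} f$ and using that $\varphi(D/(C\cdot 2^j))$ commutes with the integral in \eqref{mk}, I would record
\[
q_j = \int_0^1 \varphi\!\left(\frac{D}{C\cdot 2^j}\right) G^{(1)}(h_{j,t})\, dt .
\]

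First I would note that $S_j$, $S_{j+1}$ and $\varphi(D/(C\cdot 2^j))$ are Fourier multipliers whose convolution kernels have an $L^1$ norm independent of $j$ (by scaling) and which commute with $(I-\Delta)^{\sigma/2}$; hence each is bounded on $L^\infty$ and on $H^p_\sigma$ uniformly in $j$, for $1<p<\infty$ and $\sigma \ge 0$. Consequently $\|q_j\|_{H^p_\sigma} \lesssim \int_0^1 \|G^{(1)}(h_{j,t})\|_{H^p_\sigma}\,dt$, and it remains only to bound the integrand uniformly in $j$ and $t$.

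I take $(p,\sigma) = (q^\prime,s)$ when $1<q\le 2$ and $(p,\sigma)=(2,\widetilde s)$ when $2<q\le\infty$. In either case the embeddings $\mathcal{F}L^q_s \hookrightarrow H^{q^\prime}_s$ (for $q\le 2$) and $\mathcal{F}L^q_s\hookrightarrow H^2_{\widetilde s}\hookrightarrow L^\infty$ (Proposition \ref{fou to sob}) give $\|f\|_{L^\infty} + \|f\|_{H^p_\sigma} \lesssim \|f\|_{\mathcal{F}L^q_s}$, so by the previous step $\|h_{j,t}\|_{L^\infty} \lesssim 1$ and $\|h_{j,t}\|_{H^p_\sigma}\lesssim 1$ uniformly in $j,t$. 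Since $G^{(1)} \in C^\infty$ with $G^{(1)}(0)=0$, and $\sigma > n/p$ holds in both cases ($s>n/q^\prime$, resp. $\widetilde s > n/2$), I apply the explicit composition estimate of Theorem~A and Remark \ref{explicit meyer} to the nonlinearity $G^{(1)}$:
\[
\|G^{(1)}(h_{j,t})\|_{H^p_\sigma} \lesssim \|G^{(2)}\|_{C^{[\sigma]+1}(\Omega)}\bigl(1 + \|h_{j,t}\|_{L^\infty}^{[\sigma]+1}\bigr)\|h_{j,t}\|_{H^p_\sigma},
\]
where $\Omega$ is a fixed bounded interval determined by the uniform $L^\infty$ bound on $h_{j,t}$. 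The right-hand side is $\lesssim 1$ uniformly in $j,t$, and integrating in $t$ yields the claim.

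The main point to get right — and essentially the only subtlety — is to avoid estimating the frequency-localized $q_j$ by Bernstein's inequality (which would produce the divergent factor $2^{js}$) and instead to exploit that $f$ genuinely lies in the Sobolev scale, so that $h_{j,t}$ has $H^p_\sigma$-norm controlled uniformly by $\|f\|_{\mathcal{F}L^q_s}$ through the uniform boundedness of the Littlewood--Paley projections; the composition estimate then closes the argument with constants depending only on $\|f\|_{L^\infty}$ and finitely many derivatives of $G$. I remark that only $G^{(1)}(0)=0$ is used here, the higher-order vanishing of $G$ being reserved for the estimate of the high-frequency pieces $p_{j,m}$.
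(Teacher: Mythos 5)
Your proposal is correct and takes essentially the same route as the paper's proof: you bound $q_j$ by the uniform ($j$-independent) $H^p_\sigma$-boundedness of the Littlewood--Paley-type multipliers, control $\left\| f_{j,t} \right\|_{L^\infty}$ and $\left\| f_{j,t} \right\|_{H^p_\sigma}$ uniformly via the embeddings $\mathcal{F}L^q_s \hookrightarrow H^{q^\prime}_s$ (for $q \leq 2$) and $\mathcal{F}L^q_s \hookrightarrow H^2_{\widetilde s} \hookrightarrow L^\infty$ (Proposition \ref{fou to sob}), and then close with Theorem A and Remark \ref{explicit meyer} applied to $G^{(1)}$, exactly as the paper does (which likewise uses only $G^{(1)}(0)=0$ at this stage).
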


\begin{proof}[\bf Proof of Lemma \ref{lem qk}]
We first consider the estimate with $1 < q \leq 2$.
Set $f_{j,t} = S_j f + t \Delta_j f $. 
Recalling the definition of $m_{j}$ from \eqref{mk}, we have
\begin{align*}
\left\| q_j \right \|_{ H^{q^\prime}_s } 
\lesssim
\left\| m_{j} \right\|_{ H^{q^\prime}_s } 
\leq
\int_0^1 \left\| G^{(1)} \left( f_{j,t} \right) \right\|_{ H^{q^\prime}_s } d t.
\end{align*}
Observe that
\begin{align*}
\left\| f_{j,t} \right\|_{ H^{q^\prime}_s } 
&\lesssim 
\left( \| \mathcal{F}^{-1} \varphi_j \|_{L^1} 
+ t \| \mathcal{F}^{-1} \psi_j \|_{L^1} \right)
\| f \|_{ H^{q^\prime}_s } 
\\
&\lesssim 
\left( \| \mathcal{F}^{-1} \varphi \|_{L^1} 
+ t \| \mathcal{F}^{-1} \psi \|_{L^1} \right)
\| f \|_{ \mathcal {F} L^q_s} 
\lesssim
\| f \|_{ \mathcal {F} L^q_s} ,
\end{align*} 
which means that $f_{j,t} \in H^{q^\prime}_s$ for any $j \in \mathbb{Z}_+$ and any $t \in [0,1]$.
Then, using Theorem A and Remark \ref{explicit meyer} 
together with the assumptions $G \in C^\infty (\mathbb{R})$ and $G^{(1)} (0) = 0$, 
we have
\begin{align*}
\left\| G^{(1)} ( f_{j,t}) \right\|_{H^{q^\prime}_s}
&\lesssim
\| G^{(2)} \|_{C^{[s]+1}(\Omega)}
\left( 1 + \left\| f_{j,t} \right\|_{L^\infty}^{[s]+1}\right)
\left\| f_{j,t} \right\|_{H^{q^\prime}_s}
\\
&\lesssim
\| G \|_{C^{[s]+3}(\Omega)}
\left( 1 + \left\| f \right\|_{L^\infty}^{[s]+1}\right)
\| f \|_{ \mathcal {F} L^q_s},
\end{align*}
where 
$\Omega = \{ t : |t| \lesssim \| f \|_{L^\infty} \}$.
Note that the last quantity is finite
since $f \in \mathcal{F} L^q_s (\mathbb{R}^n) \hookrightarrow L^\infty (\mathbb{R}^n)$ for $s > n/q^\prime$
and the smooth function $G \in C^\infty (\mathbb{R} )$
is measured by $C^{[s]+3} $ on the closed and bounded domain $\Omega$. 
Therefore, we have $ \left\| q_j \right\|_{ H^{q^\prime}_s } \lesssim 1$ for $1 < q \leq 2$.

We next consider the estimate with $2 < q \leq \infty$.
This is, however, immediately given by the same argument as above.
In fact, since we already know from Proposition \ref{fou to sob} 
that $f \in \mathcal{F}L^q_s \rn \hookrightarrow H^2_{\widetilde s} \rn \hookrightarrow L^\infty \rn$,
we have by Theorem A and Remark \ref{explicit meyer}
\begin{align*}
\left\| G^{(1)} ( f_{j,t}) \right\|_{ H^{2}_{\widetilde s} }
&\lesssim
\| G^{(2)} \|_{C^{[\widetilde s]+1}(\Omega)}
\left(
1
+
\left\| f_{j,t} \right\|_{L^\infty}^{[\widetilde s]+1}
\right)
\left\| f_{j,t} \right\|_{ H^{2}_{\widetilde s} }
\\
&\lesssim
\| G \|_{C^{[\widetilde s]+3}(\Omega)}
\left(
1
+
\left\| f \right\|_{L^\infty}^{[\widetilde s]+1}
\right)
\left\| f \right\|_{ H^{2}_{\widetilde s} }
\lesssim
\| G \|_{C^{[\widetilde s]+3}(\Omega)}
\left( 1 + \left\| f \right\|_{L^\infty}^{[\widetilde s]+1}\right)
\| f \|_{ \mathcal {F} L^q_s}.
\end{align*}
Note that the last quantity is finite. 
Hence, we obtain 
$\left\| q_j \right\|_{ H^{2}_{\widetilde s} } \lesssim 1$
for $2 < q \leq \infty$.
\end{proof}

The second one is concerning $p_{j,m}$ in \eqref{qkpkm}.

\begin{lemma}
\label{lem pkm}
Let $1 < q \leq \infty$, $s > n/q^\prime$ and $ n/2 < \widetilde s < n/2 + (s - n/q^\prime)$. 
Suppose that $f \in \mathcal{F} L^q_s (\mathbb{R}^n)$
and all the assumptions of $G$ are the same as in Proposition \ref{thm on fou reduction}. 
Then, we have
\begin{align*}
\left\| p_{j,m} \right\|_{ H^{q^\prime}_s } 
&\lesssim 
2^{-m([s]+1)}
\quad\textrm{if}\quad
1 < q \leq 2;
\\
\left\| p_{j,m} \right\|_{ H^{2}_{\widetilde s} } 
&\lesssim 
2^{-m([s]+1)}
\quad\textrm{if}\quad
2 < q \leq \infty
\end{align*}
for any $j,m \in \mathbb{Z}_+$.
Here, the implicit constants are independent of $j,m \in \mathbb{Z}_+$.
\end{lemma}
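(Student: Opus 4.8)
The plan is to exploit the structural difference between $m_j$ and $p_{j,m}$: the building blocks $f_{j,t} = S_j f + t\Delta_j f$ are band-limited at frequency scale $2^j$, whereas $p_{j,m}$ isolates the piece of $m_j$ living at the much higher scale $2^{j+m}$. Differentiating a function band-limited at scale $2^j$ costs only a factor $2^j$ per derivative, while localizing at scale $2^{j+m}$ and factoring out $N := [s]+1$ derivatives gains a factor $(2^{j+m})^{-N}$. The two powers $2^{jN}$ cancel, and what survives is exactly the decay $2^{-m([s]+1)}$.

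First I would record a Bernstein-type smoothing identity. Since $\psi$ is supported in the annulus $\{1/2 \le |\eta| \le 2\}$, on which $|\eta|^{2N}$ is bounded below, one can write $\psi(\eta) = \sum_{|\alpha|=N}(i\eta)^\alpha \psi_\alpha(\eta)$ with each $\psi_\alpha \in C_0^\infty$ supported in a slightly larger annulus. Setting $\lambda = C\cdot 2^{j+m}$ and rescaling gives the operator identity $\psi(D/\lambda) = \lambda^{-N}\sum_{|\alpha|=N}\psi_\alpha(D/\lambda)\,\partial^\alpha$. Because $\mathcal{F}^{-1}[\psi_\alpha(\cdot/\lambda)]$ has $L^1$-norm independent of $\lambda$ and $\psi_\alpha(D/\lambda)$ commutes with $(I-\Delta)^{s/2}$, these multipliers are bounded on $H^{q^\prime}_s$ (and on $H^2_{\widetilde s}$) uniformly in $j,m$. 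Hence
\[
\left\| p_{j,m} \right\|_{H^{q^\prime}_s} \lesssim 2^{-(j+m)N}\sum_{|\alpha|=N}\left\| \partial^\alpha m_j \right\|_{H^{q^\prime}_s},
\]
and likewise with $H^2_{\widetilde s}$ in place of $H^{q^\prime}_s$. It then suffices to prove $\left\| \partial^\alpha m_j \right\|_{H^{q^\prime}_s} \lesssim 2^{jN}$ (resp.\ in $H^2_{\widetilde s}$) for $|\alpha|=N$.

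Next I would differentiate $m_j = \int_0^1 G^{(1)}(f_{j,t})\,dt$ by the Fa\`a di Bruno formula, so that $\partial^\alpha m_j$ is a finite sum of terms $\int_0^1 G^{(1+r)}(f_{j,t})\prod_{l=1}^r \partial^{\beta_l}f_{j,t}\,dt$ with $\beta_1+\cdots+\beta_r=\alpha$, $|\beta_l|\ge 1$ and $1\le r\le N$. For $1 < q \le 2$ one has $2 \le q^\prime < \infty$ and $s > n/q^\prime$, so $H^{q^\prime}_s$ is a multiplication algebra by Proposition \ref{alge sob}; for $2 < q \le \infty$ the space $H^2_{\widetilde s}$ with $\widetilde s > n/2$ is a multiplication algebra by the same proposition. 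Applying the algebra property to each term splits the estimate into two kinds of factors. Each $\partial^{\beta_l}f_{j,t}$ is band-limited at scale $2^j$, so the rescaling argument above yields $\left\| \partial^{\beta_l}f_{j,t} \right\|_{H^{q^\prime}_s}\lesssim 2^{j|\beta_l|}\left\| f_{j,t} \right\|_{H^{q^\prime}_s}\lesssim 2^{j|\beta_l|}$, using the bound $\left\| f_{j,t} \right\|_{H^{q^\prime}_s}\lesssim \left\| f \right\|_{\mathcal{F}L^q_s}$ from the proof of Lemma \ref{lem qk} (and its analogue in $H^2_{\widetilde s}$). For the factor $G^{(1+r)}(f_{j,t})$ I would invoke Theorem A and Remark \ref{explicit meyer}: this is legitimate because $r\le N=[s]+1$ forces $1+r\le [s]+2$, so the hypothesis $G^{(1+r)}(0)=0$ holds and gives $\left\| G^{(1+r)}(f_{j,t}) \right\|_{H^{q^\prime}_s}\lesssim \| G \|_{C^{[s]+2+r}(\Omega)}\bigl(1+\| f \|_{L^\infty}^{[s]+1}\bigr)\| f \|_{\mathcal{F}L^q_s}\lesssim 1$, with $\Omega = \{t:|t|\lesssim \| f \|_{L^\infty}\}$. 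Multiplying, each term is $\lesssim 2^{j\sum_l|\beta_l|}=2^{j|\alpha|}=2^{jN}$, whence $\left\| \partial^\alpha m_j \right\|_{H^{q^\prime}_s}\lesssim 2^{jN}$, and identically in $H^2_{\widetilde s}$.

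Combining the two displays gives $\left\| p_{j,m} \right\|_{H^{q^\prime}_s}\lesssim 2^{-(j+m)N}\cdot 2^{jN}=2^{-mN}=2^{-m([s]+1)}$, and the same for $H^2_{\widetilde s}$, which is the claim. I expect the only delicate point to be the Bernstein smoothing step: one must extract exactly $N=[s]+1$ derivatives, check that the cutoff operators $\psi_\alpha(D/\lambda)$ are bounded uniformly in $j,m$ on the weighted Sobolev spaces, and verify that the gain $(2^{j+m})^{-N}$ precisely absorbs the growth $2^{jN}$ produced by differentiating the band-limited factors. The remainder is bookkeeping with the chain rule together with the multiplication-algebra and Meyer estimates already available.
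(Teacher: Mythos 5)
Your proposal is correct, and its overall skeleton is the same as the paper's: both reduce the claim to the intermediate inequality
\begin{equation*}
\left\| p_{j,m} \right\|_{ H^{q^\prime}_s }
\lesssim
2^{-(j+m)([s]+1)} \sum_{|\alpha| = [s]+1} \left\| \partial^\alpha m_j \right\|_{ H^{q^\prime}_s },
\end{equation*}
and then invoke the derivative bound $\| \partial^\alpha m_j \|_{ H^{q^\prime}_s } \lesssim 2^{j([s]+1)}$, which the paper isolates as a separate lemma (Lemma \ref{deriv mj}) and proves exactly as you do: Fa\`a di Bruno, the multiplication algebra property of Proposition \ref{alge sob}, the Bernstein-type bound $\| \partial^{\beta} f_{j,t} \|_{ H^{q^\prime}_s } \lesssim 2^{j|\beta|} \| f \|_{ \mathcal{F} L^q_s }$ for the band-limited factors, and Theorem A with Remark \ref{explicit meyer} for $G^{(1+r)}(f_{j,t})$, using precisely the vanishing $G^{(2)}(0) = \cdots = G^{([s]+2)}(0) = 0$ as you note. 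The one genuine difference is how the factor $2^{-(j+m)([s]+1)}$ is extracted. The paper writes $p_{j,m}$ as a convolution with the dilated kernel $\check\psi(C \cdot 2^{j+m} \cdot)$, subtracts the Taylor polynomial of $m_j$ of order $[s]+1$ (legitimate because $\check\psi$ has vanishing moments of all orders, $\psi$ being supported away from the origin), and estimates the integral remainder via Minkowski's inequality. You instead divide on the symbol side, writing $\psi(\eta) = \sum_{|\alpha|=[s]+1} (i\eta)^\alpha \psi_\alpha(\eta)$ with $\psi_\alpha \in C^\infty_0$ supported in the annulus, so that $\psi(D/\lambda) = \lambda^{-([s]+1)} \sum_\alpha \psi_\alpha(D/\lambda) \partial^\alpha$ with the operators $\psi_\alpha(D/\lambda)$ uniformly bounded on $H^{q^\prime}_s$ and $H^2_{\widetilde s}$ by the scale-invariance of $\| \mathcal{F}^{-1}[\psi_\alpha(\cdot/\lambda)] \|_{L^1}$. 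The two devices are equivalent in substance (both exploit that $\psi$ vanishes identically near the origin), but your multiplier identity avoids pointwise Taylor-remainder manipulations and is arguably cleaner; the paper's version keeps everything at the level of explicit kernels. Your bookkeeping of the regularity of $G$ needed ($\|G\|_{C^{[s]+2+r}(\Omega)}$ versus the paper's $\|G\|_{C^{\mu+[s]+3}(\Omega)}$) is off by one derivative, but this is immaterial since $G \in C^\infty(\mathbb{R})$ and $\Omega$ is compact.
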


To prove Lemma \ref{lem pkm}, we prepare the following:

\begin{lemma}
\label{deriv mj}
Let $1 < q \leq \infty$, $s > n/q^\prime$ and $ n/2 < \widetilde s < n/2 + (s - n/q^\prime)$,
and let $\alpha \in \mathbb{Z}_+^n$ satisfy that $| \alpha | = [s]+1$.
Suppose that $f \in \mathcal{F} L^q_s (\mathbb{R}^n)$
and all the assumptions of $G$ are the same as in Proposition \ref{thm on fou reduction}.
Then, we have
\begin{align*}
\| \partial^\alpha m_{j}  \|_{ H^{q^\prime}_s } & \lesssim 2^{j ([s]+1)} 
\quad\textrm{if}\quad
1 < q \leq 2;
\\
\| \partial^\alpha m_{j}  \|_{ H^{2}_{\widetilde s} } & \lesssim 2^{j ([s]+1)} 
\quad\textrm{if}\quad
2 < q \leq \infty
\end{align*}
for any $j \in \mathbb{Z}_+$.
Here, the implicit constants are independent of $j \in \mathbb{Z}_+$.
\end{lemma}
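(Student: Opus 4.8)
The plan is to differentiate the integral representation \eqref{mk} of $m_j$ directly, expand the resulting derivative of a composition by the Fa\`a di Bruno formula, and then convert every derivative falling on the argument into a power of $2^j$ by exploiting its spectral localization. To unify the two cases, set $f_{j,t} = S_j f + t\Delta_j f$ and write $p = q^\prime$, $\sigma = s$ when $1 < q \leq 2$, and $p = 2$, $\sigma = \widetilde s$ when $2 < q \leq \infty$, so that the target space is $H^p_\sigma$ with $\sigma > n/p$ and $1 < p < \infty$. Then $H^p_\sigma$ is a multiplication algebra by Proposition \ref{alge sob}, and, exactly as in the proof of Lemma \ref{lem qk} (using Proposition \ref{fou to sob} in the case $q>2$), one has the uniform bounds $\| f_{j,t} \|_{L^\infty} \lesssim \| f \|_{\mathcal{F}L^q_s}$ and $\| f_{j,t} \|_{H^p_\sigma} \lesssim \| f \|_{\mathcal{F}L^q_s}$ for all $j \in \mathbb{Z}_+$ and $t \in [0,1]$.

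First I would differentiate under the integral sign, $\partial^\alpha m_j = \int_0^1 \partial^\alpha \big[ G^{(1)}(f_{j,t}) \big]\, dt$, and apply the Fa\`a di Bruno formula to express $\partial^\alpha \big[ G^{(1)}(f_{j,t}) \big]$ as a finite linear combination of terms of the form $G^{(1+r)}(f_{j,t}) \prod_{i=1}^{r} \partial^{\beta_i} f_{j,t}$, where $1 \leq r \leq |\alpha|$, each $|\beta_i| \geq 1$, and $\beta_1 + \cdots + \beta_r = \alpha$. Since $|\alpha| = [s]+1$, the order $1+r$ ranges only over $2, \dots, [s]+2$, so the hypothesis $G(0) = G^{(1)}(0) = \cdots = G^{([s]+2)}(0) = 0$ guarantees $G^{(1+r)}(0) = 0$ for every term that occurs. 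This is precisely what allows me to invoke Theorem A in the explicit form of Remark \ref{explicit meyer}, repeating verbatim the argument of Lemma \ref{lem qk}, to obtain $\big\| G^{(1+r)}(f_{j,t}) \big\|_{H^p_\sigma} \lesssim 1$ uniformly in $j$ and $t$.

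The gain of the factor $2^{j([s]+1)}$ then comes from the spectral localization of $f_{j,t}$: because $S_j f = \varphi_j(D)f$ and $\Delta_j f = \psi_j(D)f$ are supported in frequency in a ball $\{ |\xi| \lesssim 2^j \}$, so is each $\partial^{\beta_i} f_{j,t}$, and Bernstein's inequality gives $\big\| \partial^{\beta_i} f_{j,t} \big\|_{H^p_\sigma} \lesssim 2^{j |\beta_i|} \| f_{j,t} \|_{H^p_\sigma} \lesssim 2^{j |\beta_i|}$. Using the algebra property of $H^p_\sigma$ on each Fa\`a di Bruno term, I obtain $\big\| G^{(1+r)}(f_{j,t}) \prod_{i=1}^{r} \partial^{\beta_i} f_{j,t} \big\|_{H^p_\sigma} \lesssim \big\| G^{(1+r)}(f_{j,t}) \big\|_{H^p_\sigma} \prod_{i=1}^{r} \big\| \partial^{\beta_i} f_{j,t} \big\|_{H^p_\sigma} \lesssim 2^{j(|\beta_1| + \cdots + |\beta_r|)} = 2^{j|\alpha|} = 2^{j([s]+1)}$. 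Summing the finitely many terms and integrating in $t$ over $[0,1]$ yields the claimed estimate in both ranges of $q$.

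The main obstacle is organizational rather than conceptual: one must carry out the Fa\`a di Bruno bookkeeping carefully so that the derivatives distributed onto the factors $f_{j,t}$ account for exactly $2^{j([s]+1)}$, and simultaneously verify that every composition factor $G^{(1+r)}(f_{j,t})$ remains within the reach of Theorem A. The latter is where the precise vanishing order $[s]+2$ enters essentially: lowering it by even one would leave a term containing $G^{([s]+2)}(f_{j,t})$ with $G^{([s]+2)}(0) \neq 0$, which in general fails to lie in $H^p_\sigma$, so that the uniform bound $\lesssim 1$ on the composition factor would break down.
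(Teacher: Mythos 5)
Your proposal is correct and follows essentially the same route as the paper's proof: differentiation under the $t$-integral, a Fa\`a di Bruno expansion combined with the multiplication algebra property of $H^{q^\prime}_s$ (resp.\ $H^2_{\widetilde s}$), the gain $2^{j|\beta_i|}$ from the frequency localization of $f_{j,t}$ (the paper phrases your Bernstein step as $\| \mathcal{F}^{-1}[\xi^{\beta}\varphi_j] \|_{L^1} + \| \mathcal{F}^{-1}[\xi^{\beta}\psi_j] \|_{L^1} \lesssim 2^{j|\beta|}$ plus Young's inequality), and Theorem A with Remark \ref{explicit meyer} applied to $G^{(\mu+1)}$, exactly where the vanishing conditions up to order $[s]+2$ are consumed. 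Your closing observation on why the vanishing order cannot be lowered also matches the paper's use of the hypotheses.
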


\begin{proof}[\bf Proof of Lemma \ref{deriv mj}]
We first consider the case $1 < q \leq 2$.
Set $f_{j,t} = S_j f + t \Delta_j f $. Then we have by Proposition \ref{alge sob}
\begin{align*}
\| \partial^\alpha m_{j} \|_{ H^{q^\prime}_s }
&\leq 
\int_0^1 \left\| \partial^\alpha [ G^{(1)} \left( f_{j,t} \right) ] \right\|_{ H^{q^\prime}_s } d t
\\
&\lesssim 
\sum_{ \mu = 1 }^{ |\alpha| } \sum_{\alpha_1 + \cdots + \alpha_\mu = \alpha }
\int_0^1
\left\| G^{ ( \mu +1 ) } ( f_{j,t} ) \right\|_{ H^{q^\prime}_s }
\cdot \left\| \partial^{\alpha_1} f_{j,t} \right\|_{ H^{q^\prime}_s }
\cdots \left\| \partial^{\alpha_\mu}  f_{j,t} \right\|_{ H^{q^\prime}_s }
d t,
\end{align*}
where $|\alpha|=[s]+1$. 
Observe that for $\beta \in \mathbb{Z}_+^n$
\begin{align*}
\left\| \partial^{\beta} f_{j,t} \right\|_{ H^{q^\prime}_s } 
\lesssim
\left( \| \mathcal{F}^{-1} [ \xi^{\beta} \cdot \varphi_j ] \|_{L^1} 
+ t \| \mathcal{F}^{-1} [ \xi^{\beta} \cdot \psi_j ] \|_{L^1} \right)
\| f \|_{ H^{q^\prime}_s }
\lesssim
2^{ j | \beta | } \| f \|_{ \mathcal{F} L^q_s },
\end{align*}
which also means that $ f_{j,t} \in H^{q^\prime}_s $ for any $j \in \mathbb{Z}_+$ and any $t \in [0,1]$. 
Therefore, by using Theorem A and Remark \ref{explicit meyer} 
together with the assumptions $G \in C^\infty(\mathbb{R})$ and $ G^{ (2) } (0)= \cdots= G^{ ([s]+2) } (0) = 0$,
we have for $\mu = 1, \cdots , [s]+1$
\begin{align*}
\left\| G^{ ( \mu +1 )} (f_{j,t}) \right\|_{H^{q^\prime}_s}
&\lesssim
\| G^{ ( \mu +2 )} \|_{C^{[s]+1} (\Omega)} 
\left(
1
+
\left\| f_{j,t} \right\|_{L^\infty}^{[s]+1}
\right)
\left\| f_{j,t} \right\|_{H^{q^\prime}_s}
\\
&\lesssim
\| G \|_{C^{\mu+[s]+3} (\Omega)} 
\left( 1 + \| f \|_{L^\infty}^{[s]+1}\right)
\left\| f \right\|_{ \mathcal{F} L^q_s },
\end{align*}
where
$\Omega = \{ t : |t| \lesssim \| f \|_{L^\infty} \}$.
Note that the last quantity makes sense surely 
since $f \in \mathcal{F} L^q_s (\mathbb{R}^n) \hookrightarrow L^\infty (\mathbb{R}^n)$ for $s > n/q^\prime$
and $G \in C^\infty (\mathbb{R} )$ is considered on the closed and bounded domain $\Omega$. 
Hence, we obtain
\begin{align*}
\| \partial^\alpha m_{j} \|_{ H^{q^\prime}_s }
&\lesssim
\sum_{ \mu = 1 }^{ [s]+1 } \sum_{\alpha_1 + \cdots + \alpha_\mu = \alpha }
\left( 2^{ j | \alpha_1 | }  \| f \|_{ \mathcal{F} L^q_s } \right) 
\cdots 
\left( 2^{ j | \alpha_\mu | } \| f \|_{ \mathcal{F} L^q_s } \right)
\lesssim
2^{ j ([s]+1) }
,
\end{align*}
which completes the proof for $1 < q \leq 2$.

We next consider the case $2 < q \leq \infty$. 
Repeating the same lines as above, 
since we already know from Proposition \ref{fou to sob} 
that $f \in \mathcal{F}L^q_s \rn \hookrightarrow H^2_{\widetilde s} \rn \hookrightarrow L^\infty \rn$,
we have for $\beta \in \mathbb{Z}_+^n$
\begin{align*}
\left\| \partial^{\beta} f_{j,t} \right\|_{ H^2_{\widetilde s} } 
\lesssim
\left( \| \mathcal{F}^{-1} [ \xi^{\beta} \cdot \varphi_j ] \|_{L^1} 
+ t \| \mathcal{F}^{-1} [ \xi^{\beta} \cdot \psi_j ] \|_{L^1} \right)
\| f \|_{ H^2_{\widetilde s} }
\lesssim
2^{ j | \beta | } \| f \|_{ \mathcal{F} L^q_s },
\end{align*}
and by Theorem A and Remark \ref{explicit meyer} for $\mu = 1, \cdots , [s]+1$
\begin{align*}
\left\| G^{ ( \mu +1 )} (f_{j,t}) \right\|_{ H^2_{\widetilde s} }
&\lesssim
\| G^{ ( \mu +2 )} \|_{C^{[\widetilde s]+1} (\Omega)} 
\left(
1
+
\left\| f_{j,t} \right\|_{L^\infty}^{[\widetilde s]+1}
\right)
\left\| f_{j,t} \right\|_{ H^2_{\widetilde s} }
\\
&\lesssim
\| G \|_{C^{\mu+[\widetilde s]+3} (\Omega)} 
\left( 1 + \| f \|_{L^\infty}^{[\widetilde s]+1}\right)
\left\| f \right\|_{ \mathcal{F} L^q_s }.
\end{align*}
Hence, we obtain 
$
\| \partial^\alpha m_{j} \|_{ H^2_{\widetilde s} }
\lesssim
2^{ j ([s]+1) }
$
for $2 < q \leq \infty$.
\end{proof}

\begin{proof}[\bf Proof of Lemma \ref{lem pkm}]
By the moment condition of $\psi$ and a Taylor expansion, we have
\begin{align*}
p_{j,m} (x)
&= 
C^n \cdot 2^{ ( j+m)n } 
\int_{ \mathbb{R}^n }
\check \psi (C \cdot 2^{j+m} y ) \cdot m_{j} ( x - y ) d y
\\
&=
C^n \cdot 2^{ ( j+m)n } 
\int_{ \mathbb{R}^n } \check \psi (C \cdot 2^{j+m} y ) \left\{ m_{j} ( x - y ) 
- \sum_{|\alpha| < M} \frac{\left( - y \right)^\alpha}{\alpha !} ( \partial^\alpha m_{j} ) (x) \right\} d y
\\
&=
C^n \cdot 2^{ ( j+m)n } 
\int_{ \mathbb{R}^n } \check \psi (C \cdot 2^{j+m} y ) \cdot 
\left\{ 
M\sum_{|\alpha| = M } \frac{\left( - y \right)^\alpha}{\alpha !} 
\int_0^1 (1-t)^{M-1} \cdot ( \partial^\alpha m_{j} ) (x-ty) d t 
\right\} d y,
\end{align*}
where $M = [s]+1$. 
Taking the $H^{q^\prime}_s$-norm to the both sides, we have
\begin{align*}
\| p_{j,m} \|_{ H^{q^\prime}_s }
&\lesssim
2^{ ( j+m ) n } \int_{ \mathbb{R}^n } | \check \psi (C \cdot 2^{j+m} y ) | \cdot 
| y |^{[s]+1} 
\left\{ 
\sum_{|\alpha| = [s]+1 } \int_0^1 \| ( \partial^\alpha m_{j} ) (x-ty) \|_{ H^{q^\prime}_s (\mathbb{R}^n_x) } d t
\right\} d y
\\
&\sim
2^{ - ( j+m )( [s]+1 ) } \left( \int_{ \mathbb{R}^n } | \check \psi ( y ) | \cdot | y |^{[s]+1} d y \right) \sum_{|\alpha| = [s]+1 } \| \partial^\alpha m_{j} \|_{ H^{q^\prime}_s }
\\
&\sim
2^{ - ( j+m )( [s]+1) } \sum_{|\alpha| = [s]+1 } \| \partial^\alpha m_{j}  \|_{ H^{q^\prime}_s }.
\end{align*}
Since we have
$\| \partial^\alpha m_{j}  \|_{ H^{q^\prime}_s } \lesssim 2^{j([s]+1)}$ for $1 < q \leq 2$
by Lemma \ref{deriv mj},
we obtain
$\| p_{j,m} \|_{ H^{q^\prime}_s } \lesssim 2^{ - m ([s]+1) }$.
By the same manner as above, we also have 
$\| p_{j,m} \|_{ H^2_{\widetilde s} } \lesssim 2^{ - m ([s]+1) }$ for $2 < q \leq \infty$.
\end{proof}

We are now ready to prove Proposition \ref{thm on fou reduction}.

\begin{proof}[\bf Proof of Proposition \ref{thm on fou reduction}]
We recall the alternative form of $G(f)$ given in \eqref{final decom f}, that is,
\begin{equation*}
G(f) = G(S_0 f) 
+ \sum_{j = 0}^\infty q_j \cdot \Delta_j f 
+ \sum_{j = 0}^\infty \sum_{m = 0}^\infty p_{j,m} \cdot \Delta_j f ,
\end{equation*}
and prove that the function $G(f)$ belongs to $\mathcal{F} L^q_s$, 
which will be archived by three steps. 
In the first and second steps, we consider the second and third summations, 
and then consider $G(S_0 f)$ in the last step.

\noindent{\bf Step 1:}
We first consider the case $q < \infty$.
Taking the $\mathcal{F} L^q_s$-norm to the second summation in \eqref{final decom f}, we have
\begin{equation} \label{step1-1}
\left\| \langle \cdot \rangle^s \sum_{j = 0}^\infty 
\mathcal{F} \left[ q_j \cdot \Delta_j f \right]
\right\|_{L^q}
= 
\left( \sum_{\ell = 0}^\infty \int_{\Omega_\ell} \langle \xi \rangle^{sq} \left| \sum_{j = 0}^\infty \mathcal{F} \left[ q_j \cdot \Delta_j f \right] (\xi) \right|^q d\xi\right)^{1/q},
\end{equation}
where $\Omega_\ell = \{ \xi : 2^\ell < | \xi | \leq 2^{\ell+1} \}$ if $\ell \neq 0$ 
and $\Omega_0 = \{ \xi : | \xi | \leq 2 \}$.
We remark that 
\begin{equation*}
\supp \mathcal{F} \left[ q_j \cdot \Delta_j f  \right] \subset \{ \xi : | \xi | \leq C \cdot 2^{j+1} \},
\end{equation*}
since $\mathcal{F} \left[ q_j \cdot \Delta_j f  \right] = \left[ \varphi (\frac{\cdot}{C \cdot2^j}) \widehat {m_{j}} \right] \ast [ \psi_j \widehat f ] $. This means that on the domain $\Omega_\ell$, $\mathcal{F} \left[ q_j \cdot \Delta_j f  \right]$ always vanishes unless $j \geq \ell - N$ ($j \geq 0$ if $\ell = 0, \cdots, N$), 
where $N$ is a constant which depends only on $C\gg1$ (roughly, $2^N \sim C$). 
Hence, the right hand side of \eqref{step1-1} is equal to
\begin{equation} \label{step1-2}
\left( \sum_{\ell = 0}^\infty \int_{\Omega_\ell} \langle \xi \rangle^{sq} \left| \sum_{j = \ell -N}^\infty \mathcal{F} \left[ q_j \cdot \Delta_j f \right] (\xi) \right|^q d\xi\right)^{1/q},
\end{equation}
where the inner summation should be read as $\sum_{j = 0}^\infty$ if $\ell = 0,\cdots,N$. 
Then, using the H\"older inequality to the inner summation, we have
\begin{align}
\label{step1-3}
\begin{split}
\eqref{step1-2}
&\lesssim 
\left( \sum_{\ell = 0}^\infty \int_{\Omega_\ell} 2^{\ell sq} \left( \sum_{j = \ell - N}^\infty 2^{jsq} \left| \mathcal{F} \left[ q_j \cdot \Delta_j f \right] (\xi) \right|^q \right) \cdot \left( \sum_{j = \ell - N}^\infty 2^{-jsq^\prime} \right)^{q/q^\prime} d\xi\right)^{1/q}
\\
&\lesssim
\left( \sum_{\ell = 0}^\infty \int_{\Omega_\ell} \sum_{j = \ell -N}^\infty 2^{jsq} \left| \mathcal{F} \left[ q_j \cdot \Delta_j f \right] (\xi) \right|^q d\xi\right)^{1/q}
\\
&\lesssim
\left(\sum_{j = 0}^\infty 2^{jsq} \int_{\mathbb{R}^n} \left| \mathcal{F} \left[ q_j \cdot \Delta_j f \right] (\xi) \right|^q d\xi\right)^{1/q}
.
\end{split}
\end{align}
Here, in the last inequality, we used the fact that $\mathbb{R}^n = \bigcup_{\ell = 0}^\infty \Omega_\ell$.
Now, we observe that 
\begin{equation*}
\| \mathcal{F} \left[ q_j \cdot \Delta_j f \right] \|_{L^q} 
= \left\| \widetilde{q_j} ( D ) \left[ \psi_j \cdot \widehat f  \right] \right\|_{L^q},
\end{equation*}
where $\widetilde{q_j} ( x ) = q_j (- x)$. 
Then, we see that the last quantity of \eqref{step1-3} is equal to
\begin{equation} \label{step1-4}
\left(\sum_{j = 0}^\infty 2^{jsq} \left\| \widetilde{q_j} ( D ) \left[ \psi_j \cdot \widehat f  \right] \right\|_{L^q} ^q\right)^{1/q}.
\end{equation}
Apply to \eqref{step1-4} Proposition \ref{hahn} with $p = q^\prime$ for $4/3 \leq q \leq 2$ 
and with $p=2$ for $2 < q < \infty$.
Here, we note that the assumption $4/3 \leq q \leq 2$
is used to assure the conditions $2q^\prime/(q^\prime+2) \leq q \leq 2q^\prime/(q^\prime-2)$ and $q^\prime \geq 2$ in Proposition \ref{hahn}.
Then, we have
\begin{equation*} \label{step1-5}
\eqref{step1-4}
\lesssim
\left\{
\begin{array}{ll}
\displaystyle{
\left(\sum_{j = 0}^\infty 2^{jsq} \left\| q_j \right\|_{ H^{q^\prime}_s }^q \left\| \psi_j \cdot \widehat f  \right\|_{L^q}^q\right)^{1/q}
} 
& 
\quad\textrm{if}\quad
4/3 \leq q \leq 2
; 
\\
\displaystyle{
\left(\sum_{j = 0}^\infty 2^{jsq} \left\| q_j \right\|_{ H^2_{\widetilde s} }^q \left\| \psi_j \cdot \widehat f  \right\|_{L^q}^q\right)^{1/q}
} 
& 
\quad\textrm{if}\quad
2 < q < \infty,
\end{array}
\right.
\end{equation*}
where $\widetilde s$ is the number satisfying that $ n/2 < \widetilde s < n/2 + (s - n/q^\prime)$. 
Thus, we obtain from Lemma \ref{lem qk}
\begin{equation*} \label{step1-5-2}
\eqref{step1-4}
\lesssim
\left(\sum_{j = 0}^\infty 2^{jsq} \left\| \psi_j \cdot \widehat f  \right\|_{L^q}^q\right)^{1/q} 
.
\end{equation*}
Since it follows that $\sum_{j = 0}^\infty | \psi_j |^q \lesssim 1$ (if $q < \infty$)
and $2^{j} \sim \langle \xi \rangle$ on the support of $\psi_j$,
we realize that
\begin{align*}
\left(\sum_{j = 0}^\infty 2^{jsq} \left\| \psi_j \cdot \widehat f  \right\|_{L^q}^q\right)^{1/q} 
\sim
\left(\sum_{j = 0}^\infty \left\| \psi_j \cdot \langle \xi \rangle^s \widehat f  \right\|_{L^q}^q\right)^{1/q} 
\lesssim
\| f \|_{\mathcal{F} L^q_s }
\end{align*} 
for $4/3 \leq q <\infty$, which gives the desired result for the case $4/3 \leq q <\infty$.

We next consider the case $q = \infty$. 
However, this case is obtained similarly to the above. 
In fact, we have
\begin{equation}
\label{q infty sup}
\sup_{\xi \in \mathbb{R}^n } 
\left|
\langle \xi \rangle^s \sum_{j = 0}^\infty 
\mathcal{F} \left[ q_j \cdot \Delta_j f \right] (\xi)
\right|
\lesssim
\sup_{\ell \in \mathbb{Z}_+} 
\left(
\sup_{\xi \in \Omega_\ell } 2^{\ell s}
\sum_{j = \ell -N}^\infty 
\left|
\mathcal{F} \left[ q_j \cdot \Delta_j f \right] (\xi)
\right|
\right)
,
\end{equation}
since each $\Omega_\ell$ is disjoint.
Recalling from Lemma \ref{lem qk} that $\| q_j \|_{ H^2_{\widetilde s} } \lesssim 1$ 
holds independently of $j \in \mathbb{Z}_+$, 
we have by Remark \ref{rem hahn}
\begin{equation*}
\left|
\mathcal{F} \left[ q_j \cdot \Delta_j f \right] (\xi)
\right| 
\lesssim
\| q_j \|_{ H^2_{\widetilde s} } 
\left\| \psi_j \langle \cdot \rangle^{-s} \cdot \langle \cdot \rangle^s \widehat f \right\|_{L^\infty}
\lesssim
2^{-js} \left\| f \right\|_{\mathcal{F} L^\infty_s}.
\end{equation*}
Hence, we obtain 
\begin{equation*}
 2^{\ell s}
\sum_{j = \ell -N}^\infty 
\left|
\mathcal{F} \left[ q_j \cdot \Delta_j f \right] (\xi)
\right|
\lesssim
2^{\ell s}
\sum_{j = \ell -N}^\infty 
2^{-js} \| f \|_{ \mathcal{F} L^\infty_s } 
\lesssim
\| f \|_{ \mathcal{F} L^\infty_s } 
\end{equation*}
for any $\ell \in \mathbb{Z}_+$, 
where all the implicit constants above are independent of $\ell \in \mathbb{Z}_+$.
Substituting this estimate into \eqref{q infty sup}, we have the desired result for the case $q = \infty$.

Combining all the calculations above, we obtain for $4/3 \leq q \leq \infty$
\begin{equation} \label{step1 conc}
\left\| \sum_{j = 0}^\infty q_j \cdot \Delta_j f  \right\|_{\mathcal{F} L^q_s}
<
\infty.
\end{equation}

\noindent{\bf Step 2:}
We first consider the case $q < \infty$.
As in Step 1, we take the $\mathcal{F} L^q_s$-norm
to the third summation in \eqref{final decom f}
and decompose the $L^q$-norm by the dyadic decomposition. 
Then, we have
\begin{equation} \label{step2-1}
\left\| \langle \xi \rangle^s \sum_{j = 0}^\infty \mathcal{F} \left[ \sum_{m = 0}^\infty p_{j,m} \cdot \Delta_j f \right] \right\|_{L^q}
\lesssim
\sum_{m = 0}^\infty \left( \sum_{\ell = 0}^\infty \int_{\Omega_\ell} 2^{\ell sq} \left| \sum_{j = 0}^\infty \mathcal{F} \left[ p_{j,m} \cdot \Delta_j f \right] (\xi) \right|^q d\xi\right)^{1/q},
\end{equation}
where $\Omega_\ell = \{ \xi : 2^\ell < | \xi | \leq 2^{\ell+1} \}$ if $\ell \neq 0$ 
and $\Omega_0 = \{ \xi : | \xi | \leq 2 \}$.
Considering the support of $\mathcal{F} \left[ p_{j,m} \cdot \Delta_j f \right] $,
since we have 
\begin{equation*}
\mathcal{F} p_{j,m} \subset \{ \xi : C \cdot 2^{j+m-1} \leq | \xi | \leq C \cdot 2^{j+m+1} \} 
\quad\textrm{and}\quad 
\mathcal{F} \left[ \Delta_j f \right] \subset \{ \xi : 2^{j-1} \leq | \xi | \leq 2^{j+1} \},
\end{equation*}
we see that
\begin{equation*}
\supp \mathcal{F} \left[ p_{j,m} \cdot \Delta_j f \right]  
\subset \{ \xi : C \cdot 2^{j+m-2} \leq | \xi | \leq C \cdot 2^{j+m+2}\}.
\end{equation*}
This implies that on the domain $\Omega_\ell$, 
the function $\mathcal{F} \left[ p_{j,m} \cdot \Delta_j f \right]$ always vanishes 
unless $j,\ell,m \in \mathbb{Z}_+$ satisfy that $ j + m + N -2 \leq \ell \leq j + m + N + 1$,
where $N$ is the constant which depends only on $C\gg1$.
Put $\Lambda = \{ j \in \mathbb{Z}_+ : \ell - m - N - 1 \leq j \leq \ell - m - N + 2 \}$, 
where this set is read as $\Lambda = \varnothing$ if $\ell - m - N + 2 < 0$.
Then, $0 \leq \#\Lambda \leq 4$.
Hence, the right hand side of \eqref{step2-1} is equivalent to
\begin{align}
\label{step2-2}
\sum_{m = 0}^\infty 
\left( 
	\sum_{\ell = 0}^\infty 
		\int_{\Omega_\ell} 
			\sum_{j \in \Lambda} 2^{(j+m)sq} 
			\left| \mathcal{F} \left[ p_{j,m} \cdot \Delta_j f \right] (\xi) \right|^q 
		d\xi
\right)^{1/q} .
\end{align}
Then, we have by the Fubini-Tonelli theorem
\begin{align}
\label{step22}
\begin{split}
\eqref{step2-2}
&\leq
\sum_{m = 0}^\infty 2^{ms} \left( \int_{\mathbb{R}^n} \sum_{j =0}^\infty 2^{jsq} \left| \mathcal{F} \left[ p_{j,m} \cdot \Delta_j f \right] (\xi) \right|^q d\xi\right)^{1/q} 
\\
&=
\sum_{m = 0}^\infty 2^{ms} \left( \sum_{j =0}^\infty 2^{jsq} \left\| \mathcal{F} \left[ p_{j,m} \cdot \Delta_j f \right] \right\|_{L^q}^q \right)^{1/q}
.
\end{split}
\end{align}
Using the identity
$\| \mathcal{F} [ p_{j,m} \cdot \Delta_j f ] \|_{L^q} 
= 
\| \widetilde{ p_{j,m} } ( D ) [ \psi_j \cdot \widehat f ] \|_{L^q}
$,
where $\widetilde{ p_{j,m} } ( x ) = p_{j,m} (- x)$, 
we see that the last quantity of \eqref{step22} is equal to
\begin{align}
\label{step2-3}
\sum_{m = 0}^\infty 2^{ms} \left( \sum_{j = 0}^\infty 2^{jsq} \left\| \widetilde{ p_{j,m} } ( D ) \left[ \psi_j \cdot \widehat f  \right] \right\|_{L^q}^q \right)^{1/q}
.
\end{align}
As in Step 1, we have
\begin{equation*} \label{step1-5}
\eqref{step1-4}
\lesssim
\left\{
\begin{array}{ll}
\displaystyle{
\sum_{m = 0}^\infty 2^{ms} \left( \sum_{j = 0}^\infty 2^{jsq} \left\| p_{j,m} \right\|_{H^{q^\prime}_s}^q \left\| \psi_j \cdot \widehat f \right\|_{L^q}^q \right)^{1/q}
} 
& 
\quad\textrm{if}\quad
4/3 \leq q \leq 2
; 
\\
\displaystyle{
\sum_{m = 0}^\infty 2^{ms} \left( \sum_{j = 0}^\infty 2^{jsq} \left\| p_{j,m} \right\|_{ H^2_{\widetilde s} }^q \left\| \psi_j \cdot \widehat f \right\|_{L^q}^q \right)^{1/q}
} 
& 
\quad\textrm{if}\quad
2 < q < \infty,
\end{array}
\right.
\end{equation*}
for $ n/2 < \widetilde s < n/2 + (s - n/q^\prime)$.
Hence, recalling the properties that $\sum_{j = 0}^\infty | \psi_j |^q \lesssim 1$ (if $q < \infty$)
and $2^{j} \sim \langle \xi \rangle$ on $\supp \psi_j$,
we have by Lemma \ref{lem pkm} 
\begin{align*}
\eqref{step2-3}
\lesssim
\sum_{m = 0}^\infty 2^{ms} \cdot 2^{-m([s]+1)} \left( \sum_{j = 0}^\infty 2^{jsq}\left\| \psi_j \cdot \widehat f \right\|_{L^q}^q \right)^{1/q}
\lesssim
\| f \|_{\mathcal{F} L^q_s}
\end{align*}
for $4/3 \leq q < \infty$, which gives the desired result for the case $4/3 \leq q <\infty$.

We next consider the case $q = \infty$, which is obtained similarly to the above. 
In fact, we have
\begin{equation*}
\sup_{\xi \in \mathbb{R}^n} 
	\left| 
		\langle \xi \rangle^s \sum_{j = 0}^\infty \mathcal{F} 
			\left[ 
			\sum_{m = 0}^\infty p_{j,m} \cdot \Delta_j f 
			\right] (\xi) 
	\right|
\lesssim
\sup_{\ell \in \mathbb{Z}_+}
\left(
\sup_{\xi \in \Omega_\ell} 
\sum_{m = 0}^\infty 
	2^{\ell s} 
		\sum_{j \in \Lambda }
		\left| \mathcal{F} 
			\left[ p_{j,m} \cdot \Delta_j f \right] (\xi) 
		\right| 
\right)
\end{equation*}
(see above for the definition of the sets $\Omega_\ell$ and $\Lambda$).
Recalling from Lemma \ref{lem pkm} that 
$ \left\| p_{j,m} \right\|_{ H^{2}_{\widetilde s} } \lesssim 2^{-m([s]+1)}$ 
holds independently of $j,m \in \mathbb{Z}_+$ and following the same lines as in Step 1,
we have
\begin{equation*}
\left|
\mathcal{F} \left[ p_{j,m} \cdot \Delta_j f \right] (\xi)
\right| 
\lesssim
\|  p_{j,m}  \|_{ H^2_{\widetilde s} } 
\left\| \psi_j \widehat f \right\|_{L^\infty}
\lesssim
2^{-m([s]+1)} \cdot 2^{-js} \| f \|_{ \mathcal{F} L^\infty_s } 
.
\end{equation*}
Hence, we obtain
\begin{align*}
\sum_{m = 0}^\infty 2^{\ell s} 
	\sum_{j \in \Lambda }
		\left| \mathcal{F} 
			\left[ p_{j,m} \cdot \Delta_j f \right] (\xi) 
		\right| 
\lesssim
\sum_{m = 0}^\infty 
	2^{-m([s]+1)} \cdot 2^{\ell s} 
		\sum_{j \in \Lambda }
		2^{-js} \| f \|_{ \mathcal{F} L^\infty_s } 
\sim
\| f \|_{ \mathcal{F} L^\infty_s } 
\end{align*}
for any $\ell \in \mathbb{Z}_+$.
This gives the desired result for the case $q = \infty$.

Combining all the calculations above, we obtain for $4/3 \leq q \leq \infty$
\begin{equation} \label{step2 conc}
\left\|  \sum_{j = 0}^\infty \sum_{m = 0}^\infty p_{j,m} \cdot \Delta_j f  \right\|_{\mathcal{F} L^q_s}
<
\infty.
\end{equation}

\noindent{\bf Step 3:}
Lastly, we prove that $G(S_0 f) \in \mathcal{F} L^q_s$. 
Observe that
\begin{align*} 
G(S_0 f) 
= 
\int_0^1 G^{(1)} (t \cdot S_0 f) d t \cdot S_0 f
=
m_f \cdot S_0 f,
\end{align*}
where $m_f = \int_0^1 G^{(1)} (t \cdot S_0 f) d t$.
Then, since $\mathcal{F} L^q_r \hookrightarrow \mathcal{F} L^q_s$ for $r \geq s$ 
and $\langle \xi \rangle^r \lesssim 1 + | \xi_1 |^{r} \cdots + | \xi_n |^{r}$ for $r \geq 0$, 
we have by Proposition \ref{hahn} for $4/3 \leq q \leq 2$
\begin{align*}
\left\| \langle \xi \rangle^s \mathcal{F} \left[ G(S_0 f) \right] \right\|_{ L^q } 
&\lesssim
\left\| \mathcal{F} \left[ m_f \cdot S_0 f \right] \right\|_{ L^q } 
+
\sum_{ \ell = 1 }^n 
\left\| \mathcal{F} \left[ \partial_\ell^{[s]+1}  \left( m_f \cdot S_0 f \right) \right] \right\|_{ L^q } 
\\
&\lesssim
\left\| \widetilde{m_f} (D) \left[ \varphi \cdot \widehat f \right] \right\|_{ L^q } 
+
\sum_{ \ell = 1 }^n \sum_{\mu =0}^{[s]+1}
\left\| \widetilde{ \partial_\ell^{\mu} m_f } (D) \left[ \xi_\ell^{ [s]+1 - \mu }\varphi \cdot \widehat f \right] \right\|_{ L^q } 
\\
&\lesssim 
\left\| m_f \right\|_{H_s^{q^\prime}}
\left\| \varphi \cdot \widehat f \right\|_{ L^q } 
+
\sum_{ \ell = 1 }^n \sum_{\mu =0}^{[s]+1}
\left\| \partial_\ell^{\mu} m_f \right\|_{H_s^{q^\prime}}
\left\| \xi_\ell^{ [s]+1 - \mu }\varphi \cdot \widehat f \right\|_{ L^q }
\\
&\lesssim 
\left\| f \right\|_{ \mathcal{F} L^q_s } 
\sum_{ \ell = 1 }^n \sum_{\mu =0}^{[s]+1}
\left\| \partial_\ell^{\mu} m_f \right\|_{H_s^{q^\prime}},
\end{align*}
and have for $2 < q \leq \infty$
\begin{align*}
\left\| \langle \xi \rangle^s \mathcal{F} \left[ G(S_0 f) \right] \right\|_{ L^q } 
\lesssim
\left\| f \right\|_{ \mathcal{F} L^q_s }
\sum_{ \ell = 1 }^n \sum_{\mu =0}^{[s]+1}
\left\| \partial_\ell^{\mu} m_f \right\|_{H^2_{\widetilde s}},
\end{align*}
where we used the notation $ \widetilde{ h } (\xi) = h (- \xi)$.
Moreover, as in the proof of Lemma \ref{deriv mj}, 
Theorem A yields that for $\mu = 0,1, \cdots, [s]+1$
\begin{align*}
\left\| \partial_\ell^{\mu} m_f \right\|_{H_s^{q^\prime}}
&\lesssim
\sum_{ \nu = 0 }^{ \mu }
\left\| f \right\|_{ \mathcal{F}L^q_s }^\nu
<
\infty 
\quad\textrm{if}\quad
1 < q \leq 2; 
\\
\left\| \partial_\ell^{\mu} m_f \right\|_{H^2_{\widetilde s}}
&\lesssim
\sum_{ \nu = 0 }^{ \mu }
\left\| f \right\|_{ \mathcal{F}L^q_s }^\nu
<
\infty 
\quad\textrm{if}\quad
2 < q \leq \infty,
\end{align*}
since the assumption $f \in \mathcal{F} L^q_s$ with $s > n/q^\prime$ gives that
for $\beta \in \mathbb{Z}^n_+$,
$
\| \partial^{\beta} (S_0 f ) \|_{ H^{q^\prime}_s } 
\lesssim
\| f \|_{ \mathcal{F} L^q_s }
$
if $1 < q \leq 2$, and
$ 
\| \partial^{\beta} (S_0 f ) \|_{ H^2_{\widetilde s} } 
\lesssim
\| f \|_{ \mathcal{F} L^q_s }
$
if $2 < q \leq \infty$.
Hence, we obtain $\| G(S_0 f) \|_{ \mathcal{F} L^q_s } < \infty$.

By Steps 1-3, we conclude that $G(f) \in \mathcal{F} L^q_s$ if $f \in \mathcal{F} L^q_s$.
\end{proof}

Now, all the preparations are completed, so that we give the proof of Theorem \ref{thm on fou}.

\begin{proof}[\bf Proof of Theorem \ref{thm on fou}] 
As is stated at the beginning of this section, 
$F(f)$ with $F \in C^\infty (\mathbb{R})$ and $F(0)=0$ is given by
\begin{align*}
F(f) = G(f) + \sum_{k = 1}^N F^{ (k) } (0) \frac{ f^k }{ k! }
\end{align*}
for any $N \geq 0$, 
where $G \in C^\infty (\mathbb{R})$ and $G (0) = G^{ (1) } (0)= \cdots= G^{ (N) } (0) = 0$.
Choosing $N = [s]+2$, 
we obtain from Proposition \ref{thm on fou reduction}
that $G(f) \in \mathcal{F} L^q_s$ if $f \in \mathcal{F} L^q_s $.
The second one is shown by Proposition \ref{alge fou}. 
In fact, since $F \in C^\infty (\mathbb{R})$, we have $| F^{ (k) } (0) | \lesssim 1$, so that it follows that
\begin{align*}
\left\| \sum_{k = 1}^N F^{ (k) } (0) \frac{ f^k }{ k! } \right\|_{ \mathcal{F} L^q_s } 
\lesssim
 \sum_{k = 1}^N \left\| f \right\|_{ \mathcal{F} L^q_s }^k < \infty
\end{align*}
if $f \in \mathcal{F} L^q_s $. Hence, we obtain that $F(f) \in \mathcal{F} L^q_s$ if $f \in \mathcal{F} L^q_s $.
\end{proof}

\section{Proof of Theorem \ref{thm on mod}} \label{sec5}

As in Section \ref{sec4}, $F(f)$ is expressed in the following form:
\begin{align}
\label{Ff on mod}
F(f) = G(f) + \sum_{k = 1}^N F^{ (k) } (0) \frac{ f^k }{ k! },
\end{align}
for any $N \in \mathbb{N}$, where $G (0) = G^{ (1) } (0)= \cdots= G^{ (N) } (0) = 0$. 
Applying a Taylor expansion to $G$, we have
\begin{align}
\label{g on mod}
G(f) = f^N \cdot H(f),
\quad\textrm{where}\quad
H(f) = \frac{ 1 }{ (N-1)! } \int_0^1 ( 1 - \theta )^{ N - 1 } G^{ (N) } ( \theta f ) d \theta.
\end{align}
Note that $H \in C^\infty(\mathbb{R})$ and $H (0) = 0$.
Hence, in this section, we mainly prove that $G(f)$ in \eqref{g on mod} belongs 
to $M^{p,q}_s$ if $f \in M^{p,q}_s$. 
In order to prove this, we prepare the following lemma:

\begin{lemma}
\label{stft for g}
Let $4/3 \leq q \leq \infty$ and $s > n / q^\prime$,
and let $N$ be an arbitrary natural number.
Suppose that $G$ is the function in \eqref{g on mod},
$f \in M^{p,q}_s$
and real-valued functions $\phi, \widetilde \phi \in C^\infty_0 (\mathbb{R}^n)$ 
satisfy that $\widetilde \phi \equiv 1$ on $\supp \phi$.
Then, we have
\begin{align*}
\left\| \langle \xi \rangle^s V_{\phi} \left[ G(f) \right] (x,\xi) \right\|_{ L^q ( \mathbb{R}^n_\xi ) }
\lesssim
\left\| \langle \xi \rangle^s V_{\widetilde \phi} f (x,\xi) \right\|_{ L^q ( \mathbb{R}^n_\xi ) }^N
\end{align*}
for any $x \in \mathbb{R}^n$.
Here, the implicit constant is independent of $x \in \mathbb{R}^n$.
\end{lemma}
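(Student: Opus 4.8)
The plan is to read the inner norm in the statement as a \emph{local} Fourier Lebesgue norm and then transport Theorem~\ref{thm on fou} to it. Write $\phi_x=\phi(\cdot-x)$ and $\widetilde\phi_x=\widetilde\phi(\cdot-x)$. Since $\phi$ and $\widetilde\phi$ are real-valued, $V_\phi[G(f)](x,\xi)=\mathcal{F}[\phi_x\,G(f)](\xi)$ and $V_{\widetilde\phi}f(x,\xi)=\mathcal{F}[\widetilde\phi_x\,f](\xi)$, so that
\[
\left\|\langle\xi\rangle^s V_\phi[G(f)](x,\xi)\right\|_{L^q(\mathbb{R}^n_\xi)}=\left\|\phi_x\,G(f)\right\|_{\mathcal{F}L^q_s},\qquad \left\|\langle\xi\rangle^s V_{\widetilde\phi}f(x,\xi)\right\|_{L^q(\mathbb{R}^n_\xi)}=\left\|\widetilde\phi_x\,f\right\|_{\mathcal{F}L^q_s}.
\]
Hence the assertion is equivalent to the pointwise-in-$x$ bound $\left\|\phi_x\,G(f)\right\|_{\mathcal{F}L^q_s}\lesssim\left\|\widetilde\phi_x\,f\right\|_{\mathcal{F}L^q_s}^N$ with a constant independent of $x$.

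First I would localize the nonlinearity. Set $g=\widetilde\phi_x f$, a compactly supported element of $\mathcal{F}L^q_s$. Because $\widetilde\phi_x\equiv1$ on $\supp\phi_x$, we have $f=g$ on $\supp\phi_x$, so that, using $G(f)=f^N H(f)$ from \eqref{g on mod},
\[
\phi_x\,G(f)=\phi_x\,f^N\,H(f)=\phi_x\,g^N\,H(g).
\]
Since $\mathcal{F}L^q_s$ with $s>n/q'$ is a multiplication algebra (Proposition~\ref{alge fou}), applying the product inequality $N+1$ times gives
\[
\left\|\phi_x\,g^N\,H(g)\right\|_{\mathcal{F}L^q_s}\lesssim\left\|\phi_x\right\|_{\mathcal{F}L^q_s}\left\|H(g)\right\|_{\mathcal{F}L^q_s}\left\|g\right\|_{\mathcal{F}L^q_s}^N.
\]
It then suffices to bound the two prefactors uniformly in $x$, for then the right-hand side is $\lesssim\|g\|_{\mathcal{F}L^q_s}^N=\|\widetilde\phi_x f\|_{\mathcal{F}L^q_s}^N$, as desired.

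The first prefactor is harmless: translation only modulates the Fourier transform, so $\|\phi_x\|_{\mathcal{F}L^q_s}=\|\phi\|_{\mathcal{F}L^q_s}$ is a fixed constant. The real work is the uniform control of $\|H(g)\|_{\mathcal{F}L^q_s}$, and this is where $4/3\le q$ enters. As $H\in C^\infty(\mathbb{R})$ with $H(0)=0$, Theorem~\ref{thm on fou} applies to the compactly supported $g$ and gives $H(g)\in\mathcal{F}L^q_s$; reading its proof quantitatively (via the explicit Meyer estimates of Remark~\ref{explicit meyer}), the resulting bound depends only on $\|g\|_{L^\infty}$ and $\|g\|_{\mathcal{F}L^q_s}$. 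Both are controlled uniformly in $x$: the embeddings $\mathcal{F}L^q_s\hookrightarrow L^\infty$ (for $s>n/q'$) and $M^{p,q}_s\hookrightarrow M^{\infty,q}_s$ (Proposition~\ref{basic emb}) yield
\[
\|g\|_{L^\infty}\lesssim\|g\|_{\mathcal{F}L^q_s}\le\sup_{x}\|\widetilde\phi_x f\|_{\mathcal{F}L^q_s}=\|f\|_{M^{\infty,q}_s}\lesssim\|f\|_{M^{p,q}_s}.
\]
Consequently $\sup_x\|H(g)\|_{\mathcal{F}L^q_s}<\infty$, which finishes the estimate.

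The hard part is precisely this uniformity in $x$. A priori $\|\widetilde\phi_x f\|_{\mathcal{F}L^q_s}$ is merely an $L^p_x$ function of $x$ and may be unbounded, so factoring out $\|H(g)\|_{\mathcal{F}L^q_s}$ would be useless unless one first observes that the localized pieces $\{\widetilde\phi_x f\}_x$ form a bounded family in $\mathcal{F}L^q_s$ — a fact supplied by the elementary embedding $M^{p,q}_s\hookrightarrow M^{\infty,q}_s$. I would therefore take care to record the quantitative dependence hidden in Theorem~\ref{thm on fou} before invoking it, so that all the constants above are manifestly independent of $x$.
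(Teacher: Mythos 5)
Your proof is correct and follows essentially the same route as the paper's: rewrite the short-time Fourier transform as a localized Fourier transform, use $\widetilde\phi(\cdot-x)\equiv 1$ on $\supp\phi(\cdot-x)$ to replace $\phi(\cdot-x)G(f)$ by $\phi(\cdot-x)\,\bigl(\widetilde\phi(\cdot-x)f\bigr)^N H\bigl(\widetilde\phi(\cdot-x)f\bigr)$, apply the $\mathcal{F}L^q_s$ multiplication algebra (Proposition \ref{alge fou}), observe $\|\phi(\cdot-x)\|_{\mathcal{F}L^q_s}$ is constant in $x$, and control $\|H(\widetilde\phi(\cdot-x)f)\|_{\mathcal{F}L^q_s}$ uniformly in $x$ by a quantitative reading of Theorem \ref{thm on fou} (the paper makes the same appeal, remarking that this norm is bounded by a polynomial of the localized norm). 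The one place you genuinely diverge is how the family $\{\widetilde\phi(\cdot-x)f\}_{x}$ is shown to be bounded in $\mathcal{F}L^q_s$: the paper uses the local equivalence $\|\widetilde\phi(\cdot-x)f\|_{\mathcal{F}L^q_s}\sim\|\widetilde\phi(\cdot-x)f\|_{M^{p,q}_s}$ (Lemma \ref{mod sim fou}) together with the algebra property of $M^{p,q}_s$ (Proposition \ref{alge mod}), whereas you use the embedding $M^{p,q}_s\hookrightarrow M^{\infty,q}_s$ (Proposition \ref{basic emb}); both work, and yours is slightly lighter since it avoids Appendix A and Proposition \ref{alge mod} at this step. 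One small correction: your identity $\sup_x\|\widetilde\phi(\cdot-x)f\|_{\mathcal{F}L^q_s}=\|f\|_{M^{\infty,q}_s}$ should be the inequality $\sup_x\|\widetilde\phi(\cdot-x)f\|_{\mathcal{F}L^q_s}\le\|f\|_{M^{\infty,q}_s}$, since the $M^{\infty,q}_s$ norm places $L^\infty_x$ inside $L^q_\xi$ and this mixed norm dominates the reversed order pointwise in $\xi$; the inequality is all you need, so the argument stands.
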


\begin{proof}[\bf Proof of Lemma \ref{stft for g}]
We first observe from \eqref{g on mod}
and the assumption $\widetilde \phi (\cdot - x) \equiv 1$ on $\supp \phi (\cdot - x)$ that
\begin{align*}
V_\phi \left[ G(f) \right] (x,\xi)
&=
\int_{\mathbb{R}^n} e^{-i \xi \cdot t} \ 
{ \phi (t-x) } \cdot G \left( \widetilde \phi (t - x) f (t) \right) dt
\\
&=
\int_{\mathbb{R}^n} e^{-i \xi \cdot t} \ 
{ \phi (t-x) } \cdot \left( \widetilde \phi (t - x) f (t) \right)^N \cdot H \left( \widetilde \phi (t - x) f (t) \right) dt
\\
&= 
\mathcal{F} \left[ 
{ \phi (\cdot-x) } \cdot \left( \widetilde \phi (\cdot - x) f \right)^N \cdot H \left( \widetilde \phi (\cdot - x) f \right)
\right] (\xi).
\end{align*}
Multiplying the weight $\langle \xi \rangle^s$ to the both sides
and taking the $L^q $-norm with respect to the $\xi$-variable,
we have by Proposition \ref{alge fou}
\begin{align*}
\left\| \langle \xi \rangle^s V_{\phi} \left[ G(f) \right] (x,\xi) \right\|_{ L^q ( \mathbb{R}^n_\xi ) }
&=
\left\| \langle \xi \rangle^s \mathcal{F} \left[ 
{ \phi (\cdot-x) } \cdot \left( \widetilde \phi (\cdot - x) f \right)^N \cdot H \left( \widetilde \phi (\cdot - x) f \right)
\right] (\xi) \right\|_{ L^q ( \mathbb{R}^n_\xi ) }
\\
&\lesssim
\left\| \phi (\cdot-x) \right\|_{ \mathcal{F} L^q_s } \cdot
\left\| \widetilde \phi (\cdot - x) f \right\|_{ \mathcal{F} L^q_s }^N \cdot
\left\| H \left( \widetilde \phi (\cdot - x) f \right) \right\|_{ \mathcal{F} L^q_s }.
\end{align*}
It obviously follows that $\left\| \phi (\cdot-x) \right\|_{ \mathcal{F} L^q_s }  \sim 1$ and
$
\| \widetilde \phi (\cdot - x) f \|_{ \mathcal{F} L^q_s } 
=
\| \langle \xi \rangle^s V_{\widetilde \phi} f (x,\xi) \|_{ L^q ( \mathbb{R}^n_\xi ) }
$.
We only consider $\| H ( \widetilde \phi (\cdot - x) f ) \|_{ \mathcal{F} L^q_s }$ to obtain the conclusion.
By Lemma \ref{mod sim fou} and Proposition \ref{alge mod}, we have
\begin{align*}
\left\| \widetilde \phi (\cdot - x) f \right\|_{ \mathcal{F} L^q_s }
\sim
\left\| \widetilde \phi (\cdot - x) f \right\|_{ M_s^{p,q} }
\lesssim
\| \widetilde \phi \|_{ M_s^{p,q} } \cdot
\left\| f \right\|_{ M_s^{p,q} }
<
\infty,
\end{align*}
where the implicit constants are both independent of $x \in \mathbb{R}^n$.
Then, recalling that $H\in C^\infty(\mathbb{R})$ and $H(0) = 0$,
we have $\sup_{x\in\mathbb{R}^n} \| H ( \widetilde \phi (\cdot - x) f ) \|_{ \mathcal{F} L^q_s } < \infty$
by Theorem \ref{thm on fou} if $4/3 \leq q \leq \infty$.
Hence, we obtain
\begin{align*}
\left\| \langle \xi \rangle^s V_{\phi} \left[ G(f) \right] (x,\xi) \right\|_{ L^q ( \mathbb{R}^n_\xi ) }
\lesssim
\left\| \langle \xi \rangle^s V_{\widetilde \phi} f (x,\xi) \right\|_{ L^q ( \mathbb{R}^n_\xi ) }^N.
\end{align*}
Here, recalling all the proofs in Section \ref{sec4}, 
we see that $\| H ( \widetilde \phi (\cdot - x) f ) \|_{ \mathcal{F} L^q_s }$ can be estimated 
by a polynomial of 
$\| \langle \xi \rangle^s V_{\widetilde \phi} f (x,\xi) \|_{ L^q ( \mathbb{R}^n_\xi ) }$.
This implies that the explicit order of the power in the right hand side can be actually taken larger than $N$. 
However, the explicit expression is not important,
since it is sufficient to understand that the order can be chosen arbitrarily large as we want.
Hence, we here omitted the details.
\end{proof}

Now, we are in a position to prove Theorem \ref{thm on mod}.

\begin{proof}[\bf Proof of Theorem \ref{thm on mod}]
We recall the expressions \eqref{Ff on mod} and have by Proposition \ref{alge mod}
\begin{align}
\label{Ff alge}
\| F(f) \|_{ M^{p,q}_s }
\lesssim
\| G(f) \|_{ M^{p,q}_s }
+
\sum_{k = 1}^N \| f \|_{ M^{p,q}_s }^k .
\end{align}
Here, we choose $N \in \mathbb{N}$ such that $N \geq [ \max(p/q, q/p) ] + 1$,
and it should be remarked that we exclude the cases
$p=\infty$ and $q < \infty$, or $p < \infty$ and $q=\infty$
in Theorem \ref{thm on mod},
since such $N$ cannot be taken in those cases.

We first consider $\| G(f) \|_{ M^{p,q}_s }$ for the case $p \leq q$.
Let real-valued functions $\phi, \widetilde \phi \in C^\infty_0 (\mathbb{R}^n)$ 
satisfy that $\widetilde \phi \equiv 1$ on $\supp \phi$.
Then, we have by the Minkowski inequality for integrals and Lemma \ref{stft for g}
\begin{align*}
\left\| G(f) \right\|_{ M^{p,q}_s }
&\lesssim
\left\| \left\| \langle \xi \rangle^s V_\phi \left[ G(f) \right] (x,\xi) \right\|_{ L^q (\mathbb{R}^n_\xi) } \right\|_{ L^p (\mathbb{R}^n_x) }
\\
&\lesssim
\left\| \left\| \langle \xi \rangle^s V_{\widetilde \phi} f (x,\xi) \right\|_{ L^q (\mathbb{R}^n_\xi) }^N \right\|_{ L^p (\mathbb{R}^n_x) }
=
\left\| \left\| \langle \xi \rangle^s V_{\widetilde \phi} f (x,\xi) \right\|_{ L^q (\mathbb{R}^n_\xi) } \right\|_{ L^{Np} (\mathbb{R}^n_x) }^N.
\end{align*}
Since $Np > q \geq p$, 
we have by Proposition \ref{basic emb}
\begin{align*}
\left\| G(f) \right\|_{ M^{p,q}_s }
\lesssim
\left\| \left\| \langle \xi \rangle^s V_{\widetilde \phi} f (x,\xi) \right\|_{ L^{Np} (\mathbb{R}^n_x) } \right\|_{ L^q (\mathbb{R}^n_\xi) }^N
\lesssim
\left\| \left\| \langle \xi \rangle^s V_{\widetilde \phi} f (x,\xi) \right\|_{ L^{p} (\mathbb{R}^n_x) } \right\|_{ L^q (\mathbb{R}^n_\xi) }^N
\sim
\left\| f \right\|_{ M^{p,q}_s }^N.
\end{align*}

We next assume that $q < p < \infty$. 
As above, Proposition \ref{basic emb} and Lemma \ref{stft for g} yield that
\begin{align*}
\left\| G(f) \right\|_{ M^{p,q}_s }
\lesssim
\left\| G(f) \right\|_{ M^{q,q}_s }
\lesssim
\left\| \left\| \langle \xi \rangle^s V_{\widetilde \phi} f (x,\xi) \right\|_{ L^q (\mathbb{R}^n_\xi) } \right\|_{ L^{Nq} (\mathbb{R}^n_x) }^N.
\end{align*}
Since $Nq > p > q$, we use Proposition \ref{basic emb} again and obtain
\begin{align*}
\left\| G(f) \right\|_{ M^{p,q}_s }
\lesssim
\left\| \left\| \langle \xi \rangle^s V_{\widetilde \phi} f (x,\xi) \right\|_{ L^{p} (\mathbb{R}^n_x) } \right\|_{ L^q (\mathbb{R}^n_\xi) } ^N 
\sim
\left\| f \right\|_{ M^{p,q}_s }^N.
\end{align*}
Therefore, for $1 \leq p < \infty$ and $4/3 \leq q < \infty$ (or $p = q = \infty$), 
we have $\| G(f) \|_{ M^{p,q}_s } \lesssim \| f \|_{ M^{p,q}_s }^N$.

Collecting all the estimates above, we obtain $\| F(f) \|_{ M^{p,q}_s } < \infty$. 
This is the desired conclusion.
\end{proof}

\appendix
\section{Local equivalence between modulation and Fourier Lebesgue spaces}

In this section, we state
that modulation spaces are locally equivalent to Fourier Lebesgue spaces.
The corresponding result for $s = 0$ was already proved by Okoudjou \cite[Lemma 1]{okoudjou 2009}, 
and the weighted case is obtained by following the same argument. 
However, for the reader's convenience, we give a proof.

\begin{lemma}
\label{mod sim fou}
Let $1 \leq p,q \leq \infty$ and $s \in \mathbb{R}$. 
Suppose that $\chi \in \mathcal{S} (\mathbb{R}^n) \setminus \{0\}$ 
satisfies that $\supp \chi \subset \{ x : | x - x_0 | \leq R \}$. 
Then, we have
$\left\| \chi \cdot f \right\|_{ M^{p,q}_s } 
\sim \left\| \chi \cdot f \right\|_{ \mathcal{F} L^q_s }$.
Here, the implicit constant is independent of $x_0 \in \mathbb{R}^n$, but depends on $R>0$.
\end{lemma}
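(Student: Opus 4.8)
The plan is to fix once and for all a convenient window and to exploit that $h:=\chi\cdot f$ is compactly supported in space. Since the $M^{p,q}_s$-norm is independent of the window up to equivalent norms, I would choose $g\in C^\infty_0(\mathbb{R}^n)$ with $g\geq 0$, $g\not\equiv 0$ (so that $\widehat g(0)=\int g>0$), supported in a fixed ball of radius $R'$, say $R'=1$. The crucial geometric observation is that
\[
V_g h(x,\xi)=\mathcal{F}\big[h\cdot\overline{g(\cdot-x)}\big](\xi),
\]
and the integrand in $t$ is supported in $\supp h\cap\supp g(\cdot-x)$; since $\supp h\subset\{|t-x_0|\le R\}$ and $\supp g(\cdot-x)\subset\{|t-x|\le 1\}$, this set is empty unless $|x-x_0|\le R+1$. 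Hence $V_g h(\cdot,\xi)$ is supported in a ball of radius $R+1$ centered at $x_0$, whose volume depends only on $R$, not on $x_0$. This uniform control of the $x$-support is exactly what will yield constants independent of $x_0$.

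For the estimate $\left\|h\right\|_{\mathcal{F}L^q_s}\lesssim\left\|h\right\|_{M^{p,q}_s}$, I would use the elementary inversion identity $\int_{\mathbb{R}^n}V_g h(x,\xi)\,dx=\overline{\widehat g(0)}\,\widehat h(\xi)$, obtained by Fubini and $\int\overline{g(t-x)}\,dx=\overline{\widehat g(0)}$. Because the $x$-integral effectively runs only over the ball $\{|x-x_0|\le R+1\}$, Hölder's inequality in $x$ gives $|\widehat h(\xi)|\lesssim_R\|V_g h(\cdot,\xi)\|_{L^p(\mathbb{R}^n_x)}$, the constant being $|\widehat g(0)|^{-1}$ times the $R$-dependent volume factor. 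Multiplying by $\langle\xi\rangle^s$ and taking the $L^q_\xi$-norm then yields the claim.

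For the reverse estimate $\left\|h\right\|_{M^{p,q}_s}\lesssim\left\|h\right\|_{\mathcal{F}L^q_s}$, I would start from the convolution bound
\[
\sup_{x}|V_g h(x,\xi)|\le(2\pi)^{-n}\int_{\mathbb{R}^n}|\widehat h(\eta)|\,|\widehat g(\eta-\xi)|\,d\eta,
\]
which follows from $\mathcal{F}[\overline{g(\cdot-x)}](\eta)=e^{-ix\eta}\overline{\widehat g(-\eta)}$, the phase being harmless under absolute values. Since the $x$-support has $R$-controlled volume, $\|V_g h(\cdot,\xi)\|_{L^p_x}\lesssim_R\sup_x|V_g h(x,\xi)|$. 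I would then insert the Peetre inequality $\langle\xi\rangle^s\lesssim\langle\eta\rangle^s\langle\xi-\eta\rangle^{|s|}$ to write $\langle\xi\rangle^s\sup_x|V_g h(x,\xi)|$ as a convolution of $\langle\cdot\rangle^s|\widehat h|$ with the kernel $\langle\cdot\rangle^{|s|}|\widehat g(-\cdot)|$, which lies in $L^1$ because $\widehat g\in\mathcal{S}$. Young's inequality in $L^q$ then gives $\left\|h\right\|_{M^{p,q}_s}\lesssim_R\|\langle\cdot\rangle^s\widehat h\|_{L^q}=\left\|h\right\|_{\mathcal{F}L^q_s}$.

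The routine points are the Fubini, Hölder, Peetre and Young steps; the genuine issue is to keep every constant independent of $x_0$. This is guaranteed because the only way $x_0$ enters is through the location (not the size) of the $x$-support of $V_g h$, and through pure phase factors $e^{-ix\eta}$ that drop out under absolute values. The cases $p=\infty$ or $q=\infty$ require only the usual modifications of Hölder's and Young's inequalities and cause no extra difficulty.
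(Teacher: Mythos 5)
Your proposal is correct, and one half of it follows the paper's own argument closely: for the bound $\left\| \chi f \right\|_{M^{p,q}_s} \lesssim \left\| \chi f \right\|_{\mathcal{F}L^q_s}$, both you and the paper observe that the compact $x$-support of $V_g[\chi f](\cdot,\xi)$ (a ball of $R$-dependent radius about $x_0$) reduces the $L^p_x$-norm to a sup in $x$, which is then dominated by the convolution $|\widehat{\chi f}| \ast |\widehat{g}(-\cdot)|$, and Peetre plus Young finish the job; the paper phrases the convolution step as Hausdorff--Young applied to $\overline{\widehat{\phi}(D-\xi)}(\chi f)$, but it is the same estimate. The genuine difference is in the converse direction $\left\| \chi f \right\|_{\mathcal{F}L^q_s} \lesssim \left\| \chi f \right\|_{M^{p,q}_s}$: the paper chooses a second window $\phi$ with $\phi \equiv 1$ on a $2R$-ball, so that $\phi(\cdot - x) \equiv 1$ on $\supp \chi$ for every $|x - x_0| \leq R$ and the STFT coincides pointwise with $\widehat{\chi f}(\xi)$ on that ball, giving $R^{n/p}|\widehat{\chi f}(\xi)| \leq \| V_\phi[\chi f](\cdot,\xi)\|_{L^p_x}$ directly; you instead keep a single window with $\widehat{g}(0) = \int g > 0$ and use the averaging identity $\int_{\mathbb{R}^n} V_g [\chi f](x,\xi)\, dx = \overline{\widehat{g}(0)}\, \widehat{\chi f}(\xi)$ followed by H\"older over the support ball. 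Both tricks are elementary and give constants depending only on $R$ (and the fixed window), never on $x_0$, since $x_0$ enters only through the location of the $x$-support and through phase factors. What each buys: the paper's plateau window yields a pointwise reproduction of $\widehat{\chi f}$ with no inversion-type formula needed, while your single-window argument avoids invoking window-independence twice with two differently shaped windows and keeps the whole proof within one fixed $g$; the bookkeeping is marginally cleaner, at the modest price of justifying the Fubini step behind the averaging identity (harmless here, since $V_g[\chi f](\cdot,\xi)$ is continuous and compactly supported, hence in $L^1_x$, exactly the same level of rigor at which the paper operates).
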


\begin{proof}[\bf Proof of Lemma \ref{mod sim fou}]
Put $f_\chi = \chi \cdot f$.
We first prove the $\lesssim$ part. 
Choose $\phi \in \mathcal{S} (\mathbb{R}^n) \setminus \{0\}$ satisfying 
that $\supp \phi \subset \{ x : |x| \leq R \}$.  
Then, we see that $V_\phi [ f_\chi ] (x,\xi) $ always vanishes 
unless $x \in \mathbb{R}^n$ satisfies that $| x-x_0 | \leq 2R$.
Using the identity $| V_\phi [ f_\chi ] (x,\xi) | = | \overline{ \widehat \phi ( D - \xi ) } f_\chi (x) |$,
we have by the H\"older and Hausdorff-Young inequalities
\begin{align*}
\left\| V_\phi [ f_\chi ] (x,\xi) \right\|_{ L^p ( \mathbb{R}^n_x )}
&=
\left\| \chi_{B_{2R} (x_0)} (x) \cdot V_\phi [ f_\chi ] (x,\xi) \right\|_{ L^p ( \mathbb{R}^n_x )}
\\
&\lesssim 
R^{n/p} \left\| \overline{ \widehat \phi ( D - \xi ) } f_\chi (\cdot) \right\|_{ L^\infty}
\lesssim
R^{n/p} \left\| \widehat \phi ( t - \xi ) \cdot \mathcal{F}[ f_\chi ](t)\right\|_{ L^1 ( \mathbb{R}^n_t )}.
\end{align*}
Multiplying the weight $\langle \xi \rangle^s$ to the both sides, 
using the inequality $\langle \xi \rangle^s \lesssim \langle t \rangle^s \langle t - \xi \rangle^{ | s | }$ 
and taking the $L^q $-norm with respect to the $\xi$-variable, we have by the Young inequality
\begin{align*}
\left\| \left\| \langle \xi \rangle^s V_\phi [ f_\chi ] (x,\xi) \right\|_{ L^p ( \mathbb{R}^n_x )} \right\|_{ L^q (\mathbb{R}^n_\xi)}
&\lesssim
R^{n/p} \left\| \left\| \langle t - \xi \rangle^{ | s | } \widehat \phi ( t - \xi ) 
\cdot \langle t \rangle^s \mathcal{F}[ f_\chi ](t)\right\|_{ L^1 ( \mathbb{R}^n_t )} \right\|_{ L^q (\mathbb{R}^n_\xi)}
\\
&=
R^{n/p} \left\|\left( \langle \cdot \rangle^{ | s | } \left| \widehat \phi \right| \right)
\ast \left( \langle \cdot \rangle^s \left| \mathcal{F}[ f_\chi ] \right| \right) \right\|_{ L^q }
\lesssim
R^{n/p} \left\| \langle \cdot \rangle^s \mathcal{F}[ f_\chi ] \right\|_{ L^q}.
\end{align*}

We next prove the $\gtrsim$ part.
Choose $\phi \in \mathcal{S} (\mathbb{R}^n)$ satisfying that $\supp \phi \equiv 1$ on $ \{ x : |x| \leq 2R \}$. 
Then, $\phi(\cdot-x) \equiv 1$ on $\supp \chi$ 
if $x \in \mathbb{R}^n$ satisfies that $| x-x_0 | \leq R$.
Hence, it follows that
\begin{align*}
R^{n/p} \left| \mathcal{F} [ f_\chi ] (\xi) \right|
&\sim
\left\| \chi_{B_{R} (x_0)} (x) \cdot \mathcal{F} [ f_\chi ] (\xi) \right\|_{ L^p (\mathbb{R}^n_x) }
\\
&=
\left\| \chi_{B_{R} (x_0)} (x) \cdot \int_{\mathbb{R}^n} e^{-i\xi \cdot t} \phi (t-x) \cdot \chi(t) f (t) dt \right\|_{ L^p (\mathbb{R}^n_x) }
\leq
\left\| V_\phi [ f_\chi ] (x,\xi) \right\|_{ L^p (\mathbb{R}^n_x) }.
\end{align*}
Multiplying the weight $\langle \xi \rangle^s$ to the both sides
and taking the $L^q $-norm with respect to the $\xi$-variable, we have
\begin{align*}
\left\| \langle \cdot \rangle^s \mathcal{F} [ f_\chi ] \right\|_{ L^q }
\lesssim
R^{-n/p} \left\| \left\| \langle \xi \rangle^s V_\phi [ f_\chi ] (x,\xi) \right\|_{ L^p (\mathbb{R}^n_x) } \right\|_{ L^q (\mathbb{R}^n_\xi) }.
\end{align*}

Therefore, recalling the property that the modulation space norm is independent of the choice of window functions, we obtain $\| f_\chi \|_{ M^{p,q}_s } \sim \|  f_\chi \|_{ \mathcal{F} L^q_s } $.
\end{proof}

\section{Conditions for modulation spaces and Fourier Lebesgue spaces to be multiplication algebras}

In this section, we first consider necessary and sufficient conditions
for modulation spaces to be multiplication algebras,
that is,
for the estimate
\begin{equation*}
\| f \cdot g \|_{M^{p,q}_s} \lesssim \| f \|_{M^{p,q}_s} \cdot \| g \|_{M^{p,q}_s},
\end{equation*}
to hold.
They are given as follows.

\begin{proposition}
\label{opt for multi mod}
Let $1 \leq p \leq \infty$, $1 < q \leq \infty$ and $s \in \mathbb{R}$.
Then, the modulation space $M^{p,q}_s (\mathbb{R}^n)$ is a multiplication algebra 
if and only if the condition $s > n/q^\prime$ is satisfied.
\end{proposition}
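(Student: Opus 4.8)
The sufficiency is exactly Proposition \ref{alge mod} specialized to $1<q\le\infty$, so the entire content lies in the necessity, which I prove by contraposition: assuming $s\le n/q^\prime$, I exhibit a single real-valued $f$ with $f\in M^{p,q}_s(\mathbb R^n)$ but $f^2\notin M^{p,q}_s(\mathbb R^n)$ (or with $f^2$ not even defined as a tempered distribution), which already defeats the algebra inequality. The first step removes both the exponent $p$ and the modulation structure: taking $f$ supported in the unit ball and applying the local equivalence of Lemma \ref{mod sim fou} with a cutoff $\chi\in C^\infty_0(\mathbb R^n)$ that is $\equiv1$ on $\supp f$ gives $\|f\|_{M^{p,q}_s}\sim\|f\|_{\mathcal FL^q_s}$ and $\|f^2\|_{M^{p,q}_s}\sim\|f^2\|_{\mathcal FL^q_s}$, with constants independent of $p$. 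Thus it suffices to build, for every $s\le n/q^\prime$, a compactly supported $f\in\mathcal FL^q_s$ whose square leaves $\mathcal FL^q_s$. The guiding principle is that, for a compactly supported function with an isolated singularity at the origin, membership in $\mathcal FL^q_s$ is governed by the high-frequency decay of $\widehat f$, which is in turn dictated by the strength of that singularity; multiplying a model singularity by a cutoff $\psi\in C^\infty_0$ with $\psi\equiv1$ near $0$ only contributes a Schwartz term to $\widehat f$ and leaves the integrability of $\langle\cdot\rangle^s\widehat f$ at infinity unaffected.

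In the subcritical range $s<n/q^\prime$ I take $f(x)=\psi(x)\,|x|^{-a}$ with $0<a<n$. Using $\mathcal F[|x|^{-a}]=c_{n,a}\,|\xi|^{-(n-a)}$ one gets $\widehat f(\xi)\sim|\xi|^{-(n-a)}$ as $|\xi|\to\infty$, so that $\langle\cdot\rangle^s\widehat f\in L^q$ precisely when $a<n/q^\prime-s$; this is how $f\in\mathcal FL^q_s$ is secured. The exponent $a$ is then chosen as large as this constraint allows. If $0<n/q^\prime-s\le n/2$, pick $a\in[\tfrac12(n/q^\prime-s),\,n/q^\prime-s)$; then $2a<n$, the same computation gives $\widehat{f^2}\sim|\xi|^{-(n-2a)}$ with $f^2=\psi^2|x|^{-2a}$, and $2a\ge n/q^\prime-s$ forces $f^2\notin\mathcal FL^q_s$. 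If instead $n/q^\prime-s>n/2$ (the regime of small or very negative $s$), pick $a\in(n/2,\min(n,\,n/q^\prime-s))$; now $f\in\mathcal FL^q_s$ as before, but $2a>n$ makes $f^2$ non-locally-integrable, so the product fails to exist in $\mathcal S^\prime$ — again a violation of the algebra requirement. Since one of these two regimes occurs for every $s<n/q^\prime$, the subcritical case is complete.

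The critical case $s=n/q^\prime$ is the delicate one, as the algebraic exponent degenerates to $a=0$ and only a logarithm separates $f$ from $f^2$. Here I use a logarithmic singularity, most transparently described on the Fourier side by a smooth radial profile $\widehat f(\xi)=\langle\xi\rangle^{-n}\,(\log(e+|\xi|))^{-\kappa}$ (a compactly supported representative is produced, as above, by realizing the corresponding log-singular function against a cutoff). Then $\langle\cdot\rangle^{n/q^\prime}\widehat f=\langle\cdot\rangle^{-n/q}(\log)^{-\kappa}$, whose $L^q$-norm is finite iff $\kappa q>1$. The heart of the matter is that the self-convolution $\widehat{f^2}=c\,\widehat f\ast\widehat f$ gains one power of the logarithm: the contributions from $\eta\approx0$ and $\eta\approx\xi$ yield $\widehat{f^2}(\xi)\sim\langle\xi\rangle^{-n}(\log\langle\xi\rangle)^{1-2\kappa}$, so that $\langle\cdot\rangle^{n/q^\prime}\widehat{f^2}\in L^q$ iff $(2\kappa-1)q>1$. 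Because $q>1$, the interval $1/q<\kappa\le\tfrac12+\tfrac1{2q}$ is nonempty, and any $\kappa$ in it gives $f\in\mathcal FL^q_{n/q^\prime}$ while $f^2\notin\mathcal FL^q_{n/q^\prime}$.

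The main obstacle is exactly this critical construction: correctly tracking the extra logarithm acquired under self-convolution, and verifying that replacing the clean Fourier profile by an honestly compactly supported function (via the cutoff) preserves the high-frequency asymptotics and hence both integrability statements. It is also precisely here that the hypothesis $q>1$ enters essentially, since the separating interval for $\kappa$ collapses as $q\to1$ — consistent with the fact that for $q=1$ the space $M^{p,1}_0$ is already an algebra by Proposition \ref{alge mod}.
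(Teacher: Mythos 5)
Your proposal is correct in substance, but it takes a genuinely different route from the paper. The paper constructs no continuous counterexample at all: it invokes Lemma \ref{necessity for multi 1} (quoted from Guo--Fan--Wu--Zhao) to pass from the algebra property of $M^{p,q}_s$ to the discrete convolution embedding $\ell^q_s \ast \ell^q_s \hookrightarrow \ell^q_s$, and then refutes that embedding in Lemma \ref{necessity for multi 2} by an explicit pair of sequences, $a_{k,N}=\langle k\rangle^{-n/q-s}(C+\log\langle k\rangle)^{-1/q-\varepsilon}$ and the indicator of $\{k : N\le|k|\le 5N\}$, whose convolution gains the factor $(1+\log(1+N/2))^{1-1/q-\varepsilon}\to\infty$. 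You instead use the paper's own Appendix A (Lemma \ref{mod sim fou}) to reduce matters to $\mathcal{F}L^q_s$ for compactly supported functions, and then build concrete singular functions: truncated power singularities $\psi(x)|x|^{-a}$ when $s<n/q'$, and a logarithmically corrected Bessel-type profile at the critical index $s=n/q'$. The two arguments rest on the same mechanism --- at the critical index only a logarithm separates membership from failure, and the window of admissible logarithmic exponents ($1-1/q-\varepsilon>0$ there, roughly $1/q<\kappa<\tfrac12+\tfrac1{2q}$ here) is nonempty exactly because $q>1$ --- but the paper's version is entirely discrete and elementary, at the price of importing Lemma \ref{necessity for multi 1} from the literature, whereas yours is self-contained relative to this paper (Appendix A plus Proposition \ref{alge fou}) and exhibits explicit functions witnessing the failure, at the price of genuine Fourier asymptotics in the critical case.

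Two points need repair or completion. First, your closed interval endpoints break down at $q=\infty$: with $2a=n-s$, or with $\kappa=\tfrac12+\tfrac1{2q}=\tfrac12$, the square \emph{does} remain in $\mathcal{F}L^\infty_s$, because the $L^\infty$ membership conditions are non-strict and no logarithmic divergence of an integral is available to rescue the endpoint; simply take $a$ and $\kappa$ in the open intervals, which are still nonempty. Second, the step you flag as the main obstacle genuinely has to be carried out, and it hides a sign issue you do not mention: after truncation, $\widehat{\psi f}$ is no longer pointwise nonnegative, so the two-bump lower bound for the self-convolution cannot be applied to $\widehat{\psi f}$ naively. The standard fix is to prove first that $\widehat{\psi f}(\xi)=\widehat f(\xi)+O(\langle\xi\rangle^{-n-1})$ (using $(2\pi)^{-n}\int\widehat\psi = \psi(0)=1$, the bound $|\nabla \widehat f(\xi)|\lesssim \langle\xi\rangle^{-n-1}$, and the rapid decay of $\widehat\psi$), and only then run the lower bound on $\widehat f \ast \widehat f$, absorbing the error terms; the gained logarithm comes from $\int_{|\eta|\le R}\widehat f(\eta)\,d\eta \sim (\log R)^{1-\kappa}\to\infty$, which requires $\kappa<1$ and is guaranteed by your range. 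With these repairs the argument is complete: the gaps are ones of execution, not of ideas.
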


Actually, this proposition is immediately obtained from \cite[Theorem 1.5]{guo fan wu zhao 2017}.
In fact, in \cite{guo fan wu zhao 2017}, necessary and sufficient conditions for the more general estimate
\begin{equation*}
\| f \cdot g \|_{M^{p,q}_s} \lesssim \| f \|_{M^{p_1,q_1}_{s_1}} \cdot \| g \|_{M^{p_2,q_2}_{s_2}}
\end{equation*}
were established, so that Proposition \ref{opt for multi mod} is given
by setting $p=p_1=p_2$, $q=q_1=q_2$ and $s=s_1=s_2$.
(We remark that, although only the case $q>1$ is considered in Proposition \ref{opt for multi mod},
the whole case $q \geq 1$ is treated in \cite{guo fan wu zhao 2017}.)
However, for reader's convenience,
we give a proof of Proposition \ref{opt for multi mod}
where the following two lemmas are essential:

\begin{lemma}
[{\cite[Proposition 5.1]{guo fan wu zhao 2017}}]
\label{necessity for multi 1}
Let $1 \leq p,q \leq \infty$ and $s \in \mathbb{R}$.
Then, if the modulation space $M^{p,q}_s$ is a multiplication algebra,
we have
$\ell^q_s \ast \ell^q_s \hookrightarrow \ell^q_s$.
\end{lemma}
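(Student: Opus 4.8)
The plan is to turn the multiplication estimate on $M^{p,q}_s$ into the discrete convolution estimate by feeding in test functions whose frequencies sit on a lattice, so that forming a product produces exactly the convolution of the coefficient sequences. The bridge is the local equivalence $\|\cdot\|_{M^{p,q}_s}\sim\|\cdot\|_{\mathcal{F}L^q_s}$ for functions supported in a fixed ball (Lemma \ref{mod sim fou}), which frees us from the index $p$ once we work with compactly supported test functions.

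Concretely, I would fix $\phi\in C_0^\infty(\mathbb{R}^n)\setminus\{0\}$ supported in a fixed ball and a large dilation parameter $L$, and for finitely supported sequences $\{a_k\},\{b_k\}$ set
\[
f=\sum_k a_k e^{iLk\cdot x}\phi(x),\qquad g=\sum_k b_k e^{iLk\cdot x}\phi(x).
\]
Since $\mathcal{F}[e^{iLk\cdot}\phi]=\widehat\phi(\cdot-Lk)$, the product is
\[
fg=\sum_m\Big(\sum_k a_{m-k}b_k\Big)e^{iLm\cdot x}\phi(x)^2=\sum_m(a\ast b)_m\,e^{iLm\cdot x}\phi(x)^2,
\]
so the ordinary convolution of the coefficient sequences appears. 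As $f$, $g$ and $fg$ are all supported in a fixed ball, Lemma \ref{mod sim fou} gives $\|f\|_{M^{p,q}_s}\sim\|f\|_{\mathcal{F}L^q_s}$ and likewise for $g$ and $fg$, with constants independent of the sequences; combined with the assumed bound $\|fg\|_{M^{p,q}_s}\lesssim\|f\|_{M^{p,q}_s}\|g\|_{M^{p,q}_s}$, this reduces matters to the single inequality $\|fg\|_{\mathcal{F}L^q_s}\lesssim\|f\|_{\mathcal{F}L^q_s}\|g\|_{\mathcal{F}L^q_s}$.

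It then remains to identify the Fourier Lebesgue norms of these test functions with the sequence norms, namely
\[
\Big\|\langle\xi\rangle^s\sum_k a_k\widehat\phi(\xi-Lk)\Big\|_{L^q}\sim\|\{a_k\}\|_{\ell^q_s},
\]
and the analogue with $\widehat\phi$ replaced by $\widehat{\phi^2}$. Granting this, and using $\langle Lm\rangle^s\sim_L\langle m\rangle^s$, one reads off $\|\{(a\ast b)_m\}\|_{\ell^q_s}\lesssim\|\{a_k\}\|_{\ell^q_s}\|\{b_k\}\|_{\ell^q_s}$; exhausting arbitrary sequences of $\ell^q_s$ by finitely supported ones then yields $\ell^q_s\ast\ell^q_s\hookrightarrow\ell^q_s$.

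The equivalence of norms in the last step is where the real work lies. The upper bound is routine: $\widehat\phi$ is Schwartz, so $|\widehat\phi(\xi-Lk)|\lesssim_M\langle\xi-Lk\rangle^{-M}$, and together with $\langle\xi\rangle^s\lesssim\langle Lk\rangle^s\langle\xi-Lk\rangle^{|s|}$ the weighted sum becomes the convolution of the sequence $\{\langle Lk\rangle^s a_k\}$ with an $L^1$ profile, whence Young's inequality gives the bound by $\|\{a_k\}\|_{\ell^q_s}$. I expect the lower bound to be the main obstacle, because the overlapping Schwartz bumps $\widehat\phi(\cdot-Lk)$ can in principle interfere. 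This is exactly the role of the large parameter $L$: for $L$ sufficiently large the translated bumps are essentially disjoint, the associated Gram-type matrix is diagonally dominant with off-diagonal entries of size $O(L^{-M})$, and hence the coefficient-to-function map is bounded below, which supplies the matching lower bound. Passing to the dilated lattice only alters the weights and constants by fixed factors and leaves the index structure of the convolution unchanged, so the final embedding is unaffected.
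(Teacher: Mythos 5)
Your proposal is sound, but there is nothing in the paper to compare it against: the paper does not prove this lemma at all, it imports it verbatim from \cite[Proposition 5.1]{guo fan wu zhao 2017}. Your argument is therefore a self-contained substitute, and a natural one for this particular paper, because routing everything through Lemma \ref{mod sim fou} (Appendix A) eliminates the index $p$ at the outset and reduces the algebra hypothesis to the Fourier--Lebesgue inequality $\| fg \|_{\mathcal{F}L^q_s} \lesssim \| f \|_{\mathcal{F}L^q_s} \| g \|_{\mathcal{F}L^q_s}$ for your lattice-modulated bumps; the cited reference instead estimates the short-time Fourier transform of essentially the same Gabor-type test functions directly, which avoids the local-equivalence lemma but is the same underlying idea (frequency translates of a fixed bump turn multiplication of functions into convolution of coefficients). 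Two refinements are needed where you say the real work lies. First, you only need one direction of each norm identification: the upper bound $\| \langle \xi \rangle^s \sum_k a_k \widehat{\phi}(\xi - Lk) \|_{L^q} \lesssim \| a_k \|_{\ell^q_s}$ for $f$ and $g$, and the lower bound only for $fg$, i.e.\ only for $\widehat{\phi^2}$. This matters: for a general nonzero bump one may have $\widehat{\phi}(0) = \int \phi = 0$, so the lower bound for $\widehat{\phi}$ would require recentering at a point where $\widehat{\phi}$ does not vanish, whereas $\widehat{\phi^2}(0) = \int \phi^2 > 0$ always, so the nonvanishing needed for the one lower bound you actually use is automatic. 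Second, the ``Gram-type matrix'' language has no direct meaning in $L^q$ for $q \neq 2$; the precise version of your diagonal-dominance step is: pick $r_0$ with $| \widehat{\phi^2} | \geq c_0 > 0$ on the ball $B(0,r_0)$, restrict the $L^q$-norm to the disjoint balls $B(Lm, r_0)$ (on which $\langle \xi \rangle^s \sim \langle m \rangle^s$ up to $L$-dependent constants), bound the $m$-th piece below by $c_0 |c_m| \, |B(0,r_0)|^{1/q} - C_M \sum_{k \neq m} |c_k| (L|k-m|)^{-M}$ with $c = a \ast b$, and absorb the second term using Young's inequality in $\ell^q$ with $M > n + |s|$, which makes it $O(L^{-M}) \| c \|_{\ell^q_s}$ and hence harmless once $L$ is fixed large. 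With that made quantitative, your final exhaustion step (monotone convergence for nonnegative sequences, then $|a \ast b| \leq |a| \ast |b|$ for general ones) is routine, and the proof is complete for all $1 \leq p, q \leq \infty$.
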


\begin{lemma}
\label{necessity for multi 2}
Let $1 < q \leq \infty$ and $s \in \mathbb{R}$.
Then, if $\ell^q_s \ast \ell^q_s \hookrightarrow \ell^q_s$ holds,
we have $s > n/q^\prime$.
\end{lemma}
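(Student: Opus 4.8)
The plan is to prove the contrapositive: assuming $s \le n/q^\prime$, I will exhibit sequences for which the convolution inequality $\|a * b\|_{\ell^q_s} \lesssim \|a\|_{\ell^q_s}\|b\|_{\ell^q_s}$ fails, so that $\ell^q_s \ast \ell^q_s \hookrightarrow \ell^q_s$ cannot hold. Throughout $(a\ast b)_k = \sum_{j \in \mathbb{Z}^n} a_{k-j}\, b_j$ denotes the discrete convolution, and the whole argument is elementary harmonic analysis on $\mathbb{Z}^n$. It is natural to split into the strict range $s < n/q^\prime$ and the endpoint $s = n/q^\prime$, the latter being the delicate case.

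For $s < n/q^\prime$ I would use a dilation (box) test. Let $Q_R = \{k \in \mathbb{Z}^n : |k|_\infty \le R\}$ and take $a = b = \mathbf{1}_{Q_R}$, the indicator sequence. A direct count shows $(a\ast a)_k \gtrsim R^n$ for every $k \in Q_R$, since for such $k$ there are at least $\sim R^n$ representations $k = i + (k-i)$ with $i,\, k-i \in Q_R$. Combined with $\|a\|_{\ell^q_s} \sim R^{n/q + s}$ (computed for $s \ge 0$; smaller $s$ only strengthens the conclusion, as $(a\ast a)_0 = |Q_R| \sim R^n$ while $\|a\|_{\ell^q_s}$ stays bounded), one obtains
\begin{equation*}
\frac{\|a \ast a\|_{\ell^q_s}}{\|a\|_{\ell^q_s}^{\,2}} \gtrsim R^{\,n/q^\prime - s} \longrightarrow \infty \qquad (R \to \infty),
\end{equation*}
because $n/q^\prime - s > 0$. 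Hence the embedding fails for every $s < n/q^\prime$; this part is routine once the cube self-convolution is counted.

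The main obstacle is the endpoint $s = n/q^\prime$, where any purely power-type test is scale invariant and only yields the non-strict bound $s \ge n/q^\prime$. Here I would insert a logarithmic correction: set $a_k = \langle k\rangle^{-n}\bigl(\log(e + |k|)\bigr)^{-\beta}$, noting that $s q = n q/q^\prime$ gives $\langle k\rangle^{sq}|a_k|^q = \langle k\rangle^{-n}\bigl(\log(e+|k|)\bigr)^{-\beta q}$, so that $a \in \ell^q_s$ precisely when $\beta q > 1$. The crux is a sharp lower bound for the self-convolution at the critical exponent $n$: splitting the sum into the regions $|j| \le |k|/2$ and $|k-j| \le |k|/2$ and using the partial-sum asymptotics $\sum_{|j| \le M}\langle j\rangle^{-n}(\log\langle j\rangle)^{-\beta} \sim (\log M)^{1-\beta}$ (valid for $\beta < 1$), one finds $(a\ast a)_k \gtrsim \langle k\rangle^{-n}(\log\langle k\rangle)^{1 - 2\beta}$ for large $|k|$. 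Consequently $a \ast a \in \ell^q_s$ would force $(2\beta - 1)q > 1$. Choosing any $\beta$ with $1/q < \beta < \tfrac12(1 + 1/q)$ — an interval nonempty \emph{exactly} because $q > 1$, and collapsing as $q \to 1$ — yields $a \in \ell^q_s$ but $a \ast a \notin \ell^q_s$, contradicting the embedding. The case $q = \infty$ is identical, with $n/q^\prime = n$ and $\beta \in (0,1/2)$, replacing the series convergence conditions by the corresponding supremum conditions.

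Putting the two ranges together shows that $\ell^q_s \ast \ell^q_s \hookrightarrow \ell^q_s$ forces $s > n/q^\prime$, as claimed. I expect essentially all the genuine work to lie in the endpoint analysis — specifically in justifying the critical-exponent convolution asymptotic lower bound $(a\ast a)_k \gtrsim \langle k\rangle^{-n}(\log\langle k\rangle)^{1-2\beta}$ with logarithmic weights, and in verifying that the admissible window for $\beta$ is nonempty precisely in the regime $q > 1$ under consideration.
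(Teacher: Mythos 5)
Your proof is correct, but it takes a genuinely different route from the paper's. The paper handles all $s \le n/q^\prime$ in a single stroke with a two-sequence test: the truncated critical sequence $a_{k,N} = \langle k\rangle^{-n/q-s}\left( C+\log\langle k\rangle\right)^{-1/q-\varepsilon}\mathbf{1}_{\{|k|\le N\}}$, whose $\ell^q_s$-norm is bounded uniformly in $N$, convolved against the annulus indicator $b_{k,N} = \mathbf{1}_{\{N \le |k| \le 5N\}}$; on $2N \le |k| \le 4N$ the convolution is bounded below by $\sum_{|\ell| \le N} a_{\ell,N} \gtrsim (\log N)^{1-1/q-\varepsilon}$ (this is where $s\le n/q^\prime$ enters, via $\langle\ell\rangle^{-n/q-s}\ge\langle\ell\rangle^{-n}$), so $\|a_{\cdot,N}*b_{\cdot,N}\|_{\ell^q_s} \gtrsim \|b_{k,N}\|_{\ell^q_s}(\log N)^{1-1/q-\varepsilon}$, and letting $N \to \infty$ contradicts the uniform bound. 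You instead split into the subcritical range $s<n/q^\prime$ (box self-convolution, a routine dilation test) and the endpoint $s = n/q^\prime$, where you exhibit a single global sequence $a_k = \langle k\rangle^{-n}\left(\log(e+|k|)\right)^{-\beta}$ with $1/q < \beta < \tfrac12(1+1/q)$ satisfying $a \in \ell^q_s$ but $a * a \notin \ell^q_s$. Both arguments exploit the same underlying phenomenon --- critical decay $\langle k\rangle^{-n}$ with a logarithmic correction, the admissible window of log-exponents being nonempty precisely because $q>1$ --- but your endpoint version gives a qualitatively sharper failure (the convolution of an element of the space leaves the space outright, rather than a blow-up of constants along a family), at the price of a case distinction the paper avoids. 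One small repair in your subcritical step: the parenthetical claim that $\|\mathbf{1}_{Q_R}\|_{\ell^q_s}$ ``stays bounded'' for $s<0$ is only true when $s < -n/q$; for $-n/q < s < 0$ the norm still grows like $R^{n/q+s}$. This is harmless, since in that regime the displayed ratio $R^{n/q^\prime - s}$ is valid verbatim (and at $s=-n/q$ the norm grows only logarithmically while $(a*a)_0\sim R^n$), so all cases are covered after a one-line adjustment.
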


\begin{proof}[Proof of Lemma \ref{necessity for multi 2}]
We assume towards a contradiction that $s \leq n/q^\prime$.
Since $q >1$, we can take $\varepsilon >0$ such that $1 - 1/q -\varepsilon > 0$.
For this $\varepsilon > 0$, we define the sequences
\begin{align*}
a_{k,N}
&=
\left\{
\begin{array}{ll}
\langle k \rangle^{-n/q - s} \left( C + \log \langle k \rangle \right)^{-1/q - \varepsilon}, & \IF | k | \leq N, \\
0, & \ow;
\end{array}
\right.
\\
b_{k,N}
&=
\left\{
\begin{array}{ll}
1, & \IF N \leq |k| \leq 5N, \\
0, & \ow,
\end{array}
\right.
\end{align*}
in $k\in\mathbb{Z}^n$, 
where $N>0$ is a sufficiently large integer
and $C>1$ is a suitable constant which depends only on the dimension $n$.

We first estimate each sequence on $\ell^q_s$. 
For the case $q < \infty$,
the spherical coordinate transform yields that
\begin{align*}
\left\| a_{k,N} \right\|_{\ell^q_s}^q
&=
\sum_{ |k| \leq N } \langle k \rangle^{-n} \left( C + \log \langle k \rangle \right)^{-1 - \varepsilon q}
\\
&\lesssim
\int_{ |x| \leq 2N } \langle x \rangle^{-n} \left( C^\prime + \log \langle x \rangle \right)^{-1 - \varepsilon q} dx
\lesssim
\int_0^{2N} (1+r)^{-1} \left( 1 + \log \left( 1+r \right) \right)^{-1 - \varepsilon q} dr.
\end{align*}
By the change of variable $t = 1 + \log (1+r)$, we have
\begin{equation*}
\left\| a_{k,N} \right\|_{\ell^q_s}^q 
\lesssim
\int_1^{1+\log(1+2N)} t^{-1-\varepsilon q} dt \lesssim 1.
\end{equation*}
For the case $q = \infty$,  
we have $\| a_{k,N} \|_{\ell^\infty_s} \leq 1$, since $\varepsilon > 0$.
On the other hand, we have $\left\| b_{k,N} \right\|_{\ell^q_s} \sim N^{s+n/q}$ holds for $1 < q \leq \infty$.

Next, we consider the convolution $\{ a_{\cdot,N} \ast b_{\cdot,N} \}_{k \in \mathbb{Z}^n}$.
For $2N \leq |k| \leq 4N$, we have
\begin{align*}
\sum_{\ell \in \mathbb{Z}^n} a_{\ell,N} b_{k-\ell,N}
=
\sum_{N \leq |k-\ell| \leq 5N} a_{\ell,N}
=
\sum_{|\ell| \leq N} a_{\ell,N},
\end{align*}
since 
$\{ \ell \in \mathbb{Z}^n : |\ell| \leq N \} \subset \{ \ell \in \mathbb{Z}^n : N \leq |k-\ell| \leq 5N \}$
and $a_{\ell, N}=0$ if $|\ell| > N$.
Then by $s \leq n/q^\prime$ we have
\begin{align*}
\sum_{\ell \in \mathbb{Z}^n} a_{\ell,N} b_{k-\ell,N}
&=
\sum_{|\ell| \leq N} \langle \ell \rangle^{-n/q-s} \left( C + \log \langle \ell \rangle \right)^{-1/q-\varepsilon}
\\
&\geq
\sum_{|\ell| \leq N} \langle \ell \rangle^{-n} \left( C + \log \langle \ell \rangle \right)^{-1/q-\varepsilon}
\\
&\gtrsim
\int_{|x| \leq N/2} \langle x \rangle^{-n} \left( C^\prime + \log \langle x \rangle \right)^{-1/q-\varepsilon} dx
\\
&\sim
\int_0^{N/2} r^{n-1} (1+r)^{-n} \left( 1 + \log \left( 1+r \right) \right)^{-1/q-\varepsilon} dr
\\
&\gtrsim
\int_1^{N/2} (1+r)^{-1} \left( 1 + \log \left( 1+r \right) \right)^{-1/q-\varepsilon} dr,
\end{align*}
and hence by the same change of variable as above we have
\begin{align*}
\sum_{\ell \in \mathbb{Z}^n} a_{\ell,N} b_{k-\ell,N}
\gtrsim
\int_{1+\log2}^{1+\log (1+N/2)} t^{-1/q-\varepsilon}
\gtrsim
\left( 1 + \log \left( 1+N/2 \right) \right)^{1-1/q-\varepsilon}.
\end{align*}
This concludes that
\begin{equation*}
\left\| \{ a_{\cdot,N} \ast b_{\cdot,N} \}_{k \in \mathbb{Z}^n} \right\|_{\ell^q_s}
\geq
\left\| \{ a_{\cdot,N} \ast b_{\cdot,N} \}_{ k \in \{ 2N \leq |k| \leq 4N \} } \right\|_{\ell^q_s}
\gtrsim
N^{s+n/q}\left( 1 + \log \left( 1+N/2 \right) \right)^{1-1/q-\varepsilon}.
\end{equation*}

Collecting the estimates above, we have by the assumption $\ell^q_s \ast \ell^q_s \hookrightarrow \ell^q_s$
\begin{align*}
\| \{ a_{\cdot,N} \ast b_{\cdot,N} \}_{k \in \mathbb{Z}^n} \|_{\ell^q_s} 
\lesssim
\| a_{k,N} \|_{\ell^q_s} \cdot \| b_{k,N} \|_{\ell^q_s}
&\Longrightarrow
N^{s+n/q}\left( 1 + \log \left( 1+N/2 \right) \right)^{1-1/q-\varepsilon}
\lesssim
1 \cdot N^{s+n/q}
\\
&\Longleftrightarrow
\left( 1 + \log \left( 1+N/2 \right) \right)^{1-1/q-\varepsilon}
\lesssim
1.
\end{align*}
However, the last estimate fails when we choose a sufficiently large number $N>0$,
since $1 - 1/q -\varepsilon > 0$.
This contradicts to the assumption $\ell^q_s \ast \ell^q_s \hookrightarrow \ell^q_s$. 
Therefore, we obtain $s > n/q^\prime$.
\end{proof}

\begin{proof}[Proof of Proposition \ref{opt for multi mod}]
The ``IF'' part is given by Proposition \ref{alge mod}, and
the ``ONLY IF'' part is an immediate conclusion of 
Lemmas \ref{necessity for multi 1} and \ref{necessity for multi 2}.
\end{proof}

We also have a similar optimality for Fourier Lebesgue spaces:

\begin{proposition}
\label{opt for multi fou}
Let $1 < q \leq \infty$ and $s \in \mathbb{R}$.
Then, the Fourier Lebesgue space $\mathcal{F}L^q_s (\mathbb{R}^n)$ is a multiplication algebra 
if and only if the condition $s > n/q^\prime$ is satisfied.
\end{proposition}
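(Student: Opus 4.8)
The sufficiency is already in hand: for $1 < q \leq \infty$ and $s > n/q'$, Proposition \ref{alge fou} gives the algebra estimate, so the ``if'' direction needs no further work. The plan for the ``only if'' direction is to mirror the modulation-space argument: assuming $\mathcal{F}L^q_s(\R^n)$ is a multiplication algebra, I would first derive the discrete convolution embedding $\ell^q_s \ast \ell^q_s \hookrightarrow \ell^q_s$, and then invoke Lemma \ref{necessity for multi 2} to conclude $s > n/q'$. The advantage over the modulation case is that the $\mathcal{F}L^q_s$ norm is literally a weighted $L^q$ norm of the Fourier transform, so the passage to sequences is direct and no analogue of Lemma \ref{necessity for multi 1} has to be imported.

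The key step is a discretization by lattice-localized bumps on the Fourier side. Fix $\theta \in C^\infty_0(\R^n) \setminus \{0\}$ with $\supp \theta \subset \{ |\xi| \leq 1/8 \}$. Given two finitely supported sequences $\{a_k\}_{k \in \Z^n}$ and $\{b_k\}_{k \in \Z^n}$, I would define $f, g \in \calS(\R^n)$ by $\widehat f = \sum_k a_k \theta(\cdot - k)$ and $\widehat g = \sum_k b_k \theta(\cdot - k)$. Since the translates $\theta(\cdot - k)$ have pairwise disjoint supports and $\langle \xi \rangle^s \sim \langle k \rangle^s$ on $\supp \theta(\cdot - k)$, I get $\| f \|_{\mathcal{F}L^q_s} \sim \| a_k \|_{\ell^q_s}$ and likewise $\| g \|_{\mathcal{F}L^q_s} \sim \| b_k \|_{\ell^q_s}$, with constants depending only on $\theta$.

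On the product side, $\widehat{fg} = (2\pi)^{-n}\,\widehat f \ast \widehat g = (2\pi)^{-n} \sum_m (a \ast b)_m\, \Theta(\cdot - m)$, where $\Theta = \theta \ast \theta$ is supported in $\{ |\xi| \leq 1/4 \}$ and $(a\ast b)_m = \sum_{k} a_k b_{m-k}$ is the discrete convolution. The translates $\Theta(\cdot - m)$ are again pairwise disjointly supported with $\langle \xi \rangle^s \sim \langle m \rangle^s$ on each support, so $\| fg \|_{\mathcal{F}L^q_s} \sim \| (a\ast b)_m \|_{\ell^q_s}$. Feeding these three comparisons into the hypothesized algebra inequality $\| fg \|_{\mathcal{F}L^q_s} \lesssim \| f \|_{\mathcal{F}L^q_s} \| g \|_{\mathcal{F}L^q_s}$ yields $\| (a\ast b)_m \|_{\ell^q_s} \lesssim \| a_k \|_{\ell^q_s} \| b_k \|_{\ell^q_s}$ for all finitely supported sequences, which is exactly $\ell^q_s \ast \ell^q_s \hookrightarrow \ell^q_s$ after a routine density argument. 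Lemma \ref{necessity for multi 2} then forces $s > n/q'$, completing the necessity.

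The main obstacle will be the two norm-transfer comparisons $\| \sum_m c_m \Theta(\cdot - m)\|_{\mathcal{F}L^q_s} \sim \| c_m \|_{\ell^q_s}$ (and its $\theta$-analogue): these rest on the disjointness of the shifted supports together with the fact that the weight $\langle \cdot \rangle^s$ is essentially constant on each small bump, and one must treat the $q = \infty$ case (where $\ell^q_s$ is a supremum) in parallel. The remaining bookkeeping---the harmless normalization constant $(2\pi)^{-n}$ and the reduction from arbitrary sequences to finitely supported ones---is routine.
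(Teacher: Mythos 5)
Your proposal is correct, and it shares the paper's skeleton: sufficiency is exactly Proposition \ref{alge fou}, and necessity is reduced to the discrete embedding $\ell^q_s \ast \ell^q_s \hookrightarrow \ell^q_s$, after which Lemma \ref{necessity for multi 2} forces $s > n/q^\prime$. Where you genuinely diverge is the bridge from the algebra hypothesis to the sequence inequality. The paper does this by observing that the algebra property of $\mathcal{F}L^q_s$ is literally the weighted convolution inequality $\| \langle \cdot \rangle^s ( \widehat{f} \ast \widehat{g} )\|_{L^q} \lesssim \| \langle \cdot \rangle^s \widehat{f} \|_{L^q} \cdot \| \langle \cdot \rangle^s \widehat{g} \|_{L^q}$ and then quoting Lemma \ref{necessity for multi 3}, i.e.\ Proposition 4.1 of \cite{guo fan wu zhao 2017}, as a black box; you instead prove that transference directly. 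Your construction works: with $\widehat f = \sum_k a_k \theta(\cdot - k)$ and $\widehat g = \sum_k b_k \theta(\cdot - k)$, $\supp \theta \subset \{ |\xi| \leq 1/8 \}$, the translates $\theta(\cdot - k)$ are pairwise disjointly supported, $\langle \xi \rangle^s \sim \langle k \rangle^s$ on each bump by Peetre's inequality, and $\widehat{fg} = (2\pi)^{-n} \sum_m (a \ast b)_m \Theta(\cdot - m)$ with $\Theta = \theta \ast \theta$ supported in $\{ |\xi| \leq 1/4 \}$, so all three norm comparisons hold and the algebra inequality transfers to finitely supported sequences. That is in fact all Lemma \ref{necessity for multi 2} needs, since its test sequences $a_{k,N}$, $b_{k,N}$ are finitely supported and non-negative, so even the density step you mention can be skipped. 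Two details to record in a write-up: choose $\theta$ real and, say, non-negative (or argue via $\mathcal{F}^{-1}\Theta = (2\pi)^n (\mathcal{F}^{-1}\theta)^2 \not\equiv 0$) so that $\|\Theta\|_{L^q} > 0$, which is what makes the lower bound $\|fg\|_{\mathcal{F}L^q_s} \gtrsim \|(a\ast b)_m\|_{\ell^q_s}$ nondegenerate; and treat $q=\infty$ separately in the disjoint-support computations, as you anticipate. The trade-off between the two routes: yours makes the appendix self-contained by eliminating the citation of \cite[Proposition 4.1]{guo fan wu zhao 2017} at the cost of the explicit bump construction, while the paper's version is shorter and leans on a lemma stated there for general weighted convolution inequalities.
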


For the proof of Proposition \ref{opt for multi fou}, we use the following lemma instead of Lemma \ref{necessity for multi 1}:

\begin{lemma}
[{\cite[Proposition 4.1]{guo fan wu zhao 2017}}]
\label{necessity for multi 3}
Let $1 \leq q \leq \infty$ and $s \in \mathbb{R}$.
Then, if the estimate 
\begin{equation*}
\left\| \langle \cdot \rangle^s \left( f \ast g \right) \right\|_{L^q}
\lesssim
\left\| \langle \cdot \rangle^s f \right\|_{L^q}
\cdot
\left\| \langle \cdot \rangle^s g \right\|_{L^q}
\end{equation*} 
holds,
we have
$\ell^q_s \ast \ell^q_s \hookrightarrow \ell^q_s$.
\end{lemma}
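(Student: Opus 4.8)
The plan is to transfer the assumed continuous convolution inequality to the discrete one by realizing sequences as functions built from translated bumps with mutually disjoint supports, so that both the weighted $L^q$-norms and the convolution structure are reproduced faithfully. First I would fix a nonnegative $\phi \in C^\infty_0 (\mathbb{R}^n) \setminus \{0\}$ supported in the ball $\{ |x| \leq 1/8 \}$ and set $\Phi = \phi \ast \phi$, which is again nonnegative, supported in $\{ |x| \leq 1/4 \}$, and nonzero since $\int \Phi = ( \int \phi )^2 > 0$. For finitely supported sequences $a = \{ a_k \}$ and $b = \{ b_k \}$ on $\mathbb{Z}^n$, define
\begin{equation*}
f = \sum_{k \in \mathbb{Z}^n} a_k \, \phi ( \cdot - k ), \qquad g = \sum_{k \in \mathbb{Z}^n} b_k \, \phi ( \cdot - k ).
\end{equation*}
By the bilinearity of convolution, $f \ast g = \sum_{k \in \mathbb{Z}^n} ( a \ast b )_k \, \Phi ( \cdot - k )$, where $( a \ast b )_k = \sum_{\ell} a_\ell b_{k - \ell}$ is the discrete convolution. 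Since $f$, $g$, and $f \ast g$ are compactly supported, all weighted $L^q$-norms below are finite and the hypothesis may be applied.

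The key observation is that the translates $\phi ( \cdot - k )$ (respectively $\Phi ( \cdot - k )$) have pairwise disjoint supports, and on the support of the $k$-th term one has $\langle x \rangle \sim \langle k \rangle$ uniformly in $k$, because $| x - k | \leq 1/4$ there. Hence the weight is essentially constant on each block, and a direct computation using disjointness gives the two-sided comparisons
\begin{equation*}
\| \langle \cdot \rangle^s f \|_{L^q} \sim \| a \|_{\ell^q_s}, \qquad \| \langle \cdot \rangle^s g \|_{L^q} \sim \| b \|_{\ell^q_s}, \qquad \| \langle \cdot \rangle^s ( f \ast g ) \|_{L^q} \sim \| a \ast b \|_{\ell^q_s},
\end{equation*}
with the usual supremum modification when $q = \infty$ and with implicit constants depending only on $\phi$ and $n$. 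Feeding these into the assumed estimate $\| \langle \cdot \rangle^s ( f \ast g ) \|_{L^q} \lesssim \| \langle \cdot \rangle^s f \|_{L^q} \cdot \| \langle \cdot \rangle^s g \|_{L^q}$ yields
\begin{equation*}
\| a \ast b \|_{\ell^q_s} \lesssim \| a \|_{\ell^q_s} \cdot \| b \|_{\ell^q_s}
\end{equation*}
for all finitely supported $a, b$, with a constant independent of the sequences.

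Finally I would remove the finite-support restriction. For arbitrary $a, b \in \ell^q_s$, apply the bound to the truncations $a^{(M)}, b^{(M)}$ to $\{ |k| \leq M \}$; since $( a^{(M)} \ast b^{(M)} )_k \to ( a \ast b )_k$ entrywise as $M \to \infty$, Fatou's lemma (monotone passage for the $\ell^q_s$-norm) gives $\| a \ast b \|_{\ell^q_s} \lesssim \| a \|_{\ell^q_s} \cdot \| b \|_{\ell^q_s}$, which is precisely $\ell^q_s \ast \ell^q_s \hookrightarrow \ell^q_s$. I expect the only delicate point to be arranging that all three norm comparisons hold simultaneously in both directions: this is exactly what forces the small-support (disjointness) choice for $\phi$ and the resulting block-diagonal structure, and it is also what makes the $q = \infty$ case go through verbatim with the supremum replacing the sum.
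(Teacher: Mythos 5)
Your proof is correct. Note that the paper itself gives no argument for this lemma: it is quoted verbatim from Guo--Fan--Wu--Zhao \cite[Proposition 4.1]{guo fan wu zhao 2017}, so there is no internal proof to compare against; your transference argument (lattice translates of a small bump $\phi$, with $\Phi=\phi\ast\phi$ reproducing the discrete convolution, and disjoint supports making the weight $\langle x\rangle^s\sim\langle k\rangle^s$ block-constant) is the standard route and is essentially how the cited reference proceeds, with the advantage of being short and self-contained within this paper's notation. All the key points check out: the supports of $\phi(\cdot-k)$ (radius $1/8$) and of $\Phi(\cdot-m)$ (radius $1/4$) are pairwise disjoint over $\mathbb{Z}^n$, the comparison $\langle x\rangle^s\sim\langle k\rangle^s$ is uniform in $k$ for either sign of $s$, and $\|\Phi\|_{L^q}>0$ since $\Phi\ge 0$ with $\int\Phi>0$, so all three norm equivalences hold simultaneously, including the supremum version at $q=\infty$. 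One small point deserves a sentence in a final write-up: in the limiting step, entrywise convergence $(a^{(M)}\ast b^{(M)})_k\to(a\ast b)_k$ and the monotone/Fatou passage are only automatic for nonnegative sequences; for general complex $a,b\in\ell^q_s$ you should first apply the bound to $|a|,|b|$, which yields $\bigl\| |a|\ast|b| \bigr\|_{\ell^q_s}\lesssim\|a\|_{\ell^q_s}\|b\|_{\ell^q_s}$, and this simultaneously guarantees absolute convergence of each entry of $a\ast b$ and dominates $\|a\ast b\|_{\ell^q_s}$. With that one-line reduction the argument is complete.
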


\begin{proof}[Proof of Proposition \ref{opt for multi fou}]
The ``IF'' part is given by Proposition \ref{alge fou}.
The ``ONLY IF'' part is an immediate conclusion of 
Lemmas \ref{necessity for multi 2} and \ref{necessity for multi 3}
if we notice the equivalence
\begin{equation*}
\| f \cdot g \|_{ \mathcal{F} L^q_s } \lesssim \| f \|_{ \mathcal{F} L^q_s } \cdot \| g \|_{ \mathcal{F} L^q_s }
\Longleftrightarrow
\| \langle \cdot \rangle^s ( \widehat{f} \ast \widehat{g} )\|_{L^q}
\lesssim
\| \langle \cdot \rangle^s \widehat{f} \|_{L^q}
\cdot
\| \langle \cdot \rangle^s \widehat{g} \|_{L^q}.
\end{equation*}
\end{proof}

\section*{Acknowledgments}
The first author is supported by Grant-in-aid for JSPS Research Fellow (No.~17J00359).
The second author is partially supported by Grant-in-aid for Scientific Research from JSPS (No.~26287022 and No.~26610021).
 The third author is partially supported by Grant-in-aid for Scientific Research from JSPS (No.~16K05201).

\end{document}